\renewcommand{\Re}{\mathrm{Re}}
\newcommand{\ud}{\mathrm{d}}
\newcommand{\eps}{\epsilon}
\newcommand{\vp}{\varphi}
\newcommand{\bfg}{{\bf g}}
\newcommand{\calD}{\mathcal D}
\newcommand{\calK}{\mathcal K}
\definecolor{deepgreen}{cmyk}{0.1,0.7,0.8,0.1}
\newcommand{\E}{\mathcal{E}}
\newcommand{\HH}{\mathcal{H}}
\newcommand{\LL}{\mathcal{L}}
\newcommand{\CC}{\mathscr{C}}
\newcommand{\N}{\mathbb{N}}
\newcommand{\R}{\mathbb{R}}
\newcommand{\Sp}{\mathbb{S}}
\newcommand{\Z}{\mathbb{Z}}
\newcommand{\g}{\mathbf{g}}
\newcommand{\etab}{\boldsymbol{\eta}}
\newcommand{\al}{\alpha}
\newcommand{\be}{\beta}
\newcommand{\ga}{\gamma}
\newcommand{\de}{\delta}
\newcommand{\e}{\varepsilon}
\newcommand{\fy}{\varphi}
\newcommand{\om}{\omega}
\newcommand{\la}{\lambda}
\newcommand{\s}{\sigma}
\newcommand{\vphi}{\varphi}
\newcommand{\De}{\Delta}
\newcommand{\Om}{\Omega}
\newcommand{\La}{\Lambda}
\newcommand{\p}{\partial}
\newcommand{\na}{\nabla}
\newcommand{\rar}{\rightarrow}
\newcommand{\Rmnum}[1]{\expandafter\@slowromancap\romannumeral #1@}
\newcommand{\I}{\infty}
\newcommand{\ti}{\widetilde}
\newcommand{\LR}[1]{{\langle #1 \rangle}}
\newcommand{\ang}[1]{\left\langle{#1}\right\rangle}
\newcommand{\abs}[1]{\left\lvert{#1}\right\rvert}
\newcommand{\ant}[1]{\begin{align*}\begin{split} #1 \end{split}\end{align*}}
\newcommand{\EQ}[1]{\begin{equation}\begin{split} #1 \end{split}\end{equation}}
\newcommand{\Del}[1]{}
\numberwithin{equation}{section}
\newtheorem{thm}{Theorem}[section]
\newtheorem{cor}[thm]{Corollary}
\newtheorem{lem}[thm]{Lemma}
\newtheorem{prop}[thm]{Proposition}
\newtheorem{conj}{Conjecture}
\newtheorem{claim}[thm]{Claim}
\theoremstyle{remark}
\newtheorem{rem}{Remark}
\newcommand{\mfor}{{\ \ \text{for} \ \ }}
\newcommand{\mas}{{\ \ \text{as} \ \ }}
\newcommand{\Id}{\textrm{Id}}
\newcommand{\rad}{\mathrm{rad}}
\renewcommand\Re{\mathrm{Re}\,}
\newcommand{\cl}{\mathcal}
\newcommand{\rest}{\!\!\restriction}
\newcommand{\gs}{Q}
\begin{document}

\title[Equivariant Adkins--Nappi wave maps]{Conditional stable soliton resolution  for a  semi--linear Skyrme
	equation}

\author{Andrew Lawrie, Casey Rodriguez}

\begin{abstract}
We study a semi--linear version of the Skyrme system due to Adkins and Nappi.  The objects in this system are maps from $(1+3)$--dimensional  Minkowski space into the $3$--sphere and 1--forms on $\R^{1+3}$,  coupled via a Lagrangian action.  Under a co--rotational symmetry reduction  we establish the existence, uniqueness, and unconditional asymptotic stability of a family of stationary solutions $Q_n$,  indexed by the topological degree $n \in \N \cup \{0\}$ of the underlying map. We also prove that an arbitrarily large equivariant perturbation of $Q_n$ leads to a globally defined solution that scatters to $Q_n$ in infinite time as long as the critical norm for the solution remains bounded on the maximal interval of existence given by the local Cauchy theory. We remark that the evolution equations are super--critical with respect to the conserved energy. 
\end{abstract}

%\thanks{$^1$Support of the National Science Foundation (NSF), DMS-1302782 for the first author is gratefully acknowledged. }

\maketitle

%%%%%%%%%%%%%%%%%%%%%%%%%%%%%%%%%%%%%%%%%%%%%%%%%%%%%%%%%%%%%%%%%
%%%%%%%%%%%%%%%%%%%%%%%%%%%%%%%%%%%%%%%%%%%%%%%%%%%%%%%%%%%%%%%%%

\section{Introduction}

%%%%%%%%%%%%%%%%%%%%%%%%%%%%%%%%%%%%%%%%%%%%%%%%%%%%%%%%%%%%%%%%%
%%%%%%%%%%%%%%%%%%%%%%%%%%%%%%%%%%%%%%%%%%%%%%%%%%%%%%%%%%%%%%%%%

The goal of this paper is two--fold. First, we will establish the existence, uniqueness, and asymptotic stability of topologically nontrivial stationary solutions for a semi--linear (and co--rotational) version of the Skyrme model introduced in the physics community by Adkins and Nappi~\cite{AN, Skyrme}. Then we will establish stable soliton resolution for this equation, conditional on a certain non--conserved norm remaining bounded over the course of the evolution. We remark that this paper is a natural continuation of~\cite{L15}, which established the same results in the class of topologically trivial maps for the same equation.  Before stating our mains results we first briefly motivate and then introduce the model under consideration.

The Skyrme system and the simplified Adkins--Nappi system considered here are modifications of the $O(3)$ nonlinear $\sigma$-model from particle physics. In the mathematics community, the $O(3)$ nonlinear $\s$-model is called the wave maps equation for maps  $U:(\R^{1+3}, \etab) \to (\Sp^3, \bfg)$, where $\etab$ is the Minkowski metric on $\R^{1+3}$ and $\g$ is the round metric on the $3$-sphere, $\Sp^3$.  A wave map is  a formal critical point of the Lagrangian action 
\EQ{
	\LL(U) = \frac{1}{2} \int_{\R^{1+3}} \etab^{\al \be} \ang{ \p_{\al} U, \p_{\be} U}_{\g} \, d x\, d t.
} 
The Euler--Lagrange equations for  $\LL$ are  
\EQ{ \label{wmi}
	\etab^{\mu \nu} D_{\mu} \p_{\nu} U = 0,
}
where $D$ is the pull-back covariant derivative on the pull--back bundle $U^*T\Sp^3$.   Wave maps conserve the energy, 
\EQ{ \label{en} 
	\E(U, \p_t U)(t) = \frac{1}{2} \int_{\R^3} (\abs{\p_t U}_{\g}^2 + \abs{\nabla U}_{\g}^2) \, dx = \E(U, \p_t U)(0)
}
and also exhibit the following scaling  invariance: for any $\la>0$  
\EQ{
	\vec U(t,x) :=(U(t, x), \p_tU(t, x)) \mapsto  \vec U_{\la}(t,x) := (U(t/ \la, x/ \la), \la^{-1} \p_tU(t/\la, x/ \la)). 
} 
The $O(3)$ wave maps equation  is energy super--critical since one can reduce the energy by concentrating the solution to a point via a rescaling, i.e., sending $\la \to 0$ above, we have 
\EQ{
	\E(\vec U_{\la})  = \la \E(\vec U) \to 0 \mas \la \to 0 
}
making it energetically favorable for the solution to concentrate. One thus expects smooth finite energy initial data to lead to finite--time blow up.  This is evident in the co--rotational formulation of the wave maps equation,  where  the system of equations for $\vec U$ reduce to an equation for $\psi$, the azimuth angle measured from the north pole (in spherical coordinates on $\Sp^3$). That is, \eqref{wmi} reduces to  % and the system of equations for $U$ reduce to an equation for $\psi$, 
\EQ{
	\psi_{tt}- \psi_{rr} - \frac{2}{r} \psi_r  +  \frac{\sin 2\psi}{r^2}  = 0, \quad \vec \psi(0) = (\psi_0, \psi_1)
}
and the conserved energy~\eqref{en}  becomes 
\ant{
	\E(\vec \psi) = \frac{1}{2} \int_0^{\infty} \left(\psi_t^2 + \psi_r^2 + \frac{2\sin^2 \psi}{r^2} \right) \, r^2 \, \ud r.
}
Shatah~\cite{Shatah} proved that such wave maps can form a singularity in finite time. This was made explicit by Turok and Spergel~\cite{TS} who found a closed form  example of self--similar blow--up namely, 
\EQ{ \label{arctan} 
	\psi(t, r)  = 2 \arctan(r/t), 
}
which exhibits derivative blow--up at $t =0$. This singularity formation makes the $O(3)$ wave maps equation unappealing as a physical model. Another significant deficiency  is the absence of finite energy, nontrivial, stationary solutions, harmonic maps in this case, which are often referred to in the physics literature as \emph{topological solitons}~\cite{MS}.

The physics community has come up with several alternatives to the wave maps equation designed to remedy these deficiencies, i.e.,  to remove the possibility of finite--time blow up and to introduce topological solitons. Perhaps the most famous modification is due to Skyrme~\cite{Skyrme}. The Skyrme Lagrangian includes higher order terms to the wave maps Lagrangian that break the scaling symmetry of the resulting system but also give rise to a quasilinear system of equations,  which are difficult to analyze from a dynamical perspective. While it's still unknown if blow up is possible in the full Skyrme equations, the system is known to possess finite energy stationary solutions, called \emph{Skyrmions}. Indeed, the existence of co--rotational Skyrmions was established rigorously by McLeod and Troy~\cite{McT}.

There have been several advances in understanding the dynamical properties of the Skyme system in recent years. For example, the asymptotic stability of Skyrmions was addressed numerically in~\cite{BCR}, and their \emph{linear} stability was established rigorously in~\cite{CDSS}.  Global existence and scattering for initial data that is small in the space $(\dot{B}^{\frac{5}{2}}\times \dot{B}^{\frac{3}{2}}) \cap (\dot{H}^1 \times L^2)$ was proved in~\cite{GNR}, and global existence for large smooth initial data was established in the preprint~\cite{Li}.  

Much stronger results are conjectured in the literature. For example,  \cite{BCR} conjectures that any smooth finite energy, topological degree $n$ initial data leads to a global solution which relaxes (by radiating off its excess energy) to a degree $n$ Skyrmion as $t \to \pm \infty$. In other words, Skyrmions are believed to be globally asymptotically stable in the energy space, i.e., stable soliton resolution holds. The full conjecture presents many significant challenges starting with the fact that the equations, while not scaling invariant, should still be viewed as \emph{super--critical with respect to the conserved energy}. 

The difficulties presented by the Skyrme model lead one to consider even simpler modifications of the wave maps model that retain some of the interesting features of Skyrme, i.e., modifications that remove the scaling instability and that have topological solitons. In~\cite{KLS}, the first author,  Kenig and Schlag, considered a model introduced in~\cite{Biz}, namely  $(1+3)$-dimensional wave maps exterior to the unit ball  taking values in $\Sp^3$. There, it was shown that any finite energy co--rotational data leads to a global and smooth solution which scatters to the unique harmonic map in its degree class as $t \to \pm \infty$, giving the first example of \emph{stable soliton resolution} for a non--integrable equation.   This extended~\cite{LS}, which established the result in the case of topologically trivial initial data and was later followed by~\cite{KLLS1, KLLS2} which extended~\cite{KLS} to data in all higher equivariance classes. Recently the second author expanded on the methods introduced in~\cite{KLS} to prove stable soliton resolution for equivariant wave maps posed on a curved wormhole geometry~\cite{CR16b, CR16c}.  These latter results also used crucially a variant of the linear theory introduced in~\cite{SSS1, SSS2}. The crucial common element in these works is that singularity formation is prevented by removing the only place in the domain where a radially symmetric map can form one, i.e, $r =0$. We also remark that the results described in this paragraph were all inspired by the concentration compactness ideology stemming from the work of Bahouri and G\'erard~\cite{BG} and Kenig and Merle~\cite{KM06, KM08} and the channels of energy technique introduced in Duyckaerts, Kenig, and Merle~\cite{DKM1, DKM2, DKM3, DKM4, DKM5}.

In this paper we consider a different semi--linear modification of the Skyrme system introduced by Adkins and Nappi~\cite{AN} in the mid $1980$'s. Before stating the main results, we provide a brief introduction to the Adkins-Nappi formulation. We refer the reader to the recent works~\cite{GNR, GR10b, GR10a} for excellent introductions including  physical motivation.

\subsection{Adkins Nappi  Maps}
The objects we consider here are maps  $U: (\R^{1+3}, \etab) \to (\Sp^3, \g)$,  where $\etab$ is the Minkowski metric on $\R^{1+3}$ and $\g$ is the usual metric on $\Sp^3$, and $1$--forms (or gauge potentials) $A = A_{\al} dx^{\al}$ over $\R^{1+3}$.  Adkins-Nappi wave maps are a coupled system for the pair $(U, A)$  defined by the requirement that $(U, A)$ are formal critical points for the Lagrangian 
\EQ{ \label{eq:ANL}
	\LL(U, A) &=  \frac{ 1}{2}\int_{R^{1+3}} \etab^{\al \be} \ang{ \p_{\al} U, \p_{\be} U}_\g \, dx\, dt  + \frac{1}{4} \int_{\R^{1+3}} F_{\al \be} F^{\al \be} \, dt \, dx \\
	& \quad - \int_{\R^{1+3}} A_{\al} j^{\al} \, \ud t \, \ud x,
}
where $F_{\al \be} := \p_{\al}A_{\be}- \p_{\be} A_{\al}$ is  the curvature (or Faraday tensor) associated to $A$, and $j$ (called the baryonic current) is given by  
\begin{align*}
j^{\al} = c \eps^{\al \be \ga \de}(\etab) \p_{\be} U^i \p_{\ga} U^j \p_{\de} U^{k} \epsilon_{i j k}(\g).
\end{align*}
Here $c>0$ is a  normalizing constant and $\eps$ is the Levi-Civita symbol,  
\begin{align*}
&\eps_{ijk}(\g) = \sqrt{\abs{\g}}  [ i, j, k],\quad 
\eps^{\al \be \ga \de}(\etab) = \frac{1}{ \sqrt{\abs{ \etab}} }[ \al, \be, \ga, \de]. 
\end{align*}
We remark that ~\eqref{eq:ANL} generalizes the Lagrangian for wave maps from $\R^{1+3}$ taking values in $\Sp^3$ (the case $c = 0$). Following \cite{GNR, GR10b, GR10a, L15}, we consider a restricted class of maps $U$ and $1$--forms $A$ by making an equivariance assumption. Let $(t, r, \om)$  
be polar coordinates on $\R^{1+3}$ with metric $\etab = - dt^2 + dr^2 + r^2 d \om^2$, where $d \om^2$ is the standard metric on $\Sp^2$.  
And let $(\psi, \Om)$ denote spherical coordinates on $\Sp^3$ with metric $\g = d \psi^2 + \sin^2 \psi d \Om^2$ where $d \Om^2$ is the standard metric on $\Sp^2$.  
We consider only co--rotational, or  $1$--equivariant maps, with $\psi = \psi(t, r)$ and $\Om = \om$ so that  $U = U(t, r, \om)$ is given by 
\ant{
	U(t, r, \om) =  ( \psi(t, r), \om). 
}
We also require that the $1$-form $A$ takes the simple form   
\EQ{ \label{eq:AV}
	A(t, r, \om) = ( V(t, r), 0, 0, 0).
}
for some real-valued function $V = V(t, r)$. It follows that 
\ant{
	&\etab^{\al \be} \g_{ij}(U) \p_\al U^i \p_\be U^j = - \psi_t^2 + \psi_r^2 + 2 r^{-2} \sin ^2 \psi, \quad F_{\al \be} F^{\al \be} = - 2 V_{r}^2,\\
	&A_{\al} j^{\al} = V j^0  = 6c V \sin^2 \psi \psi_r r^{-2} = 3c r^{-2}V \p_r( \psi- \frac{1}{2}\sin2 \psi).
}
Under these restrictions on $(U, A)$ the Lagrangian action reduces to 
\ant{ 
	\frac{1}{ \pi} \LL(\psi, V) &= \frac{1}{2}\int_{\R}\int_0^{\infty}  \left( -\psi_t^2 + \psi_r^2 + 2\frac{ \sin^2 \psi}{r^2} \right) \, r^2 \, \ud r \, \ud t\\
	& \quad- \frac{1}{2} \int_{\R} \int_0^{\infty} V_r^2 \, r^2 \, \ud r  \, \ud t- 3c \int_{\R}\int_0^{\infty} V_r ( \psi- \sin \psi \cos \psi) \,\ud r \, \ud t, 
}
which can be  conveniently rewritten as 
\ant{
	\frac{1}{ \pi}   \LL( \psi, V) &= \frac{1}{2} \int_{\R}\int_0^{\infty}  \left( -\psi_t^2 + \psi_r^2 + 2\frac{ \sin^2 \psi}{r^2}  + \al^2 \frac{ (\psi- \sin \psi \cos \psi)^2}{r^4}\right) \, r^2 \ud r \, \ud t \\
	&\quad -  \frac{1}{2}\int_{\R} \int_0^{\infty} \left( r V_r +  \al  \frac{ ( \psi- \sin \psi \cos \psi)}{r} \right)^2\, \ud r \, \ud t, 
}
where the constant $\al$ is defined as $\al = 3c$.  If $( \psi, V)$ is a critical point for $\LL$ then  
\ant{
	0=\frac{d}{d \e} \vert_{\e=0} \LL( \psi, V+ \e W) = -C\int_{ \R} \int_0^{\infty}  \p_r\left(r^2V_r + \al ( \psi- \sin \psi \cos \psi) \right) W\, \ud r \, \ud t.
}
From this we can deduce that 
\EQ{ \label{eq:Veqn}
	r^2V_r + \al  (\psi- \sin \psi \cos \psi) = 0
}
for any critical  point $(\psi, V)$. 
This leads to a decoupling of the Euler-Lagrange equations for $\psi$ and $V$. After rescaling the coordinates~$(t, r) \mapsto ( \al t, \al r)$, we use \eqref{eq:Veqn} to obtain the  equation for $\psi$, which we formulate with Cauchy data, 
\EQ{\label{eq:an}
	&\psi_{tt} -\psi_{rr} -\frac{2}{r} \psi_r + \frac{\sin(2 \psi)}{r^2} + \frac{( \psi - \sin\psi \cos\psi)(1-\cos2 \psi)}{r^4} = 0,\\
	& \vec \psi(0):= (\psi(0), \psi_t(0)) = ( \psi_0, \psi_1).
}
The system has a coercive conserved energy, or Hamiltonian, given by 
\ant{
	\E( \vec \psi(t))= \frac{1}{2} \int_0^{\infty} \left( \psi_t^2 + \psi_r^2 + \frac{2 \sin^2 \psi}{r^2} + \frac{(\psi- \sin\psi \cos\psi)^2}{r^4} \right)\, r^2 \, \ud r.
}
Note that finite energy and continuity of the flow  ensure that for each $t \in I$, where $I \ni 0$ is a time interval on which the solution exists, one has 
\ant{
	\psi(t, 0) = 0, \quad \lim_{r \to \infty} \psi(t, r) =  n \pi
}
for some fixed $ n \in \Z$. We'll refer to $n$ as the \emph{degree} of the map and denote by $\E_{ n}$ the set of all finite energy data of degree $n$, 
\ant{
	\E_n := \{ (\psi_0, \psi_1) \mid \, \E (\vec \psi) <\infty, \, \, \psi_0(0)=0, \, \, \psi_0( \infty) = n \pi\}.
}
We remark that~\eqref{eq:an} is locally well--posed for smooth initial data in~$\E_n$. 

In Section~\ref{s:elliptic} we'll prove that for each $n$,  \emph{ there exists a unique stationary solution} $\vec Q_n = (Q_n, 0) \in \E_n$ to~\eqref{eq:an}; see Theorem~\ref{t01}. The point of this paper is to illustrate the important role that the $Q_n$ play in the dynamics of solutions to~\eqref{eq:an}. In particular, the soliton resolution conjecture for this equation is as follows. 

\begin{conj}[Soliton Resolution] Denote by $\vec Q_n = (Q_n, 0)$ the unique finite enegy stationary  solution to~\eqref{eq:an} in $\E_n$ and  let $(\psi_0, \psi_1) \in \E_n$ be  any smooth compactly supported initial data for~\eqref{eq:an}.  Then, the unique solution $\vec \psi(t)$ to~\eqref{eq:an} with initial data $\vec \psi(0) = (\psi_0, \psi_1)$ is defined globally in time. Moreover, there exists a solution $\vec \fy_L(t)$ to the linear equation 
	\EQ{ \label{eq:linw} 
		\fy_{tt} - \fy_{rr} - \frac{2}{r} \fy_r + \frac{2}{r^2} \fy  = 0
	}
	so that 
	\EQ{
		\vec \psi(t) = \vec Q_n  + \vec \fy_L (t) + o_{\dot{H}^1 \times L^2}(1) \mas t \to \infty.
	}
\end{conj}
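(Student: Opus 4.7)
The plan is to combine three main ingredients: (i) an a priori global regularity theorem that prevents finite-time blow-up for smooth compactly supported data in $\E_n$, (ii) a profile decomposition in the equivariant energy space along arbitrary time sequences $t_n\to\infty$, and (iii) a channels-of-energy/rigidity argument that identifies the only possible weak limits with $\vec Q_n$ plus a free radiation term. The overall template is the concentration--compactness paradigm of Bahouri--G\'erard~\cite{BG} and Kenig--Merle~\cite{KM06,KM08}, combined with the channels-of-energy mechanism of Duyckaerts--Kenig--Merle~\cite{DKM1,DKM2,DKM3,DKM4,DKM5}, as adapted to wave-map-type problems in~\cite{KLS,KLLS1,KLLS2,CR16b,CR16c} and as employed, in its topologically trivial version, in~\cite{L15}.

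The first step is to prove that any smooth compactly supported datum in $\E_n$ launches a global solution with \emph{a priori} bounded critical $\dot H^1\times L^2$ norm. The only obvious coercive quantity is the Adkins--Nappi Hamiltonian, which, crucially, contains the extra quartic well $(\psi-\sin\psi\cos\psi)^2/r^4$ that is absent in the ordinary $O(3)$ wave map. I would attempt to combine a virial/Morawetz identity on light cones with this extra potential to exclude self-similar concentration: a rescaling of the Shatah--Turok--Spergel profile $\psi(t,r)=2\arctan(r/t)$ produces an unbounded contribution to the quartic term, and an ODE analysis in the spirit of Section~\ref{s:elliptic} should rule out any co-rotational self-similar finite-energy blow-up profile. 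Once self-similar and Type-I concentration are excluded, Type-II concentration is ruled out by a channel-of-energy argument near the singular time. Global classical regularity then follows from standard persistence.

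Granted global existence, Step 2 extracts the asymptotic profile. Applying an equivariant profile decomposition to $\{\vec\psi(t_n)\}$ produces a family of nonlinear profiles at various scales. Because every nontrivial finite-energy equivariant stationary solution is, up to rescaling, one of the $Q_m$ (Theorem~\ref{t01}), every non-radiative profile is of soliton type; the topological boundary conditions $\psi(t,0)=0$ and $\psi(t,\infty)=n\pi$ are preserved by the flow, constraining the total topological charge carried by the profiles to equal $n$. Step 3 then invokes the exterior energy lower bound for the free radial linear wave~\eqref{eq:linw} on $\mathbb R^{1+3}$ to propagate outgoing radiation past each profile scale. The absence of threshold resonances for the linearization at $Q_n$ (which underlies the unconditional asymptotic stability statement of the paper) yields a spectral gap that forbids a compact nonscattering trajectory distinct from $\vec Q_n$ itself; this is the rigidity theorem that reduces the profile expansion to a single soliton plus a free wave and completes the soliton resolution.

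The dominant obstacle is Step 1: super-criticality of~\eqref{eq:an} with respect to the conserved energy means there is no known coercive control of any scaling-invariant norm, and every subsequent step silently depends on having global-in-time solutions with a priori bounded critical norm. Ruling out finite-time concentration in a genuinely super-critical model is a notorious open problem; short of a complete proof of global regularity, the strongest result one can realistically extract from the above scheme is the conditional one actually proved in this paper, in which the hypothesis ``the critical $\dot H^1\times L^2$ norm stays bounded on the maximal interval of existence'' replaces Step 1. A full unconditional resolution of the conjecture presumably requires either a new monotone quantity tailored to the Adkins--Nappi quartic term, or a direct classification of self-similar and Type-II concentration scenarios using the ODE methods of Section~\ref{s:elliptic} together with rigidity of the $Q_n$.
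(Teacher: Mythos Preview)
The statement you are addressing is a \emph{conjecture}; the paper does not prove it and explicitly presents only the conditional version, Theorem~\ref{t:main}(b). So there is no ``paper's own proof'' to compare against, and you have, to your credit, correctly identified at the end of your proposal that Step~1 is the genuine open problem and that without it one is left with exactly the conditional theorem the paper actually establishes.

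That said, there is a real confusion in your write-up that is worth correcting. You repeatedly refer to controlling ``the critical $\dot H^1\times L^2$ norm'', but $\dot H^1\times L^2$ is the \emph{energy} norm for~\eqref{eq:an}: it is automatically bounded for all time by conservation of $\E$, so there is nothing to prove there. The scaling-critical norm for this problem is $\dot H^2\times\dot H^1$ (equivalently, after the $5d$ reduction $\psi=Q_n+ru$, the norm $\HH$ of~\eqref{eq:HHdef}), reflecting the quintic top-order nonlinearity; this is exactly the quantity appearing in the hypothesis~\eqref{eq:Hnap} of the conditional result. The paper emphasizes this point in the paragraph preceding Theorem~\ref{t:main} and in Remark~1. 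Your Step~1 therefore needs to accomplish two separate things, not one: (i) global existence of smooth solutions, and (ii) an a~priori bound on $\|\vec\psi(t)-\vec Q_n\|_{\dot H^2\times\dot H^1}$, neither of which follows from the conserved energy in a supercritical problem. Ruling out self-similar blow-up via the quartic well, as you sketch, would at best address (i); it says nothing about growth of the $\dot H^2$ norm on a global solution, and the concentration-compactness machinery you invoke in Steps~2--3 genuinely requires that higher-order bound to produce the profile decomposition and to run the rigidity argument.

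A secondary point: the conjecture is stated for arbitrary finite-energy data in $\E_n$ with convergence in $\dot H^1\times L^2$, whereas the paper's entire well-posedness and scattering framework (Proposition~\ref{p32}, Proposition~\ref{p33}, and the profile decomposition of Section~\ref{s:ccr}) lives in the smaller space $\HH_n\subset\E_n$. Bridging that gap---running the argument at energy regularity only---would require a separate low-regularity theory that neither the paper nor your outline supplies.
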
 

In this paper we prove two partial versions of the conjecture. First, we'll prove that the conjecture is true as long as the initial data in $\E_n$ is close enough to $Q_n$ in a suitable sense. Then, we'll verify the conjecture under the additional assumption that the difference between the nonlinear evolution $\vec \psi(t)$ and $\vec Q_n$ remains uniformly bounded in a sense to be described below. In order to formulate these results precisely,  we first introduce a norm in which to measure proximity to $Q_n$ that  reflects the energy super--critical nature of the Cauchy problem~\eqref{eq:an}. Indeed, given $\vec \psi = (\psi_0, \psi_1) \in \E_n$ we define the notation
\EQ{ 
	\| \vec \psi \|_{\HH_n}^2 := \| ( \psi_0 - Q_n, \psi_1) \|_{( \dot{H}^2 \times \dot{H}^1) \cap (\dot H^1 \times L^2)(\R^3)}^2
} 
and for $n =0$ we have the norm:
\EQ{
	\| \vec \psi \|_{\HH_0}^2 := \| ( \psi_0, \psi_1) \|_{( \dot{H}^2 \times \dot{H}^1) \cap (\dot H^1 \times L^2)(\R^3)}^2.
}
We define $\HH_n \subset \E_n$ as the set 
\EQ{
	\HH_n:= \{ (\psi_0, \psi_1) \in \E_n \, | \,   \|(\psi_0, \psi_1) \|_{\HH_n} < \infty \}.
}
The inclusion of the $\dot{H}^2 \times \dot{H}^1$-norm in these definitions is required to formulate the small data well-posedness  and scattering theory for~\eqref{eq:an} for topologically trivial initial data, see e.g.,~\cite{GNR, L15}, and will also arise in our proof of the asymptotic stability of the non--trivial $Q_n$. It reflects the critical regularity of the underlying scale invariant system and we refer to~\cite{GNR, L15} and Section~\ref{s:SD} for more on this point. 

Before stating the main theorem we refer briefly to Proposition~\ref{p32} below which yields the following: given initial data $(\psi_0, \psi_1) \in \HH_n$ there exists a unique solution $\vec \psi(t) \in \HH_n$ with data $\vec \psi(0)$ defined on a maximal interval of existence $I_{\max}(\vec \psi(0))$. In particular, for each $t \in I_{\max}(\vec \psi(0))$ we can define $\vec \fy(t) \in \HH_0$ by 
\EQ{
	\vec \fy(t):=  \vec \psi(t) - (Q_n, 0).
}

\begin{thm}[Main Theorem] \label{t:main} 
	Let $(\psi_0, \psi_1) \in \HH_n$ for some $n \in \N$ and denote by~$\vec \psi(t)$ the solution to~\eqref{eq:an} with data $\vec \psi(0) = (\psi_0, \psi_1)$  defined on its maximal interval of existence $I_{\max} = I_{\max}(\vec \psi(0))$. The following hold true: 
	\begin{itemize} 
		\item[(a)] For all $n \in \N$, $\vec Q_n=(Q_n, 0)$ is asymptotically stable in $\HH_n$. To be precise: there exists $\eps_0>0$ small enough so that for all data $\vec \psi(0) \in \HH_n$ with 
		\EQ{
			\| \vec \psi(0) \|_{\HH_n}  < \eps_0
		}
		we have $I_{\max}( \vec \psi(0)) = \R$, i.e., the corresponding solution $\psi(t)\in \HH_n$ is defined globally in time. Moreover, $\vec \psi(t)$ scatters to $\vec Q_n$. That is, there exists linear waves $\vec \fy_L^{\pm}$ solving~\eqref{eq:linw} so that 
		\EQ{
			\vec \psi(t) = \vec Q_n  + \vec \fy_L^{\pm} (t) + o_{\HH_0}(1) \mas t \to \pm \infty,
		}
		or in other words,  
		\EQ{
			\|\vec \psi(t) - \vec Q_n  - \vec \fy_L^{\pm} (t) \|_{\HH_0} \to 0 \mas t \to \pm \infty.
		}
		\item[(b)] For all $n \in \N$, large data soliton resolution holds in $\HH_n$ in the following conditional sense. Let $\vec \psi(t)$ be as above and for each $t \in I_{\max}(\vec \psi(0)) := (-T_-, T_+)$ set 
		\EQ{
			\vec \fy(t):= \vec \psi(t) - \vec Q_n
		}
		Suppose that there exists a constant $C< \infty$ for which 
		\EQ{ \label{eq:Hnap} 
			\sup_{t \in [0, T_+)} \| \vec \fy(t) \|_{\HH_0} \le C < \infty
		}
		Then $T_+= + \infty$, i.e., $\vec \psi(t)$ is defined globally in forward time. Moreover, $\vec \psi(t)$ scatters to $Q_n$ in forward time. That is, there exists a  linear wave $\vec \fy_L^+$ solving~\eqref{eq:linw} so that 
		\EQ{
			\vec \psi(t) = \vec Q_n  + \vec \fy_L^{+} (t) + o_{\HH_0}(1) \mas t \to \infty,
		}
		or, in other words, 
		\EQ{
			\|\vec \psi(t) - \vec Q_n  - \vec \fy_L^+ (t) \|_{\HH_0} \to 0 \mas t \to \infty. 
		}
		A similar statement holds in the negative time direction as well. 
	\end{itemize} 
\end{thm}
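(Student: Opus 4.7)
The plan is to deduce part (a) from a small-data well-posedness analysis around the linearization at $\vec Q_n$, and then to use (a) together with a Kenig--Merle concentration-compactness / rigidity scheme to establish (b).

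For part (a), write $\vec{\psi}(t) = \vec{Q}_n + \vec{\fy}(t)$. Since $Q_n$ is stationary, $\fy$ satisfies
\[ \fy_{tt} + L_n \fy = \NN_n(\fy), \]
where $L_n := -\p_r^2 - (2/r)\p_r + V_n(r)$ is the linearization of the nonlinear terms of~\eqref{eq:an} at $Q_n$ and $\NN_n(\fy)$ collects the higher-order (quadratic and up) contributions in $\fy$. From the explicit form of~\eqref{eq:an} one checks that $V_n$ is continuous with $V_n(r) = O(r^{-2})$ near $0$ and $V_n(r) \to 2/r^2$ with integrable remainder at infinity, so $L_n$ is a short-range perturbation of the free radial $3$-dimensional operator appearing in~\eqref{eq:linw}. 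The first step is a spectral / distorted-Fourier analysis of $L_n$: using the variational character of $Q_n$, one shows that $L_n$ has no negative spectrum, no zero eigenvalue, and no threshold resonance. Note that~\eqref{eq:an} has no scaling symmetry, so no scaling-generated zero mode is present. A Jost-function construction in the spirit of~\cite{SSS1, SSS2} then produces Strichartz and pointwise dispersive estimates for $e^{it\sqrt{L_n}}$ matching those of the free operator. A contraction fixed-point in a Strichartz-type norm controlling $\HH_0$ closes the argument; the nonlinear estimates on $\NN_n(\fy)$ follow from minor modifications of the trivial-background analysis in~\cite{GNR, L15}, since the new linear terms from $Q_n$ are absorbed into $V_n$ and the remaining source terms are at least quadratic in $\fy$.

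For part (b), the strategy is the Kenig--Merle concentration-compactness / rigidity roadmap~\cite{KM06, KM08} as adapted to wave-map-type systems in~\cite{KLS, KLLS1, KLLS2, CR16b, CR16c}. Assume for contradiction that scattering fails while $\sup_{t \in [0, T_+)} \|\vec{\fy}(t)\|_{\HH_0} \le C$. A nonlinear perturbation lemma---essentially a corollary of the distorted Strichartz theory from part (a)---combined with a linear profile decomposition for $\p_t^2 + L_n$ (where the co-rotational symmetry eliminates spatial translation profiles, so only scaling profiles need be tracked) yields a minimal non-scattering solution $\vec{\fy}_*$ whose forward trajectory is precompact in $\HH_0$ modulo a continuous scaling parameter $\lambda(t) > 0$. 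The main obstacle is the rigidity step, namely showing that no such non-zero critical element exists. The intended approach combines (i) a channels-of-energy exterior estimate \`a la Duyckaerts--Kenig--Merle~\cite{DKM1, DKM4, DKM5} for the linearized flow $\p_t^2 + L_n$, which rules out trajectories that fail to radiate energy into exterior light cones unless their data lies in a finite-dimensional subspace related to the broken scaling generator; with (ii) a virial/Tao-type identity for the full nonlinear system controlling the dynamics of $\lambda(t)$, ruling out self-similar concentration and escape to either boundary of scale. Together these force $\vec{\fy}_* \equiv 0$, a contradiction. The hardest technical obstacle is step (i): the quartic Adkins--Nappi correction $(\psi - \sin\psi\cos\psi)^2/r^4$ in the Hamiltonian produces delicate low-order contributions to $V_n$ whose zero-energy Jost solutions must be computed explicitly in order to identify any channel-of-energy obstructions and to confirm they are orthogonal to the critical element's orbit.
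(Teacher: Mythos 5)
Your proposal diverges from the paper in several substantive ways, and some of the deviations in part~(b) point to genuine gaps that would be difficult to repair.

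For part~(a), the paper does not linearize around $Q_n$ in the $3d$ azimuth variable and then treat $L_n$ directly. Instead it substitutes $\psi = Q_n + ru$ and passes to a $5d$ radial semilinear wave equation $u_{tt}-\Delta_{\R^5}u + V(r)u + Z(r,u)=0$ with a potential satisfying $|V(r)|\lesssim\langle r\rangle^{-6}$. This absorbs the repulsive $2/r^2$ term into the higher-dimensional Laplacian, so the relevant Schr\"odinger operator $H_V = -\Delta_{\R^5}+V$ is a genuinely short-range perturbation. You propose staying in $3d$ with $L_n = -\p_r^2 - (2/r)\p_r + V_n$ where $V_n \to 2/r^2$ at infinity; the long-range $2/r^2$ tail makes a Jost/distorted-Fourier analysis and the resulting Strichartz theory considerably more delicate, and the paper avoids it. Also, the paper's spectral input (no negative spectrum, no zero eigenvalue or resonance of $H_V$) is proved by a Sturm comparison against a deeper potential well, using $\Phi = r^2 Q_n'$ as a threshold eigenvector and relying crucially on the monotonicity $Q_n'>0$ from Theorem~\ref{t01} — not by computing Jost solutions.

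For part~(b) the gaps are more serious. First, the paper's critical element has a trajectory that is precompact in $\HH$ with \emph{no} modulation parameter (Proposition~\ref{p:cc}); introducing a scaling parameter $\lambda(t)$ is not what happens here, because $Q_n$ breaks scaling and the equation is not scale invariant. Second, the paper explicitly avoids virial/Morawetz-type identities — the rigidity argument is a channels-of-energy argument built on exterior energy estimates for the \emph{free} radial wave equation in $\R^{1+5}$ (Proposition~\ref{linear prop}, from~\cite{KLS}), not for the linearized flow $\p_t^2 + L_n$; the potential and nonlinearity are handled via a truncated exterior Cauchy problem (Lemma~\ref{h small data}) and finite speed of propagation, which makes the exterior evolution subcritical. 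Third, and most importantly, you omit the step where the nontrivial spatial limit $\ell_0 \neq 0$ is ruled out: the paper shows that $\ell_0 \neq 0$ would force the compact solution $\psi = Q_n + ru$ to coincide with a second distinct finite-energy stationary solution, directly contradicting the uniqueness of $Q_n$ proved in Section~2 (Lemma~\ref{u1 imp}). This use of uniqueness is one of the paper's key structural ideas, and it is entirely absent from your scheme. Without it, the case $\ell_0 \neq 0$ is not closed, and the rigidity argument is incomplete. Computing zero-energy Jost solutions of $V_n$ to ``identify channel-of-energy obstructions,'' as you suggest, is not the right direction — the obstruction is handled at the level of the free wave via the explicit projections $\pi_a, \pi_a^\perp$ onto the Newton-potential plane $\{(c_1 r^{-3}, c_2 r^{-3})\}$.
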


A few remarks are in order: 
\begin{rem}
	We emphasize that Theorem~\ref{t:main}(b) is a conditional result because it is not known if ~\eqref{eq:Hnap} holds for every solution.  Indeed, it is not clear that higher Sobolev norms such as the $\dot{H}^2 \times \dot{H}^1$ norm of $\vec \psi - \vec Q_n$ should  remain uniformly bounded over the course of the evolution. \end{rem} 

\begin{rem}\label{r:scat}
	We also note that Theorem~\ref{t:main} in the case $n=0$ was proved in~\cite{L15}. The work~\cite{L15} falls into a category of conditional large data scattering results for energy sub--critical and super--critical wave equations in recent years; see e.g., ~\cite{KM10, KM11a, KM11b, DKM5, KV10, KV11b,  L15, DL1, DL2, DR15, CR16a, DY17} and reference therein. In all of these works, one begins with the assumption that the scaling critical Sobolev norm of the solution stays uniformly bounded over the maximal interval if existence for the solution. Indeed, this information is a crucial ingredient in the implementation of the Kenig--Merle concentration compactness/ridigity method, which is used in each of the works cited above. We single out the recent works \cite{DR15} and \cite{DY17} from the list given above, as they do not assume a uniform bound on the critical norm, but rather only that a critical norm remains bounded on a single sequence of times tending to the maximal time of existence. 
\end{rem}

\begin{rem}
	What distinguishes Theorem~\ref{t:main} from the references listed in Remark~\ref{r:scat} is that here we prove that solutions (conditionally) asymptotically relax to a \emph{nontrivial} stationary solution $\vec Q_n$ as opposed to relaxing to the vacuum solution $\vec 0$, i.e., scattering to zero.  To the authors knowledge the present work is the first result in a super--critical setting that establishes conditional large data scattering  to a truly nonlinear object,  $Q_n$.   
\end{rem}

\subsection{Outline of the paper} 

In Section 2, we establish the existence and uniqueness of the stationary solution $Q_n$.  The existence of $Q_n$ follows easily by a variational argument applied to the static energy functional. However, proving $Q_n$ is the unique stationary solution with finite energy is much more subtle.  We use a comparison argument to reduce the proof of uniqueness to showing the following monotonicity property of stationary solutions to \eqref{eq:an}: every finite energy stationary solution $\varphi(r)$ to \eqref{eq:an} satisfies $\varphi'(r) > 0$ $\forall r \geq 0$.  This monotonicity property is proved via a series of simple calculus lemmas along with a comparison argument applied on intervals where $\varphi'(r) \neq 0$.  

In Section 3, we reformulate our main result in terms of the variable $u$ which is related to the original azimuth angle $\psi$ via $\psi(t,r) = Q_n(r) + r u(t,r)$ (see the statement of Theorem \ref{t31}).  This substitution implies that $u$ satisfies a semi--linear equation on $\R^{1+5}$ of the form 
\begin{align*}
u_{tt} - u_{rr} - \frac{4}{r} u_r + V(r) u + \sum_{p = 2}^5 Z_p(r,ru) u^p = 0,
\end{align*}
where the potential $V$ and functions $Z_p$ are smooth and bounded (with $V$ having fast decay).  The energy super--critical nature of the problem is reflected in the fact that the top order nonlinearity in the equation is quintic.  We then prove Strichartz estimates for radial solutions to the linear equation $w_{tt} - \Delta w + V w = F$ on $\R^{1+5}$ which are essential in studying the nonlinear evolution for $u$.  To do this, we first prove linear stability of the radial Schr\"odinger operator $-\Delta + V$ on $\R^5$ using Sturm--Liouville theory (see Theorem \ref{t:lin} for the precise statement of linear stability and Strichartz estimates).  The monotonicity property $Q_n' > 0$ proved in Section 2 is also essential in this argument.  Strichartz estimates then follow from linear stability by using (now standard) arguments which are sketched in Appendix A.   

The proof of our main result begins in earnest in Section 4.  We use the Strichartz estimates developed in Section 3 and standard contraction mapping arguments to prove local well--posedness and small data scattering for \eqref{eq:an} in the equivalent $u$--formulation (see Proposition \ref{p32}).  The latter result is equivalent to the asymptotic stability of $Q_n$ in the original azimuth angle $\psi$ (Theorem \ref{t:main}(a)).  We then prove a long--time stability result which is a technical tool used in the proof of Theorem \ref{t:main}(b) in the final section.  

In Section 5, we prove our conditional stable soliton resolution result, Theorem \ref{t:main}(b), via the Kenig--Merle concentration compactness/rigidity method.  This method has three main steps which we now briefly describe in the azimuth angle $\psi$ (our implementation, however, is in the $u$ variable).  For the first step we show that solutions to \eqref{eq:an} that evolve from small perturbations of $\vec Q_n$ satisfy the conclusions of Theorem \ref{t:main}(b): global existence and scattering to $\vec Q_n$.  This is precisely Theorem \ref{t:main}(a) which was proved in Section 4.  In the second step we show that if the conclusions of Theorem \ref{t:main}(b) are false, then there exists a \emph{critical element}: a minimal solution $\vec \psi_\infty(t)$ to \eqref{eq:an} that fails to scatter to $\vec Q_n$ in both time directions.  Moreover $\vec \psi_\infty(t)$ has the property that the trajectory 
\begin{align*}
\cl K = \{ \vec \psi_\infty(t) \, | \, t \in \R \} \subset \cl H_n 
\end{align*}        
is precompact in $\cl H_n$ (see Proposition \ref{p:cc} for the statement of this step in terms of $u$).  This step follows from the results proved in Section 4 and general concentration compactness arguments which are completely analogous to the degree $n = 0$ argument in \cite{L15}.  Section 5 begins with a brief sketch of this step.  The final step of the Kenig--Merle method (which occupies the bulk of Section 5) is to prove the following rigidity theorem: if $\vec \psi(t)$ is a solution to \eqref{eq:an} with the property that $\cl K = \{ \vec \psi(t) \, | \, t \in \R \} \subset \cl H_n$ is precompact, then $\vec \psi = \vec Q_n$.  This rigidity property contradicts the second step, thus proving that the conclusions of Theorem \ref{t:main}(b) must be true.  The proof of the rigidity result follows the techniques used in \cite{L15} which were introduced in~\cite{KLS}. This type of argument, based on the ``channels of energy" method introduced in the seminal works~\cite{DKM1, DKM2, DKM3} and especially in~\cite{DKM4, DKM5}, avoids using any dynamical identities or inequalities, such as virial or Morawetz, which are in general poorly suited to the complicated nonlinearities in geometric equations.  The method used here relies on exterior energy estimates for the underlying free radial wave equation proved in~\cite{KLS} (see Proposition~\ref{linear prop}) and the uniqueness of $\vec Q_n$ as the only finite energy stationary solution to \eqref{eq:an} proved in Section 2. 

%%%%%%%%%%%%%%%%%%%%%%%%%%%%%%%%%%%%%%%%%%%%%%%%%%%%%%%%%%%%%%%%%%%%%%%%%
%%%%%%%%%%%%%%%%%%%%%%%%%%%%%%%%%%%%%%%%%%%%%%%%%%%%%%%%%%%%%%%%%%%%%%%%%
%%%%%%%%%%%%%%%%%%%%%%%%%%%%%%%%%%%%%%%%%%%%%%%%%%%%%%%%%%%%%%%%%%%%%%%%%

\section{Stationary Adkins--Nappi  Maps}

%%%%%%%%%%%%%%%%%%%%%%%%%%%%%%%%%%%%%%%%%%%%%%%%%%%%%%%%%%%%%%%%%%%%%%%%%
%%%%%%%%%%%%%%%%%%%%%%%%%%%%%%%%%%%%%%%%%%%%%%%%%%%%%%%%%%%%%%%%%%%%%%%%%

\label{s:elliptic}

For the remainder of the paper, we fix a topological degree $n \in \N$.  
In this section, we will establish the existence, uniqueness, and properties of a stationary solution to~\eqref{eq:an}, that is, a solution to the elliptic equation 
\begin{align}
\begin{split}\label{e00}
&\vp_{rr} + \frac{2}{r} \vp_r = \frac{\sin 2\vp}{r^2} + \frac{(\vp - \sin \vp \cos \vp)(1 - \cos 2 \vp)}{r^4}, \quad r > 0, \\
&\vp(0) = 0, \quad \vp(\infty) = n\pi. 
\end{split}
\end{align}
More precisely, we prove the following. 

\begin{thm}[Existence and Uniqueness of Stationary Solutions to~\eqref{eq:an}]\label{t01}
	There exists a unique solution $Q_n$ to \eqref{e00} which we will refer to as the stationary Adkins--Nappi map of degree $n$.  Moreover, $Q'_n(r) > 0$ for all $r \geq 0$ 
	and there exist unique $\al_n, \beta_n > 0$ such that 
	\begin{align*}
	Q_n(r) &= n\pi - \al_n r^{-2} + O(r^{-6}) \quad \mbox{as } r \rar \infty, \\
	Q_n(r) &= \beta_n r + O(r^{3}) \quad \mbox{as } r \rar 0
	\end{align*}
	where the $O(\cdot)$ terms satisfy the natural derivative bounds. 
\end{thm}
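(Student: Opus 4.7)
The plan is to split the proof into four conceptually distinct pieces: (i) existence via the direct method applied to the static energy; (ii) local asymptotic expansions of any finite-energy solution at $r=0$ and $r=\infty$; (iii) strict monotonicity $\varphi'(r)>0$ for every finite-energy stationary solution $\varphi$; and (iv) uniqueness, obtained from the combination of (ii) and (iii) through a sliding/comparison argument. I expect the monotonicity step (iii) to be the main obstacle.

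For (i), I would minimize the static energy
\begin{equation*}
E(\varphi) = \frac{1}{2} \int_0^\infty \left( \varphi_r^2 + \frac{2 \sin^2 \varphi}{r^2} + \frac{(\varphi - \sin \varphi \cos \varphi)^2}{r^4} \right) r^2 \, \ud r
\end{equation*}
over the admissible class of finite-energy degree-$n$ profiles. Nonemptiness is witnessed by a smooth cutoff of the linear interpolant between $0$ and $n\pi$. Along a minimizing sequence, the kinetic term gives weak $H^1_{\mathrm{loc}}$ compactness, the term $\int \sin^2 \varphi \, \ud r$ forces $\varphi$ to cluster near multiples of $\pi$ at infinity, and the quartically weighted term $\int (\varphi - \sin\varphi\cos\varphi)^2 r^{-2} \, \ud r$ prevents any loss of degree at $r = 0$ (since the integrand grows like $\varphi^6/r^2$) and simultaneously pins down the specific integer $n$ at infinity. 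Weak lower semicontinuity yields a minimizer, and the Euler--Lagrange equation is exactly~\eqref{e00}; ODE regularity upgrades it to a smooth classical solution.

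For (ii), both the origin and infinity are regular singular points of the linearization of \eqref{e00}. At $r = 0$, dropping higher-order nonlinear terms gives $\varphi_{rr} + \tfrac{2}{r}\varphi_r - \tfrac{2}{r^2}\varphi = 0$, with indicial exponents $+1$ and $-2$; finite energy selects the branch $\varphi \sim \beta_n r$, and a contraction/Frobenius iteration produces $Q_n(r) = \beta_n r + O(r^3)$ with $\beta_n>0$ forced by $Q_n \to n\pi > 0$. At $r = \infty$, set $\psi = n\pi - \varphi$ and Taylor expand to obtain $\psi_{rr} + \tfrac{2}{r}\psi_r - \tfrac{2}{r^2}\psi = -\tfrac{2 n\pi}{r^4}\psi^2 + \mathrm{h.o.t.}$, whose decaying homogeneous solution is $\alpha_n r^{-2}$; a standard weighted contraction argument then yields $Q_n(r) = n\pi - \alpha_n r^{-2} + O(r^{-6})$. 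Differentiating the ODE bootstraps the matching derivative bounds.

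The heart of the matter is (iii). Rewriting \eqref{e00} in divergence form,
\begin{equation*}
(r^2 \varphi')' = \sin 2\varphi + \frac{2\,(\varphi - \sin \varphi \cos \varphi) \sin^2 \varphi}{r^2},
\end{equation*}
one sees that the right-hand side has a favorable sign on the strips $\varphi \in (k\pi, k\pi + \tfrac{\pi}{2})$ but an ambiguous sign on $(k\pi + \tfrac{\pi}{2}, (k+1)\pi)$, so monotonicity is not a one-line consequence of the ODE. My plan is to argue indirectly: assume some finite-energy solution $\varphi$ has an interior zero $r_0$ of $\varphi'$, and let $J$ be a maximal interval adjacent to $r_0$ on which $\varphi'$ has a definite sign. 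On $J$, I would compare $\varphi$ with carefully chosen sub- and supersolutions built out of $\sin 2\varphi$ and $f(\varphi) := \varphi - \sin \varphi \cos \varphi$ (whose monotonicity and sign properties are elementary) to show that $\varphi$ is trapped in a strip incompatible with the boundary condition $\varphi(\infty) = n\pi$ and finite energy. The hard part will be organizing these case distinctions cleanly across the strips $(k\pi, (k+1)\pi)$ for $0 \leq k \leq n-1$; this is exactly the ``series of simple calculus lemmas along with a comparison argument'' advertised in the introduction.

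Finally, for (iv), monotonicity makes each finite-energy solution a strictly increasing diffeomorphism $[0,\infty) \to [0, n\pi)$ with the rigid asymptotic profiles from (ii). Given two such solutions $\varphi_1, \varphi_2$, I would slide the family $\varphi_1^\lambda(r) := \varphi_1(\lambda r)$: for $\lambda$ small the graph of $\varphi_1^\lambda$ lies above that of $\varphi_2$ on every compact set, and for $\lambda$ large the reverse holds. Decreasing $\lambda$ from above $1$ until the first contact point, one uses the ODE and ODE uniqueness at regular points to conclude $\varphi_1^\lambda \equiv \varphi_2$ at the critical $\lambda^\ast$; the asymptotic expansions from (ii), which fix the scale through $\alpha_n$ and $\beta_n$, then force $\lambda^\ast = 1$ and hence $\varphi_1 \equiv \varphi_2$.
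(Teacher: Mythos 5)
Your steps (i) and (ii) are sound and match the paper's strategy: direct minimization of the static energy for existence, and a Frobenius/iteration analysis of the regular singular points at $r=0,\infty$ for the asymptotic expansions (the paper does the latter after passing to $x=\log r$ and comparing with the damped pendulum, but the indicial exponents $\{1,-2\}$ you identify are the same). The problems are in (iii) and (iv).

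For (iii), your plan to trap $\varphi$ in a "strip incompatible with $\varphi(\infty)=n\pi$" via ad hoc sub/supersolutions is not the mechanism the argument needs, and I do not see how to make it work: nothing forces a solution with a critical point to stay within a single width-$\pi$ strip, and the sign of the right-hand side of the divergence-form ODE is, as you note yourself, ambiguous on half of each strip. The actual argument has two concrete ingredients you haven't identified. First, the monotone quantity $W(r) := r^4\varphi'(r)^2 - 2r^2\sin^2\varphi - (\varphi-\sin\varphi\cos\varphi)^2$ satisfies $W'(r) = -4r\sin^2\varphi \leq 0$ with $W(0)=0$, $W(\infty)=-n^2\pi^2$; this pins $0<\varphi<n\pi$ and excludes the first class of critical-point configurations. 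Second, at a critical point with $\varphi'(r_0)=\varphi''(r_0)=0$ one computes $\varphi'''(r_0)<0$, which shows critical points are isolated and hence finite, and then one rules them out by a comparison of $p=d\varphi/dx$ ($x=\log r$) viewed as a function of $\varphi$ on two sub-intervals where $\varphi'>0$ — essentially the same phase-plane lemma used for uniqueness.

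For (iv), your sliding argument contains a genuine error: equation \eqref{e00} is not scale-invariant (the $r^{-4}$ term scales differently from the $r^{-2}$ term), so $\varphi_1^\lambda(r)=\varphi_1(\lambda r)$ is \emph{not} a solution of \eqref{e00} for $\lambda\neq 1$; it solves a modified equation with the quartic term multiplied by $\lambda^{-2}$. Therefore "ODE uniqueness at regular points" cannot be invoked at the first contact $\lambda^\ast$ to conclude $\varphi_1^{\lambda^\ast}\equiv\varphi_2$. A sliding argument could conceivably be repaired by a strong-maximum-principle contradiction at the touching point, but that is not what you wrote, and the direction of the sub/supersolution inequality and the boundary behavior at $r\to 0,\infty$ would all need careful verification. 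The paper's actual uniqueness proof is quite different: in the phase plane $(\varphi,p)$ it derives an integrated comparison identity showing that if two monotone solutions have $\alpha_2>\alpha_1$ in the expansion at $\infty$, then $p_2>p_1$ and $x_2>x_1$ propagate all the way down to $\varphi\to 0^+$, forcing $\beta_1>\beta_2$, which in turn forces $p_1>p_2$ near $\varphi=0$, a contradiction. In short, the ordering of the near-infinity coefficients and the near-origin coefficients are coupled with opposite signs, and that incompatibility — not scaling — is what drives uniqueness.
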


\subsection{Existence of Stationary Adkins--Nappi maps} 

We first establish existence by minimizing the static energy functional.  
In what follows, we denote the \emph{static Adkins--Nappi energy} by
\begin{align*}
\cl J(\varphi) = \frac{1}{2} \int_0^\infty \left [ 
(\varphi')^2 + \frac{2 \sin^2 \varphi}{r^2} + \frac{(\varphi - \sin \varphi \cos \varphi)^2}{r^4}
\right ] r^2 dr,
\end{align*}
and the \emph{local Adkins--Nappi energy} by
\begin{align*}
\cl J^{\rho_1}_{\rho_0} (\varphi) = \frac{1}{2} \int_{\rho_0}^{\rho_1} \left [ 
(\varphi')^2 + \frac{2 \sin^2 \varphi}{r^2} + \frac{(\varphi - \sin \varphi \cos \varphi)^2}{r^4}
\right ] r^2 dr.
\end{align*}
We define 
\begin{align*}
X := \Bigl \{ \varphi :\:\: &\varphi \in C([0,\infty)), \quad \varphi \in AC(J) \quad \forall J \Subset (0,\infty), \\
&\varphi(0) = 0, \varphi(\infty) = n\pi, \quad \cl J(\varphi) < \infty \Bigr \}  
\end{align*}
and 
\begin{align*}
\cl J_m := \inf_{\vphi \in X} \cl J(\varphi) 
\end{align*}
which exists since $\cl J(\cdot)$ is nonnegative.

The main goal of this subsection is to prove that $\cl J_m$ is attained in $X$.  

\begin{prop}\label{p01}%%%%%%%%%%%%%%%%%
	There exists $\vphi \in X$ such that $\cl J_m = \cl J(\vphi)$.  
\end{prop}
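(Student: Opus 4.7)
My plan is to apply the direct method of the calculus of variations. First, I would take a minimizing sequence $\{\vphi_k\}_{k \in \N} \subset X$ with $\cl J(\vphi_k) \to \cl J_m$. The a priori bound $\int_0^\infty (\vphi_k')^2 r^2\, dr \leq 2 \cl J(\vphi_k)$ shows that the radial extensions $f_k(x) := \vphi_k(|x|)$ are uniformly bounded in $\dot H^1(\R^3)$, and applying the radial Sobolev inequality to $\vphi_k - n\pi$ (using $\vphi_k(\infty) = n\pi$) yields the uniform pointwise decay $|\vphi_k(r) - n\pi| \lec r^{-1/2}$ for all $r > 0$. Combined with the trivial estimate $\int_\e^R (\vphi_k')^2\,dr \leq \e^{-2}\int_0^\infty (\vphi_k')^2 r^2\, dr$, this produces a uniform $H^1([\e, R])$ bound on the minimizing sequence for every $0 < \e < R < \infty$. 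A standard diagonal argument based on the compact embedding $H^1([\e, R]) \hookrightarrow C([\e, R])$ then yields, after passing to a subsequence (not relabeled), a limit $\vphi : (0, \infty) \to \R$ with $\vphi_k \to \vphi$ uniformly on every compact subset of $(0, \infty)$ and $\vphi_k' \rightharpoonup \vphi'$ weakly in $L^2((0,\infty), r^2\, dr)$.

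Next, I would show $\cl J(\vphi) \leq \cl J_m$. The quadratic gradient term $\frac{1}{2}\int_0^\infty (\vphi')^2 r^2\, dr$ is weakly lower semicontinuous under $\vphi_k' \rightharpoonup \vphi'$, while the remaining two nonnegative integrands $\sin^2(\vphi_k)$ and $(\vphi_k - \sin\vphi_k\cos\vphi_k)^2/r^2$ converge pointwise on $(0,\infty)$ to the corresponding integrands for $\vphi$. Fatou's lemma applied to the latter two terms then gives $\cl J(\vphi) \leq \liminf_k \cl J(\vphi_k) = \cl J_m$.

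It remains to verify $\vphi \in X$. The uniform decay $|\vphi_k(r) - n\pi| \lec r^{-1/2}$ passes to the pointwise limit to give $|\vphi(r) - n\pi| \lec r^{-1/2}$, and therefore $\vphi(r) \to n\pi$ as $r \to \infty$; in particular the topological degree is preserved. For the behavior at the origin, the finiteness of $\cl J(\vphi)$ forces $\int_0^1 g(\vphi(r))^2 r^{-2}\, dr < \infty$, where $g(\theta) := \theta - \sin\theta\cos\theta$ vanishes only at $\theta = 0$ while $g(m\pi) = m\pi \neq 0$ for every nonzero integer $m$. Combined with the continuity of $\vphi$ on $(0,\infty)$, this singular weight forces $\vphi$ to extend continuously to $r = 0$ with $\vphi(0) = 0$, completing the verification that $\vphi \in X$ and $\cl J(\vphi) = \cl J_m$.

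The main obstacle will be the preservation of the two boundary conditions, each of which is topological in character and hence not automatically retained by a weak $\dot H^1$-limit. The decisive ingredients will be, respectively, the quantitative decay supplied by the radial Sobolev embedding in $\R^3$, which prevents energy from escaping to infinity and losing degree, and the strong penalization of nonzero values of $\vphi$ near the origin provided by the $r^{-4}$ factor in the energy density, which rules out any nonzero limit at $r = 0$. Together these ensure that the weak limit $\vphi$ remains in the topological class $X$.
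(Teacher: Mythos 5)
Your overall strategy (direct method, local compactness, weak lower semicontinuity/Fatou, uniform decay at infinity, forcing the correct boundary value at the origin via the $r^{-4}$ term) matches the paper's. The compactness and lower semicontinuity steps, and the argument at $r=\infty$, are fine and essentially identical to the paper's. The gap is in the last step.

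You assert that the finiteness of $\int_0^1 g(\vphi(r))^2\, r^{-2}\,dr$ together with continuity of $\vphi$ on $(0,\infty)$ ``forces $\vphi$ to extend continuously to $r=0$ with $\vphi(0)=0$.'' This single constraint does \emph{not} yield a limit at the origin: it rules out $\vphi$ being bounded away from $0$ near the origin (giving $\liminf_{r\to 0}|\vphi(r)|=0$), but it does not preclude oscillation with $\limsup_{r\to 0}|\vphi(r)|>0$, since spikes of height $c$ and width $\de\ll a^2$ centered at $r=a$ contribute only $O(\de a^{-2})$ to that integral. What actually forbids such oscillation is the \emph{interaction} of that term with the gradient term: a spike of height $c$ and width $\de$ near $r=a$ costs $\ageq a^2 c^2/\de$ in $\int(\vphi')^2 r^2\,dr$, and the two costs cannot both be summable if the heights $c_k$ do not tend to $0$. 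Your argument never invokes the gradient term here.

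The paper closes exactly this gap, and more cleanly, by introducing $G(\rho)=\int_0^\rho(\theta-\sin\theta\cos\theta)\,d\theta=\tfrac12(\rho^2-\sin^2\rho)$ and observing via the fundamental theorem of calculus and Cauchy--Schwarz that
\begin{equation}
\bigl|G(\vphi(r_1))-G(\vphi(r_0))\bigr|
=\left|\int_{r_0}^{r_1}\vphi'\,(\vphi-\sin\vphi\cos\vphi)\,d\rho\right|
\le 2\,\cl J_{r_0}^{r_1}(\vphi),
\end{equation}
which is a Cauchy criterion showing that $G(\vphi(r))$ has a limit as $r\to 0$. The paper also normalizes the minimizing sequence by $\vphi_k\ge 0$ (using $\cl J(|\vphi_k|)=\cl J(\vphi_k)$, which you omit) so that $G$ is invertible on the range of $\vphi$; it then concludes $\vphi(r)$ has a limit $\ell\ge 0$, and your final observation that $\ell\neq 0$ would make $\cl J(\vphi)=\infty$ finishes the argument. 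To repair your proof, either adopt the $G$-trick, or supply the quantitative argument pairing the $r^{-2}$-weighted term with the gradient term; the assertion as written does not stand on its own.
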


Since \eqref{e00} is the Euler--Lagrange equation associated to the functional $\cl J(\cdot)$, an immediate corollary of Proposition \ref{p01} is the existence statement made in Theorem \ref{t01}.  

\begin{cor}\label{c02}
	There exists a smooth stationary Adkins--Nappi map $Q_n$. 
\end{cor}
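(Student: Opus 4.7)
The plan is to apply the direct method of the calculus of variations. The functional $\cl J$ is a sum of three nonnegative terms, so the main ingredients are: (i) uniform local $H^1$-type control along a minimizing sequence; (ii) diagonal extraction of a candidate minimizer with locally uniform convergence and weak $L^2_{\mathrm{loc}}$ convergence of derivatives; and (iii) verification that the candidate lies in $X$ (preserving the boundary conditions $\vphi(0)=0$, $\vphi(\infty)=n\pi$), followed by lower semicontinuity.

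First I would verify $\cl J_m<\infty$ by exhibiting an explicit admissible profile, e.g.\ a smooth $\vphi_*\in X$ equal to $\beta r$ near the origin and equal to $n\pi$ for large $r$. The fact that $g(\vphi):=\vphi-\sin\vphi\cos\vphi = O(\vphi^3)$ near zero makes the Adkins--Nappi integrand $g(\vphi)^2/r^4$ integrable at the origin despite the $r^{-4}$ weight, and the asymptotic values handle integrability at infinity. Given a minimizing sequence $\{\vphi_k\}\subset X$ with $\cl J(\vphi_k)\to\cl J_m$, the kinetic term produces the local gradient bound
\begin{equation*}
\int_a^b (\vphi_k')^2\,dr \le a^{-2}\int_a^b (\vphi_k')^2 r^2\,dr \le 2a^{-2}\cl J(\vphi_k)
\end{equation*}
for any $0<a<b<\infty$. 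Combined with the Cauchy--Schwarz decay estimate
\begin{equation*}
|\vphi_k(r)-n\pi| \le \bigg(\int_r^\infty (\vphi_k')^2 s^2\,ds\bigg)^{1/2}\bigg(\int_r^\infty s^{-2}\,ds\bigg)^{1/2} \le C\,r^{-1/2}\sqrt{\cl J(\vphi_k)},
\end{equation*}
valid for all $r>0$ uniformly in $k$, this bounds $\vphi_k$ pointwise on $[a,b]$. Rellich--Kondrachov and a diagonal extraction over an exhaustion of $(0,\infty)$ produce a subsequence, still denoted $\vphi_k$, converging to some $\vphi\in C((0,\infty))$ uniformly on compacts, with $\vphi_k'\rightharpoonup\vphi'$ weakly in $L^2_{\mathrm{loc}}((0,\infty),r^2\,dr)$.

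Passing the decay estimate to the limit gives $|\vphi(r)-n\pi|\le Cr^{-1/2}$ for large $r$, so $\vphi(\infty)=n\pi$. For the boundary condition at the origin the analogous Cauchy--Schwarz fails (since $\int_0^r s^{-2}\,ds=\infty$), and here I would exploit the Adkins--Nappi integrand. By Fatou's lemma, $\int_0^\infty r^{-2}g(\vphi)^2\,dr \le \liminf_k \int_0^\infty r^{-2}g(\vphi_k)^2\,dr \le 2\cl J_m$. Since $g$ is strictly increasing on $\R$ and vanishes exactly at integer multiples of $\pi$ (with $g(j\pi)=j\pi$), any limit $\lim_{r\to 0^+}\vphi(r)\neq 0$ would force $g(\vphi)$ to be bounded away from zero on a right neighborhood of $0$, contradicting the integrability just displayed; a similar argument combining this integrability with the weak lower semicontinuity of the kinetic term rules out oscillatory behaviour near $r=0$. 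Hence $\vphi\in C([0,\infty))$ with $\vphi(0)=0$, so $\vphi\in X$. Lower semicontinuity then follows: Fatou's lemma handles the two nonnegative integrands (thanks to locally uniform convergence on compacts of $(0,\infty)$) and weak lower semicontinuity of the $L^2$-norm in the weight $r^2\,dr$ handles the kinetic term, yielding $\cl J(\vphi)\le\liminf\cl J(\vphi_k)=\cl J_m$. Since $\vphi\in X$ this forces equality.

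The main obstacle I anticipate is the preservation of $\vphi(0)=0$ in the limit. The at-infinity boundary condition is essentially free from Cauchy--Schwarz applied to the kinetic term, but at the origin the kinetic term alone provides no pointwise control. One genuinely needs the Adkins--Nappi potential $g(\vphi)^2/r^4$, which penalizes any nontrivial asymptotic value at $r=0$; this is the only step where the specific nonlinearity in \eqref{e00}, rather than the bare $O(3)$ wave-map energy, plays a decisive role.
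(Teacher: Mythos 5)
Your strategy matches the paper's: the corollary is obtained from the variational Proposition~\ref{p01} (existence of a minimizer of $\cl J$ in $X$) together with the observation that \eqref{e00} is the Euler--Lagrange equation of $\cl J$, and you reprove the variational step by the direct method with essentially the same compactness and lower-semicontinuity ingredients. (You never mention the Euler--Lagrange/regularity step converting the minimizer into a smooth solution, but neither does the paper, which treats Corollary~\ref{c02} as immediate from Proposition~\ref{p01}.)

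There is, however, a genuine gap at exactly the point you flag as the crux: establishing $\vphi(0)=0$. Your Fatou bound $\int_0^\epsilon r^{-2}g(\vphi)^2\,dr<\infty$ does rule out a \emph{nonzero} limit of $\vphi$ at the origin, but it does not by itself show that $\lim_{r\to0^+}\vphi(r)$ exists --- an oscillating $\vphi$ can keep $r^{-2}g(\vphi)^2$ integrable by being large only on sparse sets. You defer this to ``a similar argument combining this integrability with the weak lower semicontinuity of the kinetic term,'' but that is a pointer, not an argument. The paper supplies the missing mechanism: set $G(\rho)=\int_0^\rho g=\tfrac12(\rho^2-\sin^2\rho)$ and apply Cauchy--Schwarz pairing the kinetic and Adkins--Nappi integrands,
\[
\bigl|G(\vphi(r_1))-G(\vphi(r_0))\bigr|=\Bigl|\int_{r_0}^{r_1} g(\vphi)\,\vphi'\,d\rho\Bigr|
\le\Bigl(\int_{r_0}^{r_1}(\vphi')^2\rho^2\,d\rho\Bigr)^{1/2}\Bigl(\int_{r_0}^{r_1}g(\vphi)^2\rho^{-2}\,d\rho\Bigr)^{1/2}\le 2\,\cl J_{r_0}^{r_1}(\vphi)\longrightarrow0
\]
as $r_0,r_1\to0$ since these are tails of the finite integral $\cl J(\vphi)$; hence $G(\vphi(r))$ is Cauchy and converges. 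To invert $G$ and conclude that $\vphi$ itself converges one needs $\vphi\ge0$ (as $G$ is even, it is a homeomorphism only when restricted to $[0,\infty)$), which the paper arranges at the outset by replacing $\vphi_k$ with $|\vphi_k|$. Your proposal omits both this normalization and the $G$-function estimate, so the oscillation case is not actually closed. A smaller point: your parenthetical claim that $g$ ``vanishes exactly at integer multiples of $\pi$ (with $g(j\pi)=j\pi$)'' contradicts itself --- indeed $g(j\pi)=j\pi\ne0$ for $j\ne0$, so $g$ vanishes only at $\rho=0$; your conclusion is unaffected, but the statement should be corrected.
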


\begin{proof}[Proof of Proposition \ref{p01}]
	
	Let $(\vphi_k)$ be a minimizing sequence for $\cl J_m$. Since $$\cl J(|\varphi_k|) = \cl J(\varphi_k)$$ we may assume that 
	$\varphi_k \geq 0$ on $[0,\infty)$ for all $k$.  We first show that the sequence is pointwise bounded uniformly in $k$.  
	
	\begin{claim}\label{l01}%%%%%%%%%%%%%%%%%%%%%%%%%%
		There exists $C > 0$ independent of $k$ such that 
		\begin{align}\label{e01}%%%%%%%%%%%%%%%%%%%%%%%%%
		|\varphi_k(r)| \leq C, \quad \forall r > 0.
		\end{align}
	\end{claim}
	
	\begin{proof}[Proof of Claim \ref{l01}]
		We introduce 
		\begin{align*}
		G(\rho) := \int_0^\rho (\theta - \sin \theta \cos \theta) d\theta = \frac{1}{2} ( \rho^2 - \sin^2 \rho ). 
		\end{align*}
		Note that $G(0) = 0$, $G$ is even and increasing on $[0,\infty)$, and $G(\rho) \rar \infty$ as $\rho \rar \infty$.  Then,
		for each $k$, by the fundamental theorem of calculus
		\begin{align*}
		G(\varphi_k(r)) &= G(\varphi_k(r)) - G(\varphi_k(0)) \\
		&= \int_0^r \frac{d}{d\rho} G(\varphi_k(\rho)) d\rho \\
		&= \int_0^r \vphi_k' (\vphi_k - \sin \vphi_k \cos \vphi_k) d\rho \\
		&\leq \left ( \int_0^r (\vphi_k')^2 \rho^2 d\rho \right )^{1/2} 
		\left ( \int_0^r (\vphi_k - \cos \vphi_k \sin \vphi_k )^2 \rho^{-2} d\rho \right )^{1/2} \\
		&\leq 2 \cl J(\vphi_k) \leq M.  
		\end{align*}
		This proves the claim. 
	\end{proof}
	
	From Claim \ref{l01}, we deduce that for each fixed $0 < \rho_0 < \rho_1< \infty$, $(\varphi_k)$ is a bounded sequence in $H^1(\rho_0,\rho_1)$. 
	By Arzela--Ascoli, passing to subsequences, and relabeling, there exists $\vphi \in C(0,\infty)$ such that for all 
	$0 < \rho_0 < \rho_1 < \infty$,
	\begin{align*}
	\vphi_k \rightharpoonup_k \vphi \quad \mbox{weakly in } H^1(\rho_0,\rho_1), \\
	\vphi_k  \rar_k \vphi \quad \mbox{strongly in } C([\rho_0,\rho_1]).  
	\end{align*}
	Hence, we have that 
	\begin{align}
	\begin{split}\label{jm}
	\cl J(\vphi) &= \lim_{\e \rar 0} \cl J_{\e}^{1/\e}(\vphi) \\
	&\leq \limsup_{\e \rar 0} \liminf_k \cl J_{\e}^{\e^{-1}} (\varphi_k) \\
	&\leq \cl J_m.  
	\end{split}
	\end{align}
	
	The proof of Proposition \ref{p01} then follows from showing 
	\begin{claim}\label{l02}%%%%%%%%%%%%%%%%%%%%%%%%%%%%%
		$\vphi$ is in $X$.  
	\end{claim}
	
	\begin{proof}[Proof of Claim \ref{l02}]
		Let $k \geq 1$ and $r > 0$.  By the fundamental theorem of calculus and Cauchy--Schwarz 
		\begin{align*}
		|n \pi - \vphi_k(r)| &= \left | \int_r^\infty \vphi_k'(\rho) d\rho \right | \\
		&\leq r^{-1/2} \left ( \int_r^\infty (\vphi_k'(\rho))^2 \rho^2 d\rho \right )^{1/2} \\
		&\leq r^{-1/2} (\cl J(\vphi_k))^{1/2} \\
		&\lesssim r^{-1/2}
		\end{align*}
		where the implied constant is uniform in $k$.  We let $k \rar \infty$ and deduce 
		\begin{align*}
		|n \pi - \vphi(r) | \lesssim r^{-1/2}
		\end{align*}
		which proves $\lim_{r \rar \infty} \vphi(r) = n\pi$.  
		
		We now prove $\vphi(0) = 0$.  Let $0 < r_0 < r_1$.  Then 
		as in Lemma \ref{l01}, we have 
		\begin{align*}
		|G(\vphi(r_1)) - G(\vphi(r_0))| \leq 2 \cl  J_{r_0}^{r_1}(\vphi). 
		\end{align*}
		Thus, $\lim_{r \rar 0} G(\vphi(r)) = L$ exists.  Since $G$ is continuously invertible on $[0,\infty)$ and $\vphi \geq 0$ (because 
		$\vphi_k \geq 0, \forall k$), it follows that $\vphi$ has 
		a limit $\ell$ as $r \rar 0$.  If this limit is nonzero, then $\varphi - \sin \vphi \cos \vphi > \ell/2 > 0$ for $r$ near 0 which 
		implies that $\cl J(\vphi) = \infty$, a contradiction.  Thus, $\ell = 0$, and
		we conclude that $\vphi \in X$.    
	\end{proof}
	
	By our definition of $\cl J_m$, \eqref{jm} and Claim \ref{l02}, we have proven Proposition \ref{p01}.  
\end{proof}

\subsection{Uniqueness of stationary Adkins--Nappi maps}

We will now prove the uniqueness of the solution obtained in the previous subsection and establish the properties stated in 
Theorem \ref{t01}.  We will first prove a few facts about solutions to \eqref{e00}.  The first two were proven in similar settings and the proofs follow in almost identical fashion.  

\begin{lem}\label{l05a}
	Let $\vphi$ be a solution to \eqref{e00}.  Then there exist $\alpha, \beta \in \R$ such that 
	\begin{align*}
	\vphi(r) &= n\pi - \al r^{-2} + O(r^{-6}) \quad \mbox{as } r \rar \infty, \\
	\vphi(r) &= \beta r + O(r^{3}) \quad \mbox{as } r \rar 0,
	\end{align*}
	where the $O(\cdot)$ terms satisfy the natural derivative bounds. 
\end{lem}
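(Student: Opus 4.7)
\emph{Plan of proof.} The proof is a standard ODE asymptotic analysis carried out independently at each endpoint. At both ends I would linearize \eqref{e00} around the constant value of $\varphi$ to obtain a perturbation of the Euler operator
\[
	Lu \;=\; -u'' - \frac{2}{r}u' + \frac{2}{r^2}u,
\]
whose indicial roots are $1$ and $-2$, so the homogeneous equation $Lu=0$ has fundamental solutions $r$ and $r^{-2}$. The boundary conditions $\varphi(0)=0$ and $\varphi(\infty)=n\pi$ select the leading exponent at each end, and all remaining terms are of strictly higher order so that a Green's function inversion combined with a bootstrap yields the claimed expansions.

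\emph{Asymptotics as $r\to\infty$.} Setting $\psi(r):= n\pi - \varphi(r)$ and using the identities $\sin 2\varphi = -\sin 2\psi$, $1-\cos 2\varphi = 2\psi^2 + O(\psi^4)$, and $\varphi - \sin\varphi\cos\varphi = n\pi - \tfrac{2}{3}\psi^3 + O(\psi^5)$, the equation becomes $L\psi = F(\psi,r)$ where
\[
	F(\psi,r) \;=\; \frac{2\psi - \sin 2\psi}{r^2} + \frac{2n\pi\,\psi^2 + O(\psi^4)}{r^4} \;=\; O\!\left(\frac{\psi^3}{r^2}\right) + O\!\left(\frac{\psi^2}{r^4}\right).
\]
Inverting $L$ by variation of parameters against the pair $(r, r^{-2})$ and imposing decay at infinity produces an integral equation of the form $\psi(r) = \alpha r^{-2} + (T\psi)(r)$, where $\alpha\in\R$ is uniquely determined by the requirement $\psi\to 0$ and $T$ is a quadratic integral operator. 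A first bootstrap using only the qualitative decay $\psi\to 0$ gives $\psi(r) = O(r^{-2})$; inserting this bound into $F$ produces $F = O(r^{-8})$, and a second application of the Green's function formula yields $(T\psi)(r) = O(r^{-6})$. This gives the expansion $\varphi(r) = n\pi - \alpha r^{-2} + O(r^{-6})$, and differentiating the integral representation produces the natural derivative bounds.

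\emph{Asymptotics as $r\to 0$.} At $r=0$ the equation has a regular singular point whose indicial roots are again $1$ and $-2$. Since $\varphi$ is continuous with $\varphi(0)=0$, the singular $r^{-2}$ mode is excluded, and a Frobenius-type series construction (equivalently, a contraction mapping around the ansatz $\varphi \approx \beta r$) produces a unique local representation $\varphi(r) = \beta r + w(r)$ with $\beta = \varphi'(0)$ and $w(0)=w'(0)=0$. Using the small-angle expansions of $\sin 2\varphi$, $\varphi - \sin\varphi\cos\varphi$ and $1-\cos 2\varphi$, the remainder satisfies $Lw = O(\varphi^3/r^2) + O(\varphi^5/r^4) = O(r)$; inverting $L$ by the same Green's function, but now imposing regularity at $0$, yields $w(r) = O(r^3)$ together with the natural derivative bounds.

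\emph{Principal difficulty.} The main technical work lies in the infinity bootstrap: one must promote the qualitative statement $\psi(r)\to 0$ to the quantitative rate $\psi(r)=O(r^{-2})$, and verify that the integral operator $T$ is a contraction on a suitable weighted space so that the identity $\psi = \alpha r^{-2} + T\psi$ unambiguously fixes the coefficient $\alpha$ and produces the sharp remainder $O(r^{-6})$. At the origin the analysis is cleaner because the nonlinear terms sit at order $r^3$ or higher while the linear part is of order $r$, so the corresponding fixed-point argument is nearly automatic.
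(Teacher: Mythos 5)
Your route is a valid alternative to the paper's but not identical in packaging. The paper, following McLeod--Troy, first establishes the auxiliary derivative bound $\varphi'(r) = o(r^{-1})$, passes to $x=\log r$, and observes that \eqref{e00} becomes an exponentially-small perturbation of the damped pendulum $\varphi''+\varphi'-\sin 2\varphi=0$; the expansion is then read off from the stable manifold of the saddle at $n\pi$ (and, at $-\infty$, at $0$). You work directly in $r$ by linearizing around each boundary value and inverting the Euler operator $L$ with indicial roots $\{1,-2\}$ via a Green's function / integral equation. These are the same linearization — in the $x$-variable the pendulum's linearization at $n\pi$ has characteristic roots $1$ and $-2$ as well — so the two arguments are essentially isomorphic; yours just trades phase-plane language for variation of parameters. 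What the paper's framing buys is that the preliminary step ``the trajectory approaches the equilibrium in phase space,'' i.e. $\varphi'=o(r^{-1})$, is made explicit, and the quantitative rate then follows from classical saddle/stable-manifold asymptotics for the autonomous pendulum.

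One gap to flag: your statement that ``a first bootstrap using only the qualitative decay $\psi\to 0$ gives $\psi(r)=O(r^{-2})$'' is not a straightforward Green's-function iteration. Knowing only $\psi=o(1)$, the forcing is $o(r^{-2})$ with no rate, and inverting $L$ against such a forcing does not directly return an $O(r^{-2})$ bound; the upgrade to the sharp rate is exactly the stable-manifold step (equivalently, the factorization $(\partial_x+2)(\partial_x-1)v = -H$ together with a smallness/absorption argument on a weighted space). You acknowledge this in your ``principal difficulty'' paragraph, but as written the first sentence of the infinity argument claims the hard step as if it were free. Either invoke the stable manifold theorem for the saddle at $n\pi$ explicitly, or run the two-factor ODE reduction and close a weighted bootstrap once a preliminary bound $\sup_{r\ge R}|\psi|$ is small; that is precisely what the paper's appeal to McLeod--Troy and to $\varphi'=o(r^{-1})$ is doing for you. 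The origin analysis, as you note, is genuinely easier because regularity at $0$ kills the $r^{-2}$ mode outright and the forcing already sits two orders above the linear mode.
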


\begin{proof}
	The proof follows in almost exactly the same way as the proof of Theorem 2.3 in \cite{McT} in the setting of co--rotational Skyrmions.  For the asymptotics at $\infty$, one first shows that $\phi'(r) = o(r^{-1})$.  The idea now is to 
	make the change of variables $x = \log r$ and use the fact that $d\vphi/dx = rd\vphi/dr = o(1)$ to write \eqref{e00} 
	as 
	\begin{align}\label{asympt ode}
	\frac{d^2 \vp}{dx^2} + \frac{d\vp}{dx} - \sin 2\vp + O(e^{-2x} ) = 0.  
	\end{align}
	The ODE \eqref{asympt ode} is asymptotically the autonomous ODE $\vp'' + \vp' - \sin 2\vp = 0$ (the damped pendulum) for which 
	the desired expansion at $\infty$ in the statement of Lemma \ref{l05a} holds (in the $x$ variable).  A similar argument also applies for the asymptotics at $r = 0$.  We omit the details 
	and refer the reader to the proof of Theorem 2.3 in \cite{McT} for the full details of the argument. 
\end{proof}

\begin{lem}\label{l05b}
	Let $\alpha \in \R$.  Then there exists a unique solution $\vp_\al$ to the ODE
	\begin{align*}
	&\vp_{rr} + \frac{2}{r} \vp_r = \frac{\sin 2\vp}{r^2} + \frac{(\vp - \sin \vp \cos \vp)(1 - \cos 2 \vp)}{r^4}, \quad r > 0,
	\end{align*}
	with the property that 
	\begin{align*}
	\vp_\al(r) = n\pi - \al r^{-2} + O(r^{-6}) \quad \mbox{as } r \rar \infty,
	\end{align*}
	where the $O(\cdot)$ term satisfies the natural derivative bounds. 
\end{lem}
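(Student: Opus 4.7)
The plan is to reduce the construction to a fixed-point problem on a half-line $[R,\infty)$ for $R$ large, and then extend the resulting solution to all of $(0,\infty)$ by standard ODE theory. The linearization of the equation at infinity around $\vp = n\pi$ is an Euler-type operator whose decaying homogeneous mode is exactly $r^{-2}$, which matches the one free parameter $\alpha$ in the prescribed asymptotics.

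First, I would substitute $\vp = n\pi + w$; using $\sin 2(n\pi + w) = \sin 2w$ and $\sin(n\pi + w)\cos(n\pi+w) = \sin w \cos w$, the ODE becomes
\begin{align*}
L w &:= w_{rr} + \frac{2}{r} w_r - \frac{2}{r^2} w \\
&= \frac{\sin 2w - 2w}{r^2} + \frac{(n\pi + w - \sin w \cos w)(1 - \cos 2w)}{r^4} =: \mathcal{N}(w, r).
\end{align*}
The Taylor expansions $\sin 2w - 2w = O(w^3)$, $w - \sin w \cos w = O(w^3)$, $1 - \cos 2w = O(w^2)$ show $\mathcal{N}(w,r) = O(w^3/r^2 + w^2/r^4)$, which is $O(r^{-8})$ whenever $w = O(r^{-2})$. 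The operator $L$ has homogeneous solutions $u_1(r) = r$ and $u_2(r) = r^{-2}$ with Wronskian $-3 r^{-2}$; since $L(-\alpha r^{-2}) = 0$, the ansatz $\vp = n\pi - \alpha r^{-2} + z$ reduces the problem to
\begin{align*}
L z = \mathcal{N}(-\alpha r^{-2} + z,\, r).
\end{align*}

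Next, I would invert $L$ via variation of parameters by defining
\begin{align*}
T h(r) := -\frac{r}{3} \int_r^\infty h(s)\, ds + \frac{r^{-2}}{3} \int_r^\infty s^3 h(s)\, ds.
\end{align*}
A direct computation shows that $T$ maps bounds of the form $|h(s)| \lesssim s^{-8}$ to $|T h(r)| \lesssim r^{-6}$, with analogous bounds on the first two derivatives. On the Banach space
\begin{align*}
X_\delta := \bigl\{ z \in C^2([R, \infty)) \,:\, \|z\|_* := \sup_{r \geq R} \bigl( r^6|z(r)| + r^7|z'(r)| + r^8|z''(r)|\bigr) \leq \delta \bigr\},
\end{align*}
the map $z \mapsto T \mathcal{N}(-\alpha r^{-2} + z,\, \cdot)$ becomes a contracting self-map once $R = R(\alpha,\delta)$ is taken sufficiently large---the self-mapping property is immediate from the estimates on $T$ and $\mathcal{N}$, while the Lipschitz constant of $\mathcal{N}$ in $z$ on $X_\delta$, measured into the source-weight norm, decays like a negative power of $R$. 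Banach's fixed point theorem then produces a unique $z \in X_\delta$, and $\vp_\alpha := n\pi - \alpha r^{-2} + z$ solves the ODE on $[R, \infty)$ with the stated asymptotics. Standard Cauchy--Lipschitz theory extends $\vp_\alpha$ uniquely to a smooth solution on all of $(0, \infty)$, and uniqueness in the sense of the lemma follows because any solution with the prescribed asymptotics produces, for $R$ large enough, a $z$ lying in some $X_\delta$, and hence must coincide with the fixed point on $[R, \infty)$ and therefore on $(0, \infty)$.

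The main obstacle will be the weighted-norm bookkeeping: one must verify simultaneously that $T$ sends the source weight $r^{-8}$ to the target weight $r^{-6}$ at the level of $z, z', z''$, and that $\mathcal{N}$ is Lipschitz in $z$ with the small constant needed for contraction. These are routine but somewhat delicate power-counting estimates. A conceptually cleaner alternative would be to set $x = \log r$, which transforms the ODE into a planar system that is a small (exponentially decaying) perturbation of an autonomous one whose equilibrium $(n\pi, 0)$ has eigenvalues $\{1, -2\}$; the stable manifold theorem then directly produces the one-parameter family $\vp_\alpha$.
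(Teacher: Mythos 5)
The paper does not actually give a proof of this lemma: it defers to Proposition 2.2 of~\cite{L15} (for $n=0$) and asserts the case $n>0$ is analogous. So you have supplied a self-contained argument where the paper has none. Your argument is correct in substance and essentially the standard one in this context: the substitution $\vp=n\pi+w$ turns the dominant part of the equation into the Euler operator $Lw=w_{rr}+\tfrac2r w_r-\tfrac{2}{r^2}w$ with indicial roots $\{1,-2\}$; the inverse $T$ built from $u_1=r$, $u_2=r^{-2}$ (with Wronskian $-3r^{-2}$) has the correct form, and the power-count $|h|\lesssim r^{-8}\Rightarrow|Th|\lesssim r^{-6}$ (with the stated derivative gains) is right. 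Your closing remark that one may equivalently pass to $x=\log r$ and invoke the stable manifold theorem for a perturbation of the damped pendulum with eigenvalues $\{1,-2\}$ at $(n\pi,0)$ is precisely the McLeod--Troy-style argument the paper uses for Lemma~\ref{l05a}, so both routes are consistent with the paper's framework.

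Two points deserve care in the write-up. First, the ball radius $\delta$ cannot be taken arbitrarily small: the ``zeroth iterate'' $T\mathcal N(-\al r^{-2},\cdot)$ is of size $\sim(n\pi\al^2+|\al|^3)r^{-6}$, and this does not shrink as $R\to\infty$ because $T$ maps $r^{-8}$ data to $r^{-6}$ output with no extra gain of $R^{-1}$. So one must choose $\delta\gtrsim n\pi\al^2+|\al|^3$ first, and then the $R^{-4}$ gain you correctly identified in the Lipschitz estimate (since $\mathcal N$ is Lipschitz in $z$ with weight $r^{-6}$, producing a source $\lesssim r^{-12}\|z_1-z_2\|_*$, hence $T$-output $\lesssim r^{-10}$) closes the contraction on $X_\delta$ for $R=R(\al)$ large. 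Your phrase ``$R=R(\al,\delta)$ sufficiently large'' is compatible with this, but the order of quantifiers should be made explicit. Second, the statement ``Cauchy--Lipschitz extends $\vp_\al$ uniquely to a smooth solution on all of $(0,\infty)$'' is not automatic; the backward continuation from $[R,\infty)$ lives a priori only on a maximal interval $(r_0,\infty)$ and could in principle blow up at some $r_0>0$. This does not affect the content needed for the lemma as used later in the paper (one only needs the solution near $\infty$ and the injectivity of $\al\mapsto\vp_\al$ there), but if you want the stated domain $r>0$ literally you need an extra argument, which presumably~\cite{L15} supplies.
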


\begin{proof}
	See Proposition 2.2 in \cite{L15} for the $n = 0$ case.  The case $n > 0$  is nearly identical and we omit the proof.  
\end{proof}

\begin{lem}\label{l05}%%%%%%%%%%%%%%%%%%%%%
	Let $\vphi$ be a solution to \eqref{e00}.  Then $0 < \vphi(r) < n\pi$ for all $r > 0$.    
\end{lem}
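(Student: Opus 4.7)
The plan is to argue by contradiction, handling the bounds $\varphi(r) < n\pi$ and $\varphi(r) > 0$ by the same template applied at an interior maximum or minimum. I present the argument for the upper bound. Suppose some $r_0 > 0$ satisfies $\varphi(r_0) = n\pi$. If additionally $\varphi'(r_0) = 0$, then since \eqref{e00} is regular at $r_0$ and admits the constant solution $n\pi$ with the same Cauchy data, ODE uniqueness forces $\varphi \equiv n\pi$, contradicting $\varphi(0) = 0$. Hence every crossing of the level $n\pi$ is transverse, and combined with the asymptotic condition $\varphi(\infty) = n\pi$ this yields an interior global maximum at some $r_M > 0$ with $M := \varphi(r_M) > n\pi$, $\varphi'(r_M) = 0$, and $\varphi''(r_M) \leq 0$.

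Setting $g(\varphi) := \varphi - \sin\varphi\cos\varphi$, note $g(0) = 0$ and $g'(\varphi) = 2\sin^2\varphi \geq 0$, so $g > 0$ on $(0,\infty)$. Evaluating \eqref{e00} at $r_M$ (after substituting $1-\cos 2M = 2\sin^2 M$) gives
\[
0 \geq \varphi''(r_M) = \frac{\sin 2M}{r_M^2} + \frac{2\, g(M)\, \sin^2 M}{r_M^4}.
\]
ODE uniqueness at each stationary point $(m\pi, 0)$, $m \geq n+1$, again excludes $M \in \pi\Z$, so $\sin^2 M > 0$ and the second term is strictly positive. When $M \in (n\pi, n\pi + \pi/2]$ we also have $\sin 2M \geq 0$, giving $\varphi''(r_M) > 0$ and the desired contradiction.

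The main obstacle is the remaining range $M \in \bigcup_{m \geq n}(m\pi + \pi/2, (m+1)\pi)$, in which $\sin 2M < 0$ can counteract the second (positive) term and the pointwise argument alone no longer suffices. To dispose of this case I plan to combine the pointwise analysis with the Pohozaev-type identity
\[
\int_0^\infty r\, (\varphi'(r))^2\, dr = \int_0^\infty \frac{g(\varphi(r))^2}{r^3}\, dr,
\]
obtained by multiplying \eqref{e00} by $r^2 \varphi'$ and integrating over $(0,\infty)$ (boundary contributions vanishing by Lemma \ref{l05a}), together with the transversality at each integer multiple of $\pi$ forced by ODE uniqueness. Since $g(m\pi) = m\pi$ grows linearly in $m$ while $\cl J(\varphi)$ is finite, a careful accounting of the identity along with the IVT-produced auxiliary critical points in $(r_*, r_M)$ rules out any such excursion above $n\pi$. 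The lower bound $\varphi(r) > 0$ follows by applying the identical scheme at an interior minimum, using that $g$ is odd so the sign of the second term flips in tandem with the sign of $\sin 2\varphi$ in the relevant intervals.
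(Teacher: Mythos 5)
Your pointwise maximum-principle argument works only when the overshoot above $n\pi$ is small (specifically $M\in(n\pi,n\pi+\pi/2]$, so that $\sin 2M\ge 0$), and you correctly identify that it fails otherwise. But what you offer for the remaining range $M\in(m\pi+\pi/2,(m+1)\pi)$ is not a proof: you write down a Pohozaev-type identity (which is correct, given Lemma~\ref{l05a}) and then assert that ``a careful accounting'' of it ``along with the IVT-produced auxiliary critical points'' finishes the job. That is not an argument — you never explain which quantity you estimate, how the auxiliary critical points enter, or why finiteness of $\cl J(\varphi)$ or linear growth of $g(m\pi)$ produces a contradiction. As written, the hard case is exactly the one left open, so the lemma is not proved. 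The same gap appears in the lower bound, where your scheme again fails once $\varphi$ dips below $-\pi/2$.

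The paper avoids the case split entirely by introducing the Lyapunov-type quantity
\begin{align*}
W(r) \;=\; r^4\varphi'(r)^2 - 2r^2\sin^2\varphi - \bigl(\varphi-\sin\varphi\cos\varphi\bigr)^2,
\end{align*}
for which the equation \eqref{e00} gives $W'(r)=-4r\sin^2\varphi\le 0$, while Lemma~\ref{l05a} gives $W(0)=0$ and $W(\infty)=-n^2\pi^2$. If $\varphi(r_0)=0$ for some $r_0>0$, then $W(r_0)=r_0^4\varphi'(r_0)^2>0$ (nonconstancy plus ODE uniqueness), contradicting $W\le W(0)=0$; if $\varphi$ exceeds $n\pi$, then at a critical point $r_1$ above the level $n\pi$ one has $W(r_1)<-\bigl(g(n\pi)\bigr)^2=-n^2\pi^2$ by monotonicity of $g(\rho)=\rho-\sin\rho\cos\rho$, contradicting $W(r)> W(\infty)=-n^2\pi^2$ for all finite $r$. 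This handles all excursions at once, regardless of how far $\varphi$ wanders past $n\pi$, which is precisely what your evaluation of $\varphi''$ at the max cannot do. I would abandon the case analysis and look for a monotone quantity of this kind; your Pohozaev identity is essentially $W(\infty)-W(0)$ rewritten, but the integrated form throws away exactly the pointwise information that makes the contradiction work.
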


\begin{proof}
	Define 
	\begin{align*}
	W(r) = r^4 \vphi'(r)^2 - 2r^2 \sin^2 \vphi - (\vphi - \sin \vphi \cos \vphi)^2.  
	\end{align*}
	Then using \eqref{e00}, we have that 
	\begin{align*}
	W'(r) = -4r \sin^2 \vphi,
	\end{align*}
	i.e. $W(r)$ is strictly decreasing on $[0,\infty)$.  Moreover, by Lemma \ref{l05a}, we have 
	\begin{align}\label{e05}%%%%%%%%%%%%%%%%%%%%%%
	W(0) = 0, \quad W(\infty) = - n^2 \pi^2. 
	\end{align}
	Suppose that $r_0 > 0$ and $\vphi(r_0) = 0$.  Then $W(r_0) > 0$ since $\vphi$ is nonconstant.  However, this is impossible since 
	$W(0) = 0$ and $W(r)$ is decreasing.  Suppose now that there exists $r_0 > 0$ such that $\vphi(r_0) = n\pi$ and
	$\vphi(r) < n\pi$ for all $r < r_0$.  Then
	since $\vphi$ is nonconstant and $\vphi(\infty) = n\pi$, there exists $r_1 > r_0$ such that 
	\begin{align*}
	\vphi(r_1) > n\pi \quad \mbox{and} \quad \vphi'(r_1) = 0.
	\end{align*}
	Since $\vphi \mapsto \vphi - \sin \vphi \cos \vphi$ is an increasing function on $[0,\infty)$, by our definition of $r_1$ we see that   
	\begin{align*}
	W(r_1) &= -2 r_1^2 \sin^2 \vphi(r_1) - (\vphi(r_1) - \sin \vphi(r_1) \cos \vphi(r_1))^2 \\
	&< -( n\pi - \sin n\pi \cos n\pi )^2 \\
	&= - n^2 \pi^2.  
	\end{align*}
	But since $W(r)$ is strictly decreasing, this implies that we cannot have $W(\infty) = -n^2 \pi^2$, a contradiction to \eqref{e05}. Thus, 
	$0 < \vphi(r) < n\pi$ for all $r > 0$. 
\end{proof}

We will now show that two solutions to \eqref{e00} are equal if they both have positive derivatives. The proof is in similar spirit to the argument used to prove uniqueness of corotational Skyrmions in~\cite{McT}.  We will later use ideas from the proof to show that any solution to \eqref{e00} must have positive derivative and conclude the uniqueness of the degree $n$
Adkins--Nappi harmonic map.

\begin{lem}\label{l06}%%%%%%%%%%%%%%%%
	Let $\vphi_1,\vphi_2$ be solutions to \eqref{e00} such that $\vphi_j'(r) > 0$ for all $r \geq 0$, $j = 1,2$.  Then $\vphi
	_1 = \vphi_2.$
\end{lem}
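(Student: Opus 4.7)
The plan is a proof by contradiction via a comparison substitution, in the spirit of the Skyrmion uniqueness argument of McLeod--Troy \cite{McT}. Suppose $\vp_1 \neq \vp_2$. By Lemma \ref{l05b}, the asymptotic coefficients at infinity must differ: $\alpha_1 \neq \alpha_2$, and without loss of generality $\alpha_1 > \alpha_2$. Because each $\vp_j$ is by hypothesis a smooth strictly increasing bijection $(0,\infty) \to (0, n\pi)$, the map
\[
R(r) := \vp_1^{-1}(\vp_2(r)), \qquad r \in (0,\infty),
\]
is smooth and strictly increasing with $R(0) = 0$, $R(\infty) = \infty$, and $R(r) \equiv r$ on $(0,\infty)$ if and only if $\vp_1 = \vp_2$. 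The goal is to show $R \equiv \mathrm{id}$ by analyzing $S(r) := R(r)/r$.

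The key step is to derive a pointwise identity for $R$. Writing $V(r) := \vp_2(r) = \vp_1(R(r))$ and $h(r) := \vp_1'(R(r)) > 0$, substituting $\vp_2 = \vp_1 \circ R$ into \eqref{e00} for $\vp_2$ and using \eqref{e00} for $\vp_1$ at the point $R$ to eliminate $\vp_1''(R)$ yields
\[
h(r)\left[R''(r) + \frac{2R'(r)}{r} - \frac{2R'(r)^2}{R(r)}\right]
= f(V,r) - f(V,R)R'(r)^2,
\]
where $f(\vp,r) = \sin(2\vp)/r^2 + G'(\vp)F(\vp)/r^4$ with $G'(\vp) := \vp - \sin\vp\cos\vp$ and $F(\vp) := 1 - \cos 2\vp = 2\sin^2\vp$. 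At any interior critical point $r_0$ of $S$, substituting $R(r_0) = r_0 S(r_0)$, $R'(r_0) = S(r_0)$, and $R''(r_0) = r_0 S''(r_0)$ causes both the $2R'/r$ and $2(R')^2/R$ terms on the left, and the entire $\sin(2V)$ contribution on the right, to cancel, leaving the clean identity
\[
h(r_0)\, r_0\, S''(r_0)
= \frac{G'(V(r_0))\,F(V(r_0))}{r_0^4}\cdot\frac{S(r_0)^2 - 1}{S(r_0)^2}.
\]

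Since $h, r_0, G'(V) > 0$ (the latter because $V \in (0, n\pi)$) and $F(V) \geq 0$, the sign of $S''(r_0)$ matches that of $F(V(r_0))(S(r_0)^2 - 1)$. In particular, at any interior critical point at which $\sin V(r_0) \neq 0$, $S$ cannot be a local maximum with $S > 1$ nor a local minimum with $S < 1$. Combined with the boundary values $S(0^+) = \beta_2/\beta_1$ and $S(\infty) = \sqrt{\alpha_1/\alpha_2} > 1$ that follow from the expansions in Lemma \ref{l05a}, this sign analysis — applied on each of the $n$ open subintervals of $(0,\infty)\setminus\{b_1,\ldots,b_{n-1}\}$, where $b_k := \vp_2^{-1}(k\pi)$ — forces $S \equiv 1$ throughout, contradicting $S(\infty) > 1$.

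The main technical obstacle is the handling of the $n-1$ degenerate critical points $b_k$ at which $F(V) = 0$ and the second-order sign analysis is inconclusive. Since these points are isolated, the strict conclusions of the preceding paragraph apply cleanly on each adjacent regular subinterval; a higher-order local expansion of the identity near each $b_k$, together with continuity of $S$, then rules out any hidden extremum of the forbidden type at $b_k$, so the constraints on the regular subintervals propagate across the $b_k$'s and collapse $S$ to the constant function $1$ on all of $(0,\infty)$.
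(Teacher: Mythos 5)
Your substitution $R=\vp_1^{-1}\circ\vp_2$ and the resulting identity at critical points of $S=R/r$ are correct: the $\sin 2V$ term and the $2R'/r-2(R')^2/R$ term do cancel exactly when $R'(r_0)=R(r_0)/r_0$, leaving $h(r_0)r_0S''(r_0)=r_0^{-4}G'(V_0)F(V_0)\bigl(S_0^2-1\bigr)/S_0^2$, and the sign conclusion (no strict interior local max of $S$ above $1$ and no strict interior local min below $1$, at nondegenerate critical points) follows. This is a genuinely different route from the paper, which changes variables to $x=\log r$ and works with $p=\vp'$ as a function of $\vp$; the paper subtracts the first-order ODEs for $p_1,p_2$, integrates with an integrating factor, and uses the resulting representation to propagate the ordering $p_2>p_1$, $x_2>x_1$ from $\vp$ near $n\pi$ (where $\al_1>\al_2$ gives it) down to $\vp$ near $0$, where the opposite ordering on $p$ is forced by $\beta_1>\beta_2$. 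The paper's comparison is first-order and pointwise in $\vp$, so it bypasses the degenerate set $\{V=k\pi\}$ entirely.

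However, your argument has a gap that is more fundamental than the degenerate points you flag. The sign conditions at critical points do \emph{not} ``force $S\equiv1$.'' They are vacuously satisfied by any monotone $S$; in particular, a strictly increasing $S$ with $S(0^+)=\beta_2/\beta_1<1$ and $S(\infty)=\sqrt{\al_1/\al_2}>1$ has no interior critical points at all and is perfectly compatible with your sign analysis. So asserting that the analysis collapses $S$ to the constant $1$ ``contradicting $S(\infty)>1$'' is not a valid conclusion. To close the argument along these lines you would need the next-order asymptotics from Lemma~\ref{l05a}: the $r^{-6}$ term at $\infty$ gives $S(r)=\sqrt{\al_1/\al_2}\,(1+\delta_0 r^{-4}+\cdots)$ with $\delta_0>0$ when $\al_1>\al_2$, so $S$ approaches its limit from above and $S'<0$ near $\infty$; one then propagates $S'<0$ leftward across each regular subinterval (a critical point with $S>1$ to the right of which $S$ is decreasing would be a strict local minimum with $S''\le0$, contradicting the identity) to conclude $S'<0$ everywhere, hence $\beta_2/\beta_1=S(0^+)>1$, i.e.\ $\beta_2>\beta_1$; and finally the $r^3$ coefficient near the origin shows $S$ must then be \emph{increasing} near $0$, a contradiction. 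None of this endpoint analysis appears in your write-up.

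The degenerate points $b_k$ remain a secondary but real difficulty for the propagation step just sketched: at $b_k$ the identity gives only $S''(b_k)=0$, and the identity holds only \emph{at} critical points of $S$, so one cannot simply differentiate it to obtain higher-order sign information; your appeal to ``a higher-order local expansion'' is not backed by a calculation and it is not clear it goes through. The paper's $p$--$\vp$ reformulation avoids this issue by design, which is one concrete advantage of that route.
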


\begin{proof}
	We will assume that $n > 0$ since uniqueness in the case $n = 0$ was proved in 
	\cite{L15}.  We first make some preliminary observations.  If $\vphi$ satisfies \eqref{e00}, then by the change of variables $x = \log r$, we see that $\vphi$ satisfies
	\begin{align}
	\begin{split}\label{e06}%%%%%%%%%%%%%%%
	&\vphi'' + \vphi' = \sin 2 \vphi + e^{-2x} (\vphi - \sin \vphi \cos \vphi)(1 - \cos 2 \vphi), \\
	&\vphi(-\infty) = 0, \quad \vphi(\infty) = n\pi.  
	\end{split}
	\end{align}
	Here we now denote $\vphi' = \frac{d\vphi}{dx} = r \frac{d\vphi}{dr}$. 
	If $\vphi' > 0$, we may make a change of variables and consider 
	$\vphi$ as the independent variable and $x = \vp^{-1}$ and $p = \frac{d\vphi}{dx}$ as the dependent variables.  Thus, by \eqref{e06} the equation solved
	by $p$ is 
	\begin{align}
	p \frac{dp}{d\vp} + p = \sin 2 \vphi + e^{-2x} (\vphi - \sin \vphi \cos \vphi)(1 - \cos 2 \vphi). 
	\end{align}
	Here $x = x(\vphi)$.  Suppose, towards a contradiction, that we have two different solutions $\vphi_1, \vphi_2$.  We let 
	$x_j = \vp_j^{-1}$ and $p_j = \frac{d \vp_j}{dx}$.  Then we have
	\begin{align}
	p_j \frac{dp_j}{d\vphi} +  p_j = \sin 2 \vphi + e^{-2x_j} (\vphi - \sin \vphi \cos \vphi)(1 - \cos 2 \vphi), \quad j = 1,2.  
	\end{align}
	Subtracting the equation satisfied by $p_1$ from the equation satisfied by $p_2$
	and rearranging, we have
	\begin{align*}
	p_2 \frac{d}{d\vp}(p_2 - p_1) &+ \Bigl (1 + \frac{dp_1}{d\vphi} \Bigr )(p_2 - p_1) \\&=
	(e^{-2x_2} - e^{-2x_1}) (\vphi - \sin \vphi \cos \vphi)(1 - \cos 2 \vphi)
	\end{align*}
	Let $\vp_0 \in (0,n\pi)$ be arbitrary and define  
	\begin{align*}
	q(\vphi) &= p_2^{-1} \left ( 1 + \frac{dp_1}{d\vphi} \right ),\\ 
	Q(\vphi) &= \int_\vphi^{\vphi_0} q( \phi) d \phi, \\
	f(\vphi) &= p_2^{-1} (e^{-2x_1} - e^{-2x_2}) (\vphi - \sin \vphi \cos \vphi)(1 - \cos 2 \vphi).
	\end{align*} 
	Then  
	\begin{align*}
	\frac{d}{d\vp}(p_2 - p_1) + q (p_2 - p_1) = -f \implies 
	\frac{d}{d\vphi}((p_2 - p_1) e^{-Q} ) = -e^{-Q} f.
	\end{align*}
	Hence, we see that 
	\begin{align}
	(p_2 - p_1)(\vphi) = e^{Q(\vphi)} (p_2 - p_1)(\vphi_0) + \int_\vphi^{\vphi_0} e^{Q(\vphi)-Q(\phi)} f(\phi) d\phi. \label{e07}%%%%%%%%%%%%%%%%%%%%%
	\end{align}

	We now make a key observation based on \eqref{e07}. In particular,
	if $p_2 > 0$ on $(0,n\pi)$ and there exists $ 
	\vp_0 \in (0,n\pi)$ such that $p_2(\vphi_0) > p_1(\vphi_0)$ and $x_2(\vp_0) > x_1(\vp_0)$, then
	\begin{align}
	\begin{split}\label{e08}%%%%%%%%%%%%%%%
	p_2(\vphi) > p_1(\vphi) \mbox{ and } x_2(\vphi) > x_1(\vphi),  \quad \forall
	\vphi \leq \vphi_0.
	\end{split}
	\end{align}
	Indeed, suppose $\vp_1 < \vp_0$ and $p_2(\vp) > p_1(\vp)$ for all $\vp_1 < \vp \leq \vp_0$.  
	Then since $p_j = ( \frac{dx_j}{d\vp} )^{-1}$, we have for 
	all $\vp_1 < \vp \leq \vp_0$ 
	\begin{align*}
	p_2(\vp) > p_1(\vp) &\implies \frac{d}{d\vp}(x_2(\vp) - x_1(\vp)) < 0.
	\end{align*}
	This implies upon integrating that 
	\begin{align*}
	x_2(\vphi) - x_1(\vphi) > x_2(\vphi_0) - x_1(\vphi_0) > 0, \quad \forall \vp_1 \leq \vp \leq \vp_0. 
	\end{align*}
	Since $p_2 > 0$, the previous implies that   
	\begin{align*}
	x_2(\vphi) > x_1(\vphi), \quad \vp_1 \leq \vp \leq \vp_0 &\implies 
	e^{-2x_1(\vphi)} - e^{-2x_2(\vphi)} > 0, \quad \vp_1 \leq \vp \leq \vp_0, \\
	&\implies f(\vp) > 0, \quad \vp_1 \leq \vp \leq \vp_0. 
	\end{align*}
	Hence by \eqref{e07}
	\begin{align*}
	p_2(\vp) - p_1(\vp) > 0, \quad \vp_1 \leq \vp \leq \vp_0.  
	\end{align*}
	Thus, if  $x_2(\vp) > x_1(\vp)$ and $p_2(\vp) > p_1(\vp)$ for all $\vp_1 < \vp \leq \vp_0$, we have $x_2(\vp) > x_1(\vp)$ and
	$p_2(\vp) > p_1(\vp)$ 
	for all $\vp_1 \leq \vp \leq \vp_0$. By a continuity argument it follows that $p_2(\vp) > p_1(\vp)$ and $x_2(\vp) > x_1(\vp)$ for all $\vp \leq 
	\vp_0$.
	
	By Lemma \ref{l05a} and Lemma \ref{l05}, if $\vp$ is a solution to \eqref{e00} then there exist unique $\al, \beta > 0$ such that 
	\begin{align*}
	\vp(x) &= n\pi - \al e^{-2x} + \frac{1}{14} \Bigl ( \frac{2}{3}\al^3 + n\pi \al^2 \Bigr ) e^{-6x} + O(e^{-10x}), \quad \mbox{as } x \rar \infty, \\
	\vp(x) &= \beta e^{x} + \frac{2}{15} ( \beta^5 - \beta^3 ) e^{3x} + O(e^{5x}), \quad \mbox{as } x \rar -\infty,
	\end{align*}
	where the $O(\cdot)$ terms are determined by $\alpha$ and $\beta$ respectively. 
	It follows that $p$ satisfies 
	\begin{align}
	\begin{split}\label{e09}%%%%%%%%%%%%%%%%%%%%%%%%%%5
	p &= 2\al e^{-2x} - \frac{3}{7} \Bigl ( \frac{2}{3}\al^3 + n\pi \al^2 \Bigr ) e^{-6x} + O(e^{-10x})\\
	&= 2( n\pi - \vphi) - \frac{2}{7} \Bigl ( \frac{2}{3}\al^3 + n\pi \al^2 \Bigr ) e^{-6x} + O(e^{-10x})\\
	&= 2 (n\pi - \vp) - \frac{2}{7} \Bigl (\frac{2}{3} + \frac{n\pi}{\al} \Bigr ) (n\pi - \vp)^3 + O((n\pi - \vp)^5)
	\end{split}
	\end{align}
	as $\vp \rightarrow n\pi^-$.  Similarly, we have
	\begin{align}
	p &= 2 \vp + \frac{4}{15} ( \beta^2 - 1 ) \vp^3 + O(\vp^5)  \label{e10}%%%%%%%%%%%%%%%%%%%%%%%%%%%%%%%% 
	\end{align}
	as $\vp \rightarrow 0^+$. Suppose $\vp_2$ has coefficients $\al_2,\beta_2 > 0$ and $\vp_1$ has coefficients $\al_1,\beta_1 > 0$ appearing in their respective asymptotics where 
	(without loss of generality)
	\begin{align*}
	\al_2 > \al_1.  
	\end{align*}
	Then clearly $x_2(\vp) > x_1(\vp)$ for all $\vp$ sufficiently close to $n\pi$ since for $x$ large 
	\begin{align*}
	\vp_2(x) = n\pi - \al_2 e^{-2x} + O(e^{-6x}) < n \pi - \al_1 e^{-2x} + O(e^{-6x}) = \vp_1(x). 
	\end{align*}
	Moreover, we see that $p_2(\vp) > p_1(\vp)$ for $\vp$ sufficiently close to $n\pi$ since by \eqref{e09} we have
	\begin{align*}
	p_2(\vp) &=2 (n\pi - \vp) - \frac{2}{7} \Bigl (\frac{2}{3} + \frac{n\pi}{\al_2} \Bigr ) (n\pi - \vp)^3 + O((n\pi - \vp)^5)\\&> 
	2 (n\pi - \vp) - \frac{2}{7} \Bigl (\frac{2}{3} + \frac{n\pi}{\al_1} \Bigr ) (n\pi - \vp)^3 + O((n\pi - \vp)^5)= p_1(\vp). 
	\end{align*}
	Thus, by our observation \eqref{e08},
	we have $p_2(\vp) > p_1(\vp)$ and $x_2(\vp) > x_1(\vp)$ for all $\vp \in (0,n\pi)$.  In particular, the constraint $x_2(
	\vp) > x_1(\vp)$ for
	all $\vp \in (0,n\pi)$ 
	implies that $\beta_1 > \beta_2 > 0$. But then for $\vp$ near 0, we have by \eqref{e10}
	\begin{align*}
	p_1(\vp) &= 2 \vp + \frac{4}{15} ( \beta_1^2 - 1 ) \vp^3 + O(\vp^5)\\& > 2 \vp + \frac{4}{15} ( \beta_2^2 - 1 ) \vp^3 + O(\vp^5) = p_2(\vp),  
	\end{align*}
	which contradicts $p_2(\vp) > p_1(\vp)$ for all $\vp \in (0,n\pi)$. Thus, no two distinct solutions $\vp_1,\vp_2$ exist.
	This completes the proof. 
\end{proof} 

Before showing that every solution $\vphi(r)$ to \eqref{e00} satisfies 
$\vphi'(r) >  0$ for all $r \geq 0$, we will first need to establish a few simple facts.  

\begin{lem}\label{l07a}%%%%%%%%%%%%%%%%%%%%%%
	Let $\vphi$ be a solution to \eqref{e00}, and suppose that $\vphi'(r_0) = \vp''(r_0) = 0$.  Then there exists 
	$\delta > 0$ such that $\vp'(r) < 0$ for all $r$ with $0 < |r - r_0| < \de$. 
\end{lem}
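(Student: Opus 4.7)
The plan is to compute $\vp'''(r_0)$ and show that it is strictly negative, from which the conclusion is immediate by Taylor expansion: since $\vp'(r_0) = \vp''(r_0) = 0$,
\begin{align*}
\vp'(r) = \tfrac{1}{2}\vp'''(r_0)(r - r_0)^2 + O\bigl((r-r_0)^3\bigr),
\end{align*}
so $\vp'(r) < 0$ on a deleted neighborhood of $r_0$ provided $\vp'''(r_0) < 0$.

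To compute $\vp'''(r_0)$ I would differentiate \eqref{e00} once in $r$. Every term produced on the right--hand side carries a factor of $\vp'$ except the two coming from differentiating $r^{-2}$ and $r^{-4}$, so evaluating at $r_0$ and using $\vp'(r_0) = \vp''(r_0) = 0$ collapses the result to
\begin{align*}
\vp'''(r_0) = -\frac{2 \sin 2\vp_0}{r_0^3} - \frac{4(\vp_0 - \sin \vp_0 \cos \vp_0)(1 - \cos 2\vp_0)}{r_0^5}, \quad \vp_0 := \vp(r_0).
\end{align*}
On the other hand, \eqref{e00} at $r_0$ with the same vanishing conditions is
\begin{align*}
\frac{\sin 2\vp_0}{r_0^2} = -\frac{(\vp_0 - \sin \vp_0 \cos \vp_0)(1 - \cos 2\vp_0)}{r_0^4}.
\end{align*}
Substituting this into the previous display to eliminate $\sin 2 \vp_0$ and invoking $1 - \cos 2\vp_0 = 2\sin^2 \vp_0$ reduces everything to the clean formula
\begin{align*}
\vp'''(r_0) = -\frac{4 \, (\vp_0 - \sin \vp_0 \cos \vp_0) \sin^2 \vp_0}{r_0^5}.
\end{align*}

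It then remains to show that both factors $\vp_0 - \sin \vp_0 \cos \vp_0$ and $\sin^2 \vp_0$ are strictly positive. The first is easy: the map $\rho \mapsto \rho - \sin \rho \cos \rho$ (essentially $G'$ with $G$ as in the proof of Claim \ref{l01}) is strictly increasing on $[0,\infty)$ with value zero at the origin, and $\vp_0 > 0$ by Lemma \ref{l05}. The genuine obstacle is excluding $\sin \vp_0 = 0$, i.e.\ ruling out $\vp_0 = k\pi$ for some integer $k \in \{1,\ldots,n-1\}$, a possibility that Lemma \ref{l05} does not preclude when $n \geq 2$. The plan here is to invoke ODE uniqueness: if $\vp_0 = k\pi$, then both $\vp$ and the constant function $r \mapsto k\pi$ satisfy \eqref{e00} with the same Cauchy data $(k\pi, 0)$ at $r_0 > 0$. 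Since the coefficients of \eqref{e00} are smooth away from the origin, the Picard--Lindel\"of theorem forces $\vp \equiv k\pi$ near $r_0$, and by standard continuation this extends to all of $(0, \infty)$, contradicting $\vp(\infty) = n\pi$ with $k < n$. Hence $\sin \vp_0 \ne 0$, $\vp'''(r_0) < 0$, and the Taylor argument of the first paragraph completes the proof.
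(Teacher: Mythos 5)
Your proof is correct and follows essentially the same approach as the paper: differentiate \eqref{e00} once, use $\vp'(r_0)=\vp''(r_0)=0$ together with \eqref{e00} itself to reduce $\vp'''(r_0)$ to a manifestly signed expression, and conclude. If anything you are more careful than the paper on one point: the paper's rearrangement divides by $1-\cos 2\vphi_0$ and then waves away the possible degeneracy with ``$\vphi$ is nonconstant,'' whereas your formula $\vp'''(r_0) = -4(\vphi_0-\sin\vphi_0\cos\vphi_0)\sin^2\vphi_0 /r_0^5$ avoids that division and you explicitly rule out $\sin\vphi_0=0$ via Picard--Lindel\"of uniqueness against the constant solution $k\pi$, which is the correct and complete justification.
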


\begin{proof}
	Let $\vphi_0 = \vp(r_0)$. By \eqref{e00} and our assumptions, we have 
	\begin{align}\label{e04}%%%%%%%%%%%%%%%%%%%
	0 = \frac{\sin 2 \vphi_0}{r_0^2} + \frac{(\varphi_0 - \sin \vphi_0 \cos \vphi_0)(1 - \cos 2 \vphi_0)}{r_0^4}.
	\end{align}
	From \eqref{e04}, we conclude that 
	\begin{align*}
	r_0^{-2} = -\sin 2 \vphi_0 (\varphi_0 - \sin \vphi_0 \cos \vphi_0)^{-1}(1 - \cos 2 \vphi_0)^{-1}. 
	\end{align*}
	If we differentiate \eqref{e00} and evaluate at $r_0$, we conclude that 
	\begin{align*}
	\vp'''(r_0) &= -\frac{1}{r_0} 
	\Bigl ( \frac{2\sin 2 \vphi_0}{r_0^2} + \frac{4(\varphi_0 - \sin \vphi_0 \cos \vphi_0)(1 - \cos 2 \vphi_0)}{r_0^4} \Bigr ) \\
	&= -\frac{1}{r_0} 
	\Bigl ( -2 (\sin 2 \vphi_0)^2(\varphi_0 - \sin \vphi_0 \cos \vphi_0)^{-1}(1 - \cos 2 \vphi_0)^{-1} \\ &\:\:+ 
	4 (\sin 2 \vphi_0)^2(\varphi_0 - \sin \vphi_0 \cos \vphi_0)^{-1}(1 - \cos 2 \vphi_0)^{-1} \Bigr ) \\
	&= -2 r_0^{-1} (\sin 2 \vphi_0)^2(\varphi_0 - \sin \vphi_0 \cos \vphi_0)^{-1}(1 - \cos 2 \vphi_0)^{-1} \\
	&< 0.  
	\end{align*}
	The final inequality above follows from the fact that $\vphi$ is nonconstant.  Since $\vp''(r_0) = 0$, we conclude that $\vp''(r) > 0$ for $r < r_0$, close to $r_0$, and 
	$\vp''(r) < 0$ for $r > r_0$, close to $r_0$.  This implies that there exists 
	$\delta > 0$ such that $\vp'(r) < 0$ for all $r$ with $0 < |r - r_0| < \de$. 
\end{proof}

\begin{cor}\label{c07b}
	Let $\vp$ be a solution to \eqref{e00}, and suppose that $\vp'(r_0) = 0$.  Then $\vp$ cannot be nondecreasing in a neighborhood 
	of $r_0$.  
\end{cor}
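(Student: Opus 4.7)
The plan is to run a simple case analysis on the sign of $\vp''(r_0)$, using Lemma~\ref{l07a} to handle the degenerate case. The point is that under \eqref{e00} the only way $\vp$ can fail to decrease locally around a critical point is by being constant, which \eqref{e00} forbids for a nonconstant solution.

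First I would split into three cases depending on the value of $\vp''(r_0)$. If $\vp''(r_0) < 0$, then $r_0$ is a strict local maximum of $\vp$, so $\vp'(r) < 0$ for $r > r_0$ close to $r_0$, and therefore $\vp$ is strictly decreasing immediately to the right of $r_0$; in particular $\vp$ cannot be nondecreasing on any neighborhood of $r_0$. If $\vp''(r_0) > 0$, the same argument applied on the left gives $\vp'(r) < 0$ for $r < r_0$ close to $r_0$, so again $\vp$ is strictly decreasing on one side of $r_0$.

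The remaining case $\vp''(r_0) = 0$ is handled directly by Lemma~\ref{l07a}, which produces $\delta > 0$ such that $\vp'(r) < 0$ for $0 < |r - r_0| < \delta$. Thus $\vp$ is strictly decreasing on a punctured neighborhood of $r_0$, so certainly not nondecreasing on any neighborhood.

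I do not anticipate a genuine obstacle here, since the only subtle case $(\vp'(r_0)=\vp''(r_0)=0)$ has already been dispatched by Lemma~\ref{l07a} via the explicit computation of $\vp'''(r_0) < 0$ obtained from the ODE \eqref{e00}. The rest is just the elementary observation that a smooth function with $\vp'(r_0)=0$ and $\vp''(r_0) \neq 0$ has a strict local extremum at $r_0$, and a function with a strict local extremum is not nondecreasing in any neighborhood of that point.
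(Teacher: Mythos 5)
Your proof is correct and takes essentially the same approach as the paper: both reduce to the degenerate case $\vp''(r_0)=0$ and apply Lemma~\ref{l07a}. The paper reaches that case more quickly by contradiction (nondecreasing near $r_0$ plus $\vp'(r_0)=0$ forces $\vp''(r_0)=0$), whereas you enumerate all three signs of $\vp''(r_0)$; this is only a cosmetic difference.
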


\begin{proof}
	If our conclusion was false, then the assumed monotonicity of $\vp$ and $\vp'(r_0) = 0$ imply that $\vp''(r_0) = 0$.  By Lemma 
	\ref{l07a}, we conclude that $\vp'(r)< 0$ for all $r > r_0$ close to $r_0$, a contradiction.  
\end{proof}

\begin{lem}\label{l03}%%%%%%%%%%%%%%%%%%%%%%
	Let $\vphi$ be a solution to \eqref{e00}.  Then the set of critical points of $\vphi$ is finite.   
\end{lem}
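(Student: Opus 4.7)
My plan is to show that the critical set is both contained in a compact subset of $(0,\infty)$ and consists entirely of isolated points, so that finiteness follows immediately.

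First, I would use the expansions in Lemma \ref{l05a} to localize the critical set. Near $r=0$, since $\vphi(r) = \beta r + O(r^3)$ with $\beta > 0$ (positivity of $\beta$ follows from Lemma \ref{l05}, which forces $\vphi > 0$ on a right neighborhood of $0$), the bound $\vphi'(r) = \beta + O(r^2)$ yields $\vphi'(r) > 0$ for $r \in (0, r_1]$ for some small $r_1 > 0$. Near $r=\infty$, the expansion $\vphi(r) = n\pi - \al r^{-2} + O(r^{-6})$ with $\al > 0$ (again forced by Lemma \ref{l05}) gives $\vphi'(r) = 2\al r^{-3} + O(r^{-7}) > 0$ for all $r \geq r_2$ for some large $r_2$. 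Hence all critical points of $\vphi$ lie in the compact interval $[r_1, r_2]$.

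Next I would show that every critical point $r_0 \in (0,\infty)$ is isolated, proceeding by cases on $\vphi''(r_0)$. If $\vphi''(r_0) \neq 0$, then $\vphi'$ has a simple zero at $r_0$ and hence does not vanish in some punctured neighborhood of $r_0$. If instead $\vphi'(r_0) = \vphi''(r_0) = 0$, then Lemma \ref{l07a} applies directly and yields $\delta > 0$ such that $\vphi'(r) < 0$ for $0 < |r - r_0| < \delta$, so once again $r_0$ is the only zero of $\vphi'$ in $(r_0 - \delta, r_0 + \delta)$. Either way, $r_0$ is isolated.

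Combining the two steps, the critical set is a discrete subset of the compact interval $[r_1, r_2]$, and therefore finite. The only point requiring care is the positivity of the leading coefficients $\al, \beta$ in the asymptotic expansions; this is where I would lean on Lemma \ref{l05} together with the fact that $\vphi$ is a nonconstant solution.
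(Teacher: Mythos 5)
Your proof is correct and follows essentially the same strategy as the paper: both arguments localize the critical set to a compact interval using Lemma~\ref{l05a} (with positivity of $\al,\beta$ via Lemma~\ref{l05} and nonconstancy), and both establish that every critical point is isolated by invoking Lemma~\ref{l07a} at degenerate critical points. The only cosmetic difference is that the paper phrases the isolation step as a proof by contradiction (extracting an accumulation point from an assumed infinite critical set and applying the mean value theorem to get $\vp'=\vp''=0$ there), while you argue isolation directly by splitting on whether $\vp''(r_0)$ vanishes; both versions are sound.
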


\begin{proof}
	We denote the set of critical points of $\vp$ by  
	$C = \{ r \geq 0 : \vp'(r) = 0 \}$.  By Lemma \ref{l05a} and Lemma \ref{l05} there 
	exists $R > 0$ large so that
	\begin{align*} 
	\vp'(r) > 0, \quad \forall r \in [0,R^{-1}] \cup [R,\infty).
	\end{align*}
	Hence $C$ is a compact set.  If $C$ is infinite, then there exists
	a monotonic increasing or decreasing sequence $(r_n) \subseteq C$ and $r_0 \in C$ such that $r_n \rar_n r_0$.  By the 
	mean value theorem and continuity this implies that at $r_0$
	\begin{align*}
	\vphi'(r_0) = \vphi''(r_0) = 0.  
	\end{align*}
	By Lemma \ref{l07a} there exists 
	$\delta > 0$ such that $C \cap (r_0 - \delta, r_0 + \delta) = \{ r_0 \}$, a contradiction to the definition of $r_0$.  Thus, 
	$C$ is finite.  
\end{proof}

We will now show that every solution $\vphi$ to \eqref{e00} has positive 
derivative on $[0,\infty)$.

\begin{lem}\label{l07}%%%%%%%%%%%%%%%%
	Let $\vphi$ be a solution to \eqref{e00}.  Then $\vphi'(r) > 0$ for all $r \geq 0$.  
\end{lem}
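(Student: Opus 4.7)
The plan is to argue by contradiction. If $\varphi'$ vanishes somewhere, I will extract two monotonic branches of $\varphi$, pass to the $(p,\varphi)$-formulation used in Lemma~\ref{l06}, and show that the difference of their squared $p$-functions must be both strictly decreasing from $0$ and yet take a strictly positive value at its right endpoint—a contradiction.

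The first task is to locate two useful monotonic branches. The asymptotic expansions of Lemma~\ref{l05a} together with the global bound $0 < \varphi < n\pi$ from Lemma~\ref{l05} force $\varphi' > 0$ in neighborhoods of $r = 0$ and $r = \infty$ (otherwise $\varphi$ would exit its allowed range). Combined with Lemma~\ref{l03} and Corollary~\ref{c07b}, any zeros of $\varphi'$ form a nonempty finite subset of $(0,\infty)$ that alternates strictly between local maxima and local minima. Let $r^*$ be the largest critical point, necessarily a local minimum (since $\varphi'>0$ on $(r^*,\infty)$), and let $r^{**} < r^*$ be the preceding critical point, necessarily a local maximum. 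Set $m := \varphi(r^*)$ and $M := \varphi(r^{**})$; by Lemma~\ref{l05}, $0 < m < M < n\pi$.

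Next, following the change of variables in Lemma~\ref{l06}, I pass to $x = \log r$ and $p = d\varphi/dx = r\varphi'$, and on each strictly monotonic piece invert $\varphi$ and treat $x,p$ as functions of $\varphi$. The decreasing interval $(r^{**}, r^*)$ yields $\tilde x(\varphi)$ and $\tilde p(\varphi) < 0$ for $\varphi \in (m,M)$; the increasing tail $(r^*, \infty)$ yields $x_\infty(\varphi)$ and $p_\infty(\varphi) > 0$ for $\varphi \in (m, n\pi)$. On the overlap $\varphi \in (m,M)$, $\tilde x(\varphi) < \log r^* < x_\infty(\varphi)$. Rewriting the $(p,\varphi)$-ODE of Lemma~\ref{l06} as $\tfrac{1}{2}(p^2)'_\varphi + p = \sin 2\varphi + e^{-2x}(\varphi - \sin\varphi\cos\varphi)(1 - \cos 2\varphi)$, applying it to both branches, and subtracting, I set $F := p_\infty^2 - \tilde p^2$ and $u := p_\infty - \tilde p$ to obtain
\begin{align*}
\tfrac{1}{2}F'(\varphi) + u(\varphi) = \bigl(e^{-2x_\infty(\varphi)} - e^{-2\tilde x(\varphi)}\bigr)(\varphi - \sin\varphi\cos\varphi)(1 - \cos 2\varphi).
\end{align*}
On $(m,M)$, $u > 0$ strictly (as $p_\infty > 0 > \tilde p$) and the right-hand side is $\le 0$ (since $x_\infty > \tilde x$ and the remaining factors are nonnegative on $(0,n\pi)$), so $F' < 0$ strictly. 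A second-order Taylor expansion of $\varphi$ at the common minimum $r^*$ gives $p_\infty^2, \tilde p^2 = 2(r^*)^2\varphi''(r^*)(\varphi - m) + o(\varphi - m)$ with identical leading coefficients from both sides, so $F(m^+) = 0$, whence $F < 0$ on all of $(m,M)$. However, as $\varphi \to M^-$, $\tilde p$ vanishes at the local maximum $r^{**}$ while $p_\infty$ extends continuously to the interior point $M \in (m, n\pi)$ with $p_\infty(M) > 0$, so $\lim_{\varphi \to M^-} F(\varphi) = p_\infty(M)^2 > 0$, contradicting $F < 0$ on $(m,M)$.

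The main technical point to handle carefully is the matching $F(m^+) = 0$: the two branches meet tangentially at $r^*$, and one must verify that the leading $\sqrt{\varphi - m}$ behaviors of $p_\infty$ and $|\tilde p|$ share the \emph{same} coefficient, so that the cancellation occurs at the level of $F$ and not merely of $u$. This in turn hinges on $\varphi''(r^*) > 0$ strictly, which Lemma~\ref{l07a} supplies: the degenerate case $\varphi'(r^*) = \varphi''(r^*) = 0$ is excluded because it would force $\varphi' < 0$ on both sides of $r^*$, incompatible with the monotonic increase on $(r^*, \infty)$.
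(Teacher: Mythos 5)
Your argument is correct and takes a genuinely different route from the paper's. The paper proceeds in two stages: an induction over the critical points $x_N < \cdots < x_1$ establishing $\varphi(x) \ge \varphi(x_1)$ on $[x_N, x_1]$ (with the comparison technique of Lemma~\ref{l06} invoked at the inductive step), followed by a final comparison of the two \emph{increasing} branches $\varphi|_{(-\infty,x_N)}$ and $\varphi|_{(x_1,\infty)}$ over their common $\varphi$-range. You instead compare the decreasing branch on $(r^{**},r^*)$ against the increasing tail on $(r^*,\infty)$ directly, over the common range $(m,M)$, and work with the squared-speed $F=p_\infty^2-\tilde p^2$ rather than the difference $p_\infty-\tilde p$. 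The squaring is the key move: since $\tilde p<0<p_\infty$, the tangential cancellation at the shared minimum occurs only at the level of $F$, and you correctly pin this on $\varphi''(r^*)>0$, which Lemma~\ref{l07a} supplies. Two further simplifications fall out of your setup: the ordering $\tilde x(\varphi)<\log r^*<x_\infty(\varphi)$ holds automatically on all of $(m,M)$, so no continuity/bootstrap argument is needed (unlike the persistence argument behind \eqref{e08}), and $F'<0$ follows from a single sign observation. One imprecision worth flagging, though it does not affect the argument: Lemma~\ref{l07a} permits degenerate critical points (inflections on decreasing segments), so the critical set need not alternate strictly between local maxima and local minima, and $r^{**}$ need not be a local maximum. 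But you only actually use that $\varphi'<0$ on $(r^{**},r^*)$ and $\varphi'(r^{**})=0$, both of which hold regardless. Net effect: a shorter and more direct proof that entirely sidesteps the induction.
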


\begin{proof}
	As in the proof of Lemma \ref{l06} we first make the change of variables $x = \log r$, so that $\vp$ satisfies
	\begin{align*}
	&\vphi'' + \vphi' = \sin 2 \vphi + e^{-2x} (\vphi - \sin \vphi \cos \vphi)(1 - \cos 2 \vphi), \\
	&\vphi(-\infty) = 0, \quad \vphi(\infty) = n\pi,  
	\end{align*}
	where $\vp' = \frac{d\vp}{dx}$.  Suppose that our conclusion is false and that there exists $x_0 \in \R$ such that 
	$\vp'(x_0) = 0$.  By Lemma \ref{l03}, the set 
	of critical points $C = \{ x \in \R : \vp'(x) = 0 \}$ is finite and nonempty by assumption.  We label these points by 
	\begin{align*}
	x_N < \cdots < x_2 < x_1.
	\end{align*}
	We denote the value of $\vp$ at these points by $\vp_j = \vp(x_j)$.  Note that by Lemma \ref{l05}, we have 
	$\vp_j \in (0,n\pi)$ for all $1 \leq j \leq N$.  
	
	We claim that 
	\begin{align}\label{e11}%%%%%%%%%%%%%%%%%%%%%
	\forall x\in [x_N,x_1], \quad \vp(x) \geq \vp_1.
	\end{align}
	In fact, we will prove that for all $1 \leq j \leq N-1$ 
	\begin{align}\label{e12b}%%%%%%%%%%%%%%%%%%%
	\forall x \in (x_{j+1}, x_j), \quad \vp(x) > \vp_1. 
	\end{align}
	The proof is by induction on $j$.  
	By Lemma \ref{l05} and our definition of $x_1$, we see that 
	\begin{align}\label{e13}%%%%%%%%%%%%%%%
	\vp \mbox{ is increasing on } (x_1,\infty). 
	\end{align}
	By Lemma \ref{c07b}, we must have that $\vp'(x) < 0$ for all $x \in (x_2,x_1)$.  Hence $\vp(x) > \vp_1$ for all 
	$x \in (x_2,x_1)$ which proves the base case. Suppose that $\eqref{e11}$ holds for all $1 \leq j \leq N-2$.  We wish to show that 
	\eqref{e12b} holds for $j + 1$.
	There are two cases to 
	consider: $\vp'(x) < 0$ on $(x_{j+2}, x_{j+1})$, or $\vp'(x) > 0$ on $(x_{j+2},x_{j+1})$. 
	
	\emph{Case 1:} $\vp'(x) < 0$ on $(x_{j+2}, x_{j+1})$.  By our induction hypothesis and the fact that 
	$\vp$ is strictly decreasing on $[x_{j+2},x_{j+1})$ we conclude that 
	\begin{align}\label{e12}
	\quad \forall x \in [x_{j+1}, x_j), \quad \vp(x) > \vp_{j+1} \geq \vp_1.
	\end{align}
	This concludes the argument for this case. 
	
	\emph{Case 2:} $\vp'(x) > 0$ on $(x_{j+2}, x_{j+1})$.  For this case we use arguments from Lemma \ref{l06}. We first note that 
	by Lemma \ref{c07b}, we must have that $\vp'(x) < 0$ on $(x_{j+1}, x_{j})$.  In particular, by our induction 
	hypothesis and \eqref{e12}, 
	we have that 
	\begin{align*}   
	\vp_{j+1} > \vp_1.
	\end{align*}
	Suppose, towards a contradiction, that $\vp_{j+2} < \vp_1$.  Let $a \in (x_{j+2}, x_{j+1})$ be such that 
	$\vp(a) = \vp_1$, and denote $b = x_{j+1}$.  Let $d \in (x_1, \infty)$ be such that 
	$\vp(d) = \vp_{j+1}$, and denote $c = x_1$.    By the previous definitions, we have 
	that 
	\begin{align}
	\begin{split}\label{e14}%%%%%%%%%%%%%%%%%%%%%%%%%
	&\forall x \in [a,b) \cup (c,d), \quad \vp'(x) > 0,  \\
	&0 = \vp'(b) < \vp'(d), \\
	&a< b < c < d. 
	\end{split}
	\end{align}
	
	We now consider $\vp \in (\vp_0,\vp_{j+1})$ as the dependent variable and define two functions
	\begin{align*}
	x &= \bigl ( \vp|_{x \in (a,b)}\bigr )^{-1} : (\vp_1, \vp_{j+1}) \rar (a,b), \\
	y &= \bigl (\vp|_{y \in (c,d)}\bigr )^{-1} : (\vp_1, \vp_{j+1}) \rar (c, d).
	\end{align*}
	Let 
	\begin{align*}
	p = \Bigl (\frac{dx}{d\vp} \Bigr )^{-1} = \frac{d\vp}{dx} \Bigl |_{x \in (a,b)}, \\
	q = \Bigl (\frac{dy}{d\vp} \Bigr )^{-1} = \frac{d\vp}{dy} \Bigl |_{y \in (c,d)}. 
	\end{align*}
	Then \eqref{e14} implies that  
	\begin{align}
	\begin{split} \label{e15}%%%%%%%%%
	&\forall \vp \in (\vp_1,\vp_{j+1}), \quad p,q > 0, \\
	&\mbox{$\forall \vp < \vp_{j+1}$ close to $\vp_{j+1}$}, \quad p(\vp) < q(\vp), \\
	&\forall \vp \in (\vp_1, \vp_{j+1}), \quad x(\vp) < y(\vp).
	\end{split}
	\end{align}
	By the argument used to obtain \eqref{e08}, we conclude that $p(\vp) < q(\vp)$ for all $
	\vp \in (\vp_1,\vp_{j+1})$. Taking the limit as $\vp \rar \vp_1$ and translating back to the old variables
	yield
	\begin{align}\label{e16}%%%%%%%%%%%%%%%%%
	\vp'(a) \leq \vp'(c) = 0, 
	\end{align}
	which contradicts \eqref{e14}.  Thus, $\vp_{j+2} \geq \vp_1$.  This concludes the proof for this case.  By induction, 
	we have proved \eqref{e12b} and also \eqref{e11}.  
	
	By definition, $\vp'(x) > 0$ on $(-\infty,x_1)$ and satisfies $\vp'(-\infty) = 0$.  By Lemma 
	\ref{c07b}, we must have $\vp'(x) < 0$ on $(x_N, x_{N-1})$.  In particular, by \eqref{e11}, we must have 
	$\vp_N = \vp(x_{N}) > \vp_1$.  Let $a < x_N$ be so that $\vp(a) = \vp_1$, and denote $b = x_N$.  Let 
	$d > x_1$ be so that $\vp(d) = \vp_N$, and denote $c = x_1$.  Then by definition, we have that    
	\begin{align}
	\begin{split}\label{e17}%%%%%%%%%%%%%%%%%%%%%%%%%
	&\forall x \in (a,b) \cup (c,d), \quad \vp'(x) > 0,  \\
	&0 = \vp'(b) < \vp'(d), \\
	&a< b < c < d. 
	\end{split}
	\end{align}
	We can repeat the proof of Case 2 in the previous induction argument to conclude that 
	\begin{align*}
	\vp'(a) = \vp'(c) = 0,
	\end{align*}
	a contradiction to the fact that $a < x_N$ and $x_N$ is the least critical point.  Thus, the set of critical points must by 
	empty. 
	
\end{proof}

An immediate corollary of Lemma \ref{l07} and Lemma \ref{l06} is the uniqueness of the stationary Adkins--Nappi map of degree $n$. 

\begin{cor}\label{l08}%%%%%%%%%%%%%%%%%%%
	The solution obtained in Proposition \ref{p01}, which we denote by $\gs_n$, is the unique solution to \eqref{e00}.  
\end{cor}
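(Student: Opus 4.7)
The plan is to obtain Corollary \ref{l08} as an immediate synthesis of the two structural results already in hand: the monotonicity property (Lemma \ref{l07}), which asserts that \emph{every} finite energy solution $\varphi$ to \eqref{e00} satisfies $\varphi'(r) > 0$ for all $r\geq 0$, and the conditional uniqueness statement (Lemma \ref{l06}), which asserts that two solutions to \eqref{e00} with everywhere-positive radial derivative must coincide.

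First, I would appeal to Corollary \ref{c02}: the minimizer $Q_n$ produced by the variational argument of Proposition \ref{p01} is a smooth solution to the Euler--Lagrange equation \eqref{e00}. Applying Lemma \ref{l07} to $Q_n$ then yields $Q_n'(r) > 0$ for every $r\geq 0$, so $Q_n$ lies in the class covered by Lemma \ref{l06}.

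Next, let $\widetilde{\varphi}$ be an arbitrary solution of \eqref{e00}. A second application of Lemma \ref{l07} gives $\widetilde{\varphi}'(r) > 0$ for all $r\geq 0$. At this point both $Q_n$ and $\widetilde{\varphi}$ are solutions to \eqref{e00} with strictly positive derivative on $[0,\infty)$, so Lemma \ref{l06} applies with $\varphi_1 = Q_n$ and $\varphi_2 = \widetilde{\varphi}$ and forces $\widetilde{\varphi} = Q_n$. The asymptotic expansions at $0$ and $\infty$, along with the positivity of the coefficients $\alpha_n, \beta_n$, then follow from Lemmas \ref{l05a} and \ref{l05} applied to $Q_n$.

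There is no genuine obstacle to be overcome at this stage: all of the difficulty was concentrated in establishing Lemma \ref{l07} (the monotonicity, proved by the delicate induction across critical points using the $p$--$q$ comparison argument from Lemma \ref{l06}) and in the phase-plane comparison of Lemma \ref{l06} itself. The corollary is simply the logical merge of these two facts with the minimizer from Proposition \ref{p01}.
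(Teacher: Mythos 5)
Your proof is correct and takes exactly the same route as the paper: the text preceding Corollary~\ref{l08} states that it is an immediate consequence of Lemma~\ref{l07} (every solution of \eqref{e00} has strictly positive derivative) together with Lemma~\ref{l06} (any two solutions with positive derivative coincide), which is precisely the synthesis you carry out.
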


%%%%%%%%%%%%%%%%%%%%%%%%%%%%%%%%%%%%%%%%%%%%%%%%%%%%%%%%%%%%%%%%%%
%%%%%%%%%%%%%%%%%%%%%%%%%%%%%%%%%%%%%%%%%%%%%%%%%%%%%%%%%%%%%%%%%%

\section{Strichartz Estimates}

%%%%%%%%%%%%%%%%%%%%%%%%%%%%%%%%%%%%%%%%%%%%%%%%%%%%%%%%%%%%%%%%%%%%%
%%%%%%%%%%%%%%%%%%%%%%%%%%%%%%%%%%%%%%%%%%%%%%%%%%%%%%%%%%%%%%%%%%%%%

The main goal in this section is to establish Strichartz estimates for the linear inhomogeneous wave equation obtained from~\eqref{eq:an} by linearization about the stationary Adkins-Nappi map $Q = Q_n$. These linear estimates will be the foundation of our proof in Section~\ref{s:SD} that each $Q_n$ is unconditionally  asymptotically stable in the critical space $\HH_n$. 

We first make use of the additional dispersion  present in the equivariant setting by reformulating~\eqref{eq:an} as a semilinear wave equation in $\R^{1+5}$.

\subsection{$5d$ reduction: A reformulation of Theorem~\ref{t:main}}

It is convenient to write solutions $\vec \psi(t) \in \E_n$ to~\eqref{eq:an} as  perturbations of $Q = Q_n$ and then pass to a $5d$ re-formulation of~\eqref{eq:an} via the ansatz $\psi = Q + r u$.  This reduction is motivated by the presence
of a strong repulsive term in the potential which arises when linearizing about $Q$: 
\begin{align*}
\frac{2 \cos 2Q}{r^2} + \frac{1 - 2 \cos 2Q + 2Q \sin 2Q + \cos 4Q}{r^4} = \frac{2}{r^2} + O( \ang r ^{-6}). 
\end{align*}
The above asymptotics follow in a simple fashion from Theorem \ref{t01}.  The strong repulsive term $\frac{2}{r^2}$ indicates that the linearized operator about $Q$ has the same dispersion as the free
wave equation on $\R^{1+5}$.  Indeed, with $u(t,r)$ defined by $\psi(t,r) = Q(r) + r u(t,r)$ we see that $u$ satisfies
the $5d$ semilinear wave equation
\begin{align}
\begin{split}\label{e31}%%%%%%%%%%%%%%%%%%%%%%%%%%%%%%%%%%%%%%%%%%%%%%%%
&u_{tt}- u_{rr} - \frac{4}{r} u_r + V(r) u + Z(r,u) = 0, \\
&\vec u(0) = (u_0,u_1) \in \HH,
\end{split}
\end{align}
where
\EQ{ \label{eq:HHdef} 
	\HH := (\dot H^2 \times \dot H^1) \cap (\dot H^1 \times L^2)(\R^5). 
}
The potential $V$ is given by 
\begin{align}\label{e32}%%%%%%%%%%%%%%%%%%%%%%%%%%%%%%%%%%%%%%%%%%%%%%%%
V(r) = \frac{2(\cos 2Q - 1)}{r^2} + \frac{1 - 2 \cos 2Q + 2Q \sin 2Q + \cos 4Q}{r^4}, 
\end{align}
and the nonlinearity is given by 
\begin{align}\label{e312b}
Z(r,u) = u^2 Z_2(r) + u^3 Z_3(r,ru) + u^4 Z_4(r,ru) + u^5 Z_5(ru)
\end{align}
where 
\begin{align}
Z_2(r) &= r^{-1} \sin 2Q + 2 r^{-3} \bigl ( 9 \sin 2Q + Q \cos 2Q - 3Q - 4 \sin 4Q \bigr ), \label{e33}
\end{align}
\begin{align}
\begin{split}\label{e34}%%%%%%%%%%%%%%%%%%%%%%%%%%%%%%%%%%%%%%
Z_3(r,\vp) &= \cos 2Q \left ( \frac{\sin 2\vp - 2\vp}{\vp^3} \right )
+ \frac{\cos 2Q - 1}{2r^2} \left ( \frac{1 - \cos 2 \vp}{\vp^2} \right ) \\
&\:\: + \frac{2 Q \sin 2Q + 1 - \cos 2Q}{2r^2} \left ( \frac{\sin 2\vp - 2\vp}{\vp^3} \right ) \\
&\:\: + \frac{\cos 4Q - 1}{4r^2} \left ( \frac{\sin 4\vp - 4\vp}{\vp^3} \right ),
\end{split}
\end{align}
\begin{align}
\begin{split}\label{e35}%%%%%%%%%%%%%%%%%%%%%%%%%%%%%%%%%
Z_4(r,\vp) &= r \sin 2Q \left ( \frac{\cos 2\vp - 1 + 2\vp^2}{\vp^4} \right ) \\
&\:\: + \frac{\sin 4Q - 2 \sin 2Q}{4r} \left ( \frac{\cos 4\vp - 1 + 8 \vp^2}{\vp^4} \right ) \\
&\:\: + \frac{Q ( \cos 2 Q - 1)}{r} \left ( \frac{1 - \cos 2 \vp - 2 \vp^2}{\vp^4} \right ) \\
&\:\: + \frac{\sin 2Q - 2Q}{r} \left ( \frac{\sin 2 \vp - 2 \vp}{\vp^3} \right ) \\
&\:\: + \frac{Q}{r} \left ( \frac{1 - \cos 2 \vp + 2 \vp \sin 2\vp + \cos 4 \vp}{\vp^4} \right ),
\end{split}
\end{align}
\begin{align}
Z_5(\vp) &= \frac{\vp - \frac{\sin 2 \vp}{2} - \vp \cos 2 \vp + \frac{\sin 4 \vp}{4}}{\vp^5}. \label{e36}%%%%%%%%%%%%%%%%
\end{align}
Using Theorem \ref{t01}, we see that we have the following elementary estimates on the potential and nonlinearity: 
\begin{align}\label{pot bound}
|V(r)| \lesssim \LR{r}^{-6}, 
\end{align}
as well as 
\begin{align}
\begin{split}\label{Z bounds}
\bigl | u^2 Z_2(r) \bigr | &\lesssim \LR{r}^{-3} |u|^2, \\
\bigl | u^3 Z_3(r,ru) \bigr | &\lesssim |u|^3, \\
\bigl | u^4 Z_4(r,ru) \bigr | &\lesssim \LR{r}^{-1} |u|^4, \\
\bigl | u^5 Z_5(ru) \bigr | &\lesssim |u|^5.
\end{split}
\end{align}

We claim that studying solutions $\vec \psi(t) \in \HH_n$ to \eqref{eq:an} is equivalent to studying the Cauchy problem \eqref{e31} with $\vec u(t) \in \HH$ where the relationship between the two is given by $\psi = Q + r u$.  Indeed, if $\vec \psi \in \HH_n$, we define $\vphi(t,r) = \psi(t,r) - Q(r)$ so that $\vphi(t,r) = r u(t,r)$. Then using Hardy's inequality and the relations
\begin{align*}
\vp_r &= ru_r + u, \\
\vp_{rr} &= r u_{rr} + 2 u_r, 
\end{align*}
once can show that for $s = 1,2$, the map 
\begin{align*}
\dot H^{s}_{rad}(\R^5) \ni u \mapsto \vphi := ru \in \dot H^s_{rad}(\R^3)
\end{align*}
is an isomorphism.  Thus, we conclude that 
\begin{align*}
\| \vec \psi \|_{\HH_n} = \| \vec \varphi \|_{(\dot H^2 \times \dot H^1)\cap (\dot H^1 \times L^2)(\R^3)} \simeq \| \vec u \|_{(\dot H^2 \times \dot H^1)
	\cap (\dot H^1 \times L^2)(\R^5)}
\end{align*}
which shows the two Cauchy problems are equivalent.

In the remainder of the paper we will work exclusively in the $5d$ 
$u$--formulation. We will first reformulate our main result, Theorem~\ref{t:main}(b).

\begin{thm}\label{t31}
	Let $(u_0,u_1) \in \cl H = (\dot H^2 \times \dot H^1) \cap (\dot H^1 \times \dot L^2)(\R^5)$.  Then there exists a unique 
	solution $\vec u(t) \in C(I_{\max}; \cl H)$ to \eqref{e31} with initial data $\vec u(0) = (u_0,u_1)$, defined on a 
	maximal interval of existence $I_{\max} = (T_-(u), T_+(u) \ni 0$.  If in addition, we assume that 
	\begin{align} \label{eq:cond} 
	\sup_{t \in [0, T_+(u))} \| \vec u(t) \|_{\cl H} < \infty, 
	\end{align}
	then $T_+ = \infty$.  Moreover, $\vec u(t)$ scatters to a free wave in $\cl H$ as $t \rar \infty$, i.e. there exists a radial solution 
	$\vec v(t) \in \cl H$ to the free wave equation on $\R^{1+5}$
	\begin{align}\label{e37}%%%%%%%%%%%%%%%%%%%%%%%%%%%%%%%%
	v_{tt} - v_{rr} - \frac{4}{r} v_r = 0, 
	\end{align}
	such that 
	\begin{align*}
	\| \vec u(t) - \vec v(t) \|_{\cl H} \rar 0 \mbox{ as } t \rar \infty. 
	\end{align*}
	A similar statement holds for negative times as well. 
\end{thm}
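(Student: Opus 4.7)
The argument has three pillars. The starting point is the pair of Strichartz estimates for the radial $5d$ linear operator $\p_t^2 - \Dlt + V$ established earlier in this section. With these in hand I would run a standard contraction argument in a mixed energy/Strichartz norm of the form
\[
\|\vec u\|_{L^\I_t \cl H} + \|u\|_{L^3_t L^6_x} + \text{(lower order Strichartz norms)},
\]
using the nonlinearity bounds~\eqref{Z bounds} together with $|V|\lesssim \LR r^{-6}$. The quintic term $u^5 Z_5(ru)$ is the scaling-critical nonlinearity on $\R^{1+5}$ and drives the choice of critical Strichartz space, while the lower-order terms come with localizing weights $\LR r^{-1}$, $\LR r^{-3}$, $\LR r^{-6}$ and decaying factors involving $Q$ (by Theorem~\ref{t01}), so they behave subcritically. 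The contraction yields a unique maximal solution $\vec u \in C(I_{\max}; \cl H)$, together with a blow-up alternative keyed to a critical Strichartz norm, as well as small data scattering.

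Next I would prove a long-time perturbation (stability) lemma: any $\cl H$-solution sufficiently close to a reference solution with finite critical Strichartz norm on $[0,T)$ inherits the same property, with quantitative control. The proof is by now standard; partition $[0,T)$ into finitely many subintervals on each of which the reference has small Strichartz norm and iterate the local contraction. An immediate corollary is that any $\cl H$-solution whose critical Strichartz norm is finite on its forward maximal interval must extend globally and scatter to a free $5d$ wave solving~\eqref{e37}. The theorem thus reduces to upgrading the hypothesis~\eqref{eq:cond} that $\sup_{t \in [0,T_+)} \|\vec u(t)\|_{\cl H} < \I$ to finiteness of a critical Strichartz norm on $[0,T_+)$.

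Suppose this upgrade fails. A Bahouri--G\'erard-type profile decomposition for the linear flow $\p_t^2 - \Dlt + V$ in $\cl H$, combined with the long-time perturbation lemma, produces a nonzero \emph{critical element} $\vec u_\I \in C(\R; \cl H)$ whose trajectory $\cl K = \{\vec u_\I(t) : t \in \R\}$ is precompact in $\cl H$. The rigidity step then aims to show $\vec u_\I \equiv 0$, contradicting non-scattering. For this I would translate back to the original azimuth angle via $\psi_\I(t,r) = Q_n(r) + r u_\I(t,r)$ so that $\vec \psi_\I(t)$ is a $\HH_n$-solution of~\eqref{eq:an} with precompact orbit, and then invoke the exterior energy lower bound for the free radial $5d$ wave equation (Proposition~\ref{linear prop} of~\cite{KLS}) in the channels-of-energy spirit of~\cite{DKM4, DKM5, KLS, L15} to force the compact-orbit solution to be stationary. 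The uniqueness of the finite-energy stationary solution in $\HH_n$, established in Corollary~\ref{l08}, then forces $\vec \psi_\I = \vec Q_n$, i.e.\ $\vec u_\I = 0$. Time reversal yields the $t \to -\I$ statement. The main obstacle is this rigidity step: the $\cl H$-norm is strictly above the conserved energy, so no virial or Morawetz identity is available, and the channels-of-energy argument must be run around the non-trivial soliton $Q_n$, exploiting its sharp asymptotics and the monotonicity $Q_n' > 0$ from Theorem~\ref{t01} to absorb error terms generated by the potential $V$ and the non-sign-definite nonlinearity $Z$.
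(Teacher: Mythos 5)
Your proposal takes essentially the same route as the paper: Strichartz-based local and small-data theory (Proposition~\ref{p32}), a long-time perturbation lemma (Proposition~\ref{p33}), a profile decomposition adapted to the operator with potential yielding a precompact-orbit critical element (Proposition~\ref{p:cc}), and a channels-of-energy rigidity argument culminating in the uniqueness of $Q_n$ (Proposition~\ref{rigid}). One minor correction: the critical Strichartz norm must carry a spatial derivative (the paper uses $S(I) = L^3_t W^{1,30/7}_x \cap L^5_t W^{1,50/13}_x$), since $L^3_t L^6_x$ is not scaling-invariant for the $\dot H^2$-critical quintic problem in $\R^{1+5}$; also, the rigidity step is run almost entirely in the $u$-variable, with the translation to $\psi$ and the appeal to uniqueness of $Q_n$ arising only in the sub-case $\ell_0 \neq 0$ via the auxiliary stationary map $\vp_{\al_n - \ell_0}$.
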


\subsection{Strichartz Estimates} 

The key element in our proof of the asymptotic stability of the $Q_n$ in Section~\ref{s:SD} are Strichartz estimates for the underlying linear wave equation. From~\eqref{e31} we see  that the relevant inhomogeneous linear equation in this paper is the following $(1+5)$-dimensional wave equation,  
\EQ{ \label{eq:lw} 
	&w_{tt} - \De w + V w = F,  \\
	&\vec w(0) = (w_0, w_1).
}
for radial initial data $\vec w(0)$, radial $F$, and where $V = V(r)$ is given by the explicit formula, 
\EQ{ \label{Vdef}
	V(r) = \frac{2(\cos 2Q - 1)}{r^2} + \frac{1 - 2 \cos 2Q + 2Q \sin 2Q + \cos 4Q}{r^4}. 
}
and satisfies the decay estimates
\EQ{\label{Vdec} 
	\abs{V(r)} \lesssim \ang{r}^{-6}
}
The goal is to show that despite the presence of the potential $V$,  the same family of Strichartz estimates that hold for the linear wave equation in $\R^{1+5}$, i.e., $\Box_{\R^{1+5}} w = F$, hold for ~\eqref{eq:lw} as well. Using a standard argument, the problem can be reduced to establishing localized energy estimates for the free evolution operator $S_V(t)$ generated by the left-hand side of~\eqref{eq:lw}, which in turn hinge on proving spectral properties of the Schr\"odinger operator
\EQ{ \label{eq:HVdef} 
	H_V:= - \De_{\R^5} + V.
}
%We'll state a slightly more general result and then show that our particular potential satisfies the assumptions in the theorem. 
Recall that a triple $(p, q, \gamma)$ is called \emph{admissible} for the wave equation in $\R^{1+5}$  if 
\EQ{
	p>2, q \ge 2, \quad \frac{1}{p} + \frac{5}{q} = \frac{5}{2} - \gamma, \quad \frac{1}{p} + \frac{1}{ q} \le 1
} 
We prove the following results. 
%\begin{thm} \label{t:strich} 
%Let $W \in C^{\infty}( \R^5)$ be a smooth bounded radial function satisfying the estimate $\abs{W(r)} \lesssim \ang{r}^{-6}$. Assume in addition that the spectrum of the Schr\"odinger operator $- \De_{\R^5} + W$ is purely absolutely continuous 
%\end{thm} 
\begin{thm}\label{t:lin} The following hold true. 
	
	\begin{itemize}
		\item[(a)]  \label{t:Hspec} The operator $H_V$ defined in~\eqref{eq:HVdef} for $V = V(r)$ as in~\eqref{Vdef},~\eqref{Vdec} is self-adjoint on $\calD:= H^2(\R^5)$. Its spectrum is given by $[0, \infty)$ and is purely absolutely continuous.  In particular, $H_V$ has no negative spectrum and the threshold energy $0$ is neither an eigenvalue or a resonance.
		\item[(b)] \emph{[Strichartz Estimates]}\label{t:strich} Let $\vec w(t)$ be a solution to~\eqref{eq:lw} defined on a time interval $I \ni 0$ with radial intial data $\vec  w(0) = (w_0, w_1)$. Then for any admissible triples $(p, q, \gamma)$ and $(a, b, \sigma)$ we have 
		\EQ{
			\| \abs{\na}^{- \gamma}  \na_{t, x} w \|_{L^p_t, L^q_x(I)} %+ \| \abs{\na}^{-\ga} \p_t w \|_{L^p_t L^q_x} 
			\lesssim \| (w_0, w_1) \|_{ \dot{H}^{1} \times L^2(\R^5)} + \|  \abs{\na}^\sigma F\|_{L^{a'}_t L^{b'}_x(I)}
		}
	\end{itemize} 
\end{thm}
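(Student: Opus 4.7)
For part (a), self-adjointness of $H_V$ on $\calD = H^2(\R^5)$ is immediate: the estimate $|V(r)| \lesssim \ang{r}^{-6}$ makes $V$ a bounded self-adjoint operator, so $H_V$ is a bounded perturbation of $-\Delta_{\R^5}$ by the Kato--Rellich theorem, and Weyl's theorem gives $\sigma_{\text{ess}}(H_V) = [0,\infty)$. The substantive content of (a) is thus to establish $H_V \geq 0$, the absence of point spectrum in $[0,\infty)$, and the absence of a zero resonance. Non-negativity I would obtain variationally: the substitution $\psi = Q_n + r u$ identifies compactly supported radial perturbations $h = ru$ of $Q_n$, which automatically preserve the topological degree, with compactly supported radial $u \in H^1(\R^5)$. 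A direct computation of $\delta^2 \calJ(Q_n)[ru, ru]$, using the trigonometric identity
\begin{align*}
(1-\cos 2Q)^2 + 2(Q - \sin Q \cos Q) \sin 2Q = 1 - 2\cos 2Q + 2Q \sin 2Q + \cos 4Q,
\end{align*}
shows that this Hessian equals a positive multiple of $\ang{H_V u, u}_{L^2(\R^5)}$; minimality of $Q_n$ from Proposition~\ref{p01} then yields $H_V \geq 0$ by density. Given $H_V \geq 0$ and the short-range decay of $V$, the absence of embedded positive eigenvalues and absolute continuity on $(0,\infty)$ follow from the classical Agmon--Kato limiting absorption principle.

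The delicate step is excluding a zero eigenvalue or threshold resonance. In dimension $d = 5$ a Frobenius analysis of the radial equation $-\phi'' - (4/r)\phi' + V\phi = 0$ shows the decaying solution at infinity behaves like $r^{-3}$, which is automatically square-integrable on $\R^5$; resonances thus coincide with $L^2$ eigenvalues and it suffices to exclude the latter. The key input is the computation, which uses the algebraic identity $V = \partial_\psi F(Q_n, r) - 2/r^2$ underlying the $5d$ reduction together with the ODE satisfied by $Q_n$,
\begin{align*}
H_V Q_n' \;=\; \frac{2\,(Q_n - \sin Q_n \cos Q_n)\,(1 - \cos 2 Q_n)}{r^5}.
\end{align*}
Since $Q_n(r) \in (0, n\pi)$ by Lemma~\ref{l05} and $\psi \mapsto \psi - \sin\psi \cos\psi$ is non-negative and strictly increasing on $[0,\infty)$, the right side is non-negative and strictly positive off an isolated set of radii. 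The asymptotics in Theorem~\ref{t01} furthermore yield $Q_n' > 0$ and $Q_n', H_V Q_n' \in L^2(\R^5)$. If $\phi \in L^2(\R^5)$ were a zero eigenfunction, Kato's inequality combined with the strong maximum principle for $H_V \geq 0$ would allow us to replace $\phi$ by $|\phi|$ and take $\phi > 0$ a.e., and integration by parts against $Q_n'$ would produce the contradiction
\begin{align*}
0 \;=\; \ang{H_V \phi,\, Q_n'}_{L^2(\R^5)} \;=\; \ang{\phi,\, H_V Q_n'}_{L^2(\R^5)} \;>\; 0.
\end{align*}

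For part (b), once the spectral picture in (a) is in hand the Strichartz bounds follow from the now-standard perturbative machinery I would implement in Appendix~A. Absolute continuity together with the absence of a zero resonance yields a limiting absorption principle for $(H_V - (\lambda \pm i0))^{-1}$ with uniform weighted $L^2$ bounds, and these upgrade to the radial Kato-smoothing estimate
\begin{align*}
\big\| \ang{r}^{-\tfrac12 - \eps} \, e^{\pm i t \sqrt{H_V}} f \big\|_{L^2_t L^2_x(\R \times \R^5)} \;\lesssim\; \|f\|_{L^2(\R^5)}.
\end{align*}
Writing $\vec w(t)$ via the spectral calculus for $H_V$ and comparing against the free $\R^{1+5}$ wave evolution reduces the desired estimates to the corresponding free-wave Strichartz bounds, with the perturbation $Vw$ absorbed using the smoothing estimate together with the pointwise decay $|V| \lesssim \ang{r}^{-6}$. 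The main obstacle throughout is the exclusion of a zero threshold resonance in part~(a): because Adkins--Nappi breaks scaling and admits no continuous family of stationary solutions, there is no explicit ``gauge'' zero mode to quotient out, and one has no choice but to exploit structural information about $Q_n$. The monotonicity $Q_n' > 0$ from Theorem~\ref{t01}, coupled with the explicit identity for $H_V Q_n'$ above, is precisely what closes the argument.
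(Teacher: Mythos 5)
Your proposal is correct but reaches the spectral conclusions in part (a) by a genuinely different route than the paper. Where you establish $H_V \ge 0$ variationally from the second variation of the static energy $\calJ$ at the minimizer $Q_n$ (and your computation of $\delta^2 \calJ(Q_n)[ru,ru]$, including the trigonometric identity, checks out), the paper never invokes minimality: it conjugates to the half-line operator $\LL_V = -\p_r^2 + \frac{2}{r^2} + V$, sets $\Phi = r^2 Q_n'$, and observes that $\Phi$ is a strictly positive zero eigenfunction of the comparison operator $\LL_V - \tilde V$ where $\tilde V = \frac{2(Q_n - \sin Q_n\cos Q_n)(1-\cos 2Q_n)}{r^5 Q_n'}$ (this is the half-line version of your identity $H_V Q_n' = \frac{2(Q_n - \sin Q_n \cos Q_n)(1-\cos 2Q_n)}{r^5}$). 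Sturm oscillation theory then gives $\LL_V - \tilde V \ge 0$ in one stroke, and the resulting quadratic-form inequality $\langle f, \LL_V f\rangle \ge \int_0^\infty \tilde V |f|^2\,dr$ excludes both negative spectrum and a zero eigenvalue simultaneously, because $\tilde V > 0$ off a finite set. By contrast you need two separate devices: the minimizer property for positivity, and then ground-state positivity (Kato's inequality plus the strong maximum principle) combined with integration by parts against $Q_n'$ to exclude the zero eigenvalue. Both routes ultimately hinge on the same two inputs from Section 2 — the identity for $H_V Q_n'$ and the monotonicity $Q_n' > 0$ — so the essential content is the same, but the Sturm comparison is more economical and is self-contained in ODE terms, whereas your variational argument buys a perhaps more transparent conceptual explanation for why $H_V \ge 0$ (stability of the soliton as a minimizer), at the cost of bringing in the maximum-principle machinery. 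One small citation slip: Proposition~\ref{p01} by itself only asserts existence of a minimizer; that this minimizer is $Q_n$ requires the uniqueness statement of Theorem~\ref{t01}. Your treatment of the resonance/eigenvalue dichotomy via the Frobenius indices at $r=\infty$, and your outline of part (b) via Christ--Kiselev plus local energy decay, track the paper's arguments closely.
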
 
%\begin{itemize}
%\item[A] 
%\item[B]
%\end{itemize}

A general (and by now standard) argument that we outline in Appendix~\ref{a:S} shows that Theorem~\ref{t:strich}(b) holds for any smooth bounded  potential $V(r)$ decaying like~\eqref{Vdec} and corresponding Schr\"odinger operator $H_V$  satisfying the conclusions of Theorem~\ref{t:strich}(a). Here we prove part (a) for the operator $H_V$.  

\begin{proof}[Proof of Theorem~\ref{t:strich} \emph{(a)}]
	It is convenient to conjugate to a half-line operator via the $L^2$ isometry 
	\EQ{
		L^2_{\rad} ( \R^5) \ni f \mapsto r^2 f=  \phi \in L^2((0, \infty), dr) 
	}
	We then define $\LL_V$ by 
	\EQ{
		H_V f = r^{-2} \LL_V (r^2 f) 
	}
	which yields, 
	\EQ{
		\LL_V = -\p_r^2 + \frac{2}{r^2} + V(r).
	}
	From the decay properties of $V$ we see that it suffices to show that $\LL_V$ has no negative spectrum and that $0$ is neither an eigenvalue or a resonance. In fact, we can quickly rule out a resonance at $0$. Indeed $\{ r^2, r^{-1}\}$ is a fundamental system for $ \LL_0 := -\p_r^2 + \frac{2}{r^2}$. Since $V(r)  \lesssim \ang{r}^{-6}$,  any bounded solution $\phi_0$ to $\LL_V \phi= 0$ satisfies  $ \abs{\phi_0(r)}  \lesssim r^{-1}$ as $r \to \infty$  and  $ \abs{\phi_0(r)} \lesssim r^2$ as $r \to 0$. This means that $\phi_0 \in L^2(0, \infty)$ and is thus an eigenvalue.  
	
	We now show that $\LL_V$ has no negative spectrum and no eigenvalue at $0$. The proof will follow by a Sturm comparison argument where we find a positive threshold eigenvector for a half-line operator with a deeper potential well. To motivate our guess for this eigenvector and operator we note  that the last term on the right-hand-side of~\eqref{eq:an}
	% (which is makes~\eqref{eq:an} different from the harmonic maps equation) 
	is always nonnegative and is thus de-focusing in nature and that this term is the only difference between~\eqref{e00} and the harmonic maps equation in $3d$.  % and that $Q_r(r)$ is always nonnegative. 
	Next, recall that the harmonic maps equation in $\R^3$ is invariant under the  $\dot{H}^{\frac{3}{2}}(\R^3)$-invariant scaling given by 
	\EQ{
		g \mapsto g_{\la} = g( r/ \la)
	}
	and with infinitesimal generator 
	\EQ{
		\La g = - \frac{d}{d \la} g_{\la} \rest_{\la = 1}  =  (x \cdot \na )g = r \p_r g
	}
	Consider $\La Q = r \p_r Q(r)$. Passing to the half-line via $\La Q \mapsto r \La Q$ we obtain 
	\EQ{
		\Phi(r):=  r \La Q = r^2 Q_r(r)
	}
	which solves 
	\EQ{\label{eq:tiV}
		(\LL_V - \ti V) \Phi = 0
	}
	where 
	\EQ{
		\ti V(r)  =  2 \frac{( Q - \sin Q \cos Q)(1- \cos 2 Q)}{ r^5 Q_r}
	}
	To prove~\eqref{eq:tiV} simply multiply~\eqref{e00} by $r^2$ and differentiate in $r$. 
	
	By Theorem~\ref{t01} we see that  $|\ti V(r)| \lesssim 1$ and thus $\calD( \LL_V - \ti V)  = \calD( \LL_V)$, where $\calD( \LL)$ denotes the domain of $\LL$.    In fact, since $0< Q(r) < n \pi$ and $Q_r(r) > 0$ on $(0, \infty)$ we have $\ti V(r) > 0$ for all $r \in (0, \infty)$ except at only finitely many points (where $Q(r)$ equals an integer or half--integer multiple of $\pi$).  Moreover, since $Q_{r}(r) \lesssim  r^{-3}$ as $r \to \infty$ it follows that $\Phi(r)$ is a eigenvalue at $0$ for $\LL_V - \ti V$. Since $Q_r >0$ for all $r \in (0, \infty)$ it's immediate from Sturm oscillation theory that $\LL_V - \ti V$ has no negative spectrum.  In particular, by a variational principle we must have 
	\EQ{
		\langle f  \mid (\LL_V - \ti V) f\rangle_{L^2(0, \infty) }  \ge 0 \quad \forall f \in H^2(0, \infty)
	}
	and therefore 
	\EQ{
		\langle f  \mid \LL_V  f\rangle_{L^2(0, \infty) }  \ge  \int_0^\infty \ti V(r) |f(r)|^2 \, dr  \quad \forall f \in H^2(0, \infty).
	}
	Since $\ti V(r) >0$ on $(0,\infty)$, modulo a finite set,  the above precludes $\LL_V$ having negative spectrum or an eigenvalue at $0$. We remark that the non--negativity of the last term in~\eqref{eq:an} along with the fact that $Q(r)$ is strictly increasing are what led to the crucial non--negativity of $\ti V(r)$. This completes the proof of Theorem~\ref{t:strich}(a). 
\end{proof} 

See Appendix~\ref{a:S} for a brief outline of the proof of Theorem~\ref{t:strich}(b). 

%%%%%%%%%%%%%%%%%%%%%%%%%%%%%%%%%%%%%%%%%%%%%%%%%%%%%%%%%%%%%%
%%%%%%%%%%%%%%%%%%%%%%%%%%%%%%%%%%%%%%%%%%%%%%%%%%%%%%%%%%%%%%
%%%%%%%%%%%%%%%%%%%%%%%%%%%%%%%%%%%%%%%%%%%%%%%%%%%%%%%%%%%%%%

\section{Asymptotic Stability of $Q_n$}

\label{s:SD} 
In this section, we establish the theory for solutions to~\eqref{eq:an} which are small perturbations of the stationary Adkins--Nappi map $Q_n$.  In particular, we prove that $\vec Q_n$ is  asymptotic stable under perturbations that are small in  $\HH_0$.

We remark that the appearance  of the  $\dot{H}^2 \times \dot{H^1}$ in the definition of the norms $\HH$, $\HH_n$ and $\HH_0$ is due to the fact that the top order nonlinearity in the equation~\eqref{e31} is quintic and  $\dot{H}^2 \times \dot{H^1}$  is the scale invariant norm for the underlying scale invariant quintic nonlinear wave equation in $\R^{1+5}$. 
\subsection{Small data and perturbative theory}

We now formulate the local well--posedness and small data global well--posedness theory for \eqref{e31}.  The crucial ingredients
for establishing these theories are the Strichartz estimates established in Section 3 for the perturbed radial $5d$ wave equation
\begin{align}
\begin{split}\label{e38}%%%%%%%%%%%%%%%%%%%%%%%%%%%%%%%%%%
&v_{tt} - v_{rr} - \frac{4}{r} v_r  + V(r) v = F, \\
&\vec v(0) = (v_0,v_1), 
\end{split}
\end{align}
where $V$ is as in \eqref{e32}. In particular, for $I$ a time interval, if we denote 
\begin{align*}
S(I) &= L^3_t W^{1, \frac{30}{7}}_x(I) \cap L^5_t W^{1, \frac{50}{13}}_x(I), \\
W(I) &= L^{3}_t \bigl ( \dot W^{\frac{1}{2}, 3}_x \cap \dot W^{\frac{3}{2}, 3}_x \bigr )(I), \\
N(I) &= L^{\frac{3}{2}}_t \bigl ( \dot W^{\frac{1}{2}, \frac{3}{2}}_x \cap 
\dot W^{\frac{3}{2}, \frac{3}{2}}_x \bigr )(I) + L^1_t H^1_x(I),
\end{align*}
then a solution $v$ to \eqref{e38} on $I \ni 0$ satisfies the Strichartz estimate 
\begin{align}\label{e39}%%%%%%%%%%%%%%%%%%%%%%%%%%%%%%%
\| \vec v \|_{L^\infty_t \cl H(I)} + \| v \|_{W(I)} + \| v \|_{S(I)} \lesssim \| \vec v(0) \|_{\cl H} 
+ \| F \|_{N(I)}.
\end{align}

Other essential tools that will be used in the proofs are the following radial Hardy--Sobolev embedding and product rule for 
fractional derivatives. 

\begin{lem}[Lemma 3, \cite{CKM}] \label{l31} %%%% Cote-Kenig-Merle
	Let $d \geq 3$, and $p,q,\al,\beta \geq 0$ be such that $1 \leq q \leq p \leq \infty$, and 
	$0 < (\beta - \al) q < d$.  Then there exists $C = C(d,p,q,\al,\beta)$ such that for all $v$ radial in 
	$\R^d$, 
	\begin{align*}
	\left \| r^{\frac{d}{q} - \frac{d}{p} - \beta + \al} v \right \|_{\dot W^{\al, p}} \leq C \| v \|_{\dot W^{\beta, q}}.  
	\end{align*}
\end{lem}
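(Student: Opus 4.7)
The inequality is a radial version of the Caffarelli--Kohn--Nirenberg weighted Sobolev embedding; the weight $r^{d/q - d/p - \beta + \al}$ is exactly what scaling demands, and the radial hypothesis broadens the range of admissible exponents beyond what holds for general functions. The plan has two main steps: first reduce to the unweighted endpoint $\al = 0$ via a commutator argument, then invoke a Stein--Weiss-type weighted Hardy--Littlewood--Sobolev inequality for radial Riesz potentials.

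For step one, set $f = (-\De)^{\al/2} v$, so that $\|v\|_{\dot W^{\be, q}} = \|f\|_{\dot W^{\be - \al, q}}$, and since $v = I_{\al} f$ with $I_{\al} = (-\De)^{-\al/2}$ the Riesz potential, the left-hand side equals $\|(-\De)^{\al/2}(r^s I_{\al} f)\|_{L^p}$ for $s = d/q - d/p - \be + \al$. Writing
$$(-\De)^{\al/2}(r^s I_{\al} f) = r^s f + [(-\De)^{\al/2}, r^s] I_{\al} f,$$
the commutator on radial inputs can be handled by a Kato--Ponce product rule plus Hardy's inequality, both error terms being of strictly lower scaling and hence absorbable. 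This reduces matters to proving $\|r^s f\|_{L^p} \lesssim \|f\|_{\dot W^{\be - \al, q}}$ for radial $f$.

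For step two, write $f = I_{\be - \al} g$ with $g \in L^q$ radial. The desired estimate becomes
$$\|r^s I_{\be - \al} g\|_{L^p} \lesssim \|g\|_{L^q}, \qquad s = \tfrac{d}{q} - \tfrac{d}{p} - (\be - \al),$$
which is the classical Stein--Weiss weighted Hardy--Littlewood--Sobolev inequality. The hypothesis $0 < (\be - \al) q < d$ ensures the Riesz kernel has the right integrability, and the radiality of $g$ widens the admissible weight range precisely past where the nonradial Stein--Weiss inequality would fail.

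The main obstacle is justifying the commutator analysis in step one with the right constants; in case one wishes to avoid it, a cleaner alternative is to perform a Littlewood--Paley decomposition $v = \sum_k P_k v$, use the improved pointwise decay of radial $P_k v$ (a Strauss-type estimate) in conjunction with Bernstein's inequality to obtain the weighted bound frequency-by-frequency, and then reassemble via the square function characterization of $\dot W^{\al, p}$. Either route reduces the problem to a radial Stein--Weiss inequality, which is the technical heart of the lemma.
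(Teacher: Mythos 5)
The paper does not prove this lemma; it is quoted verbatim from reference \cite{CKM} and used as a black box, so there is no in-paper argument to compare against. You are therefore being judged only on whether your sketch would hold up as a proof of the cited result.

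Your step two, reducing to a radial Stein--Weiss inequality by writing $v = I_{\beta-\alpha}g$, correctly identifies the mechanism behind the lemma; the radial improvement of the Stein--Weiss weight range is indeed what is being exploited. But step one has a genuine gap. The assertion that the commutator $[(-\Delta)^{\alpha/2}, r^{s}] I_{\alpha} f$ produces error terms ``of strictly lower scaling and hence absorbable'' is a category error: the exponent $s = d/q - d/p - \beta + \alpha$ is dictated precisely by homogeneity so that both sides of the inequality scale identically, and consequently \emph{every} term in any decomposition of $(-\Delta)^{\alpha/2}(r^{s} I_{\alpha}f)$ has the same scaling weight --- nothing is subordinate on scaling grounds. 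To leading order the commutator contributes an expression of the form $r^{s-1} I_{\beta-\alpha+1}g$, i.e.\ another Stein--Weiss estimate with weight exponent lowered by one and Riesz order raised by one; this is not a small perturbation to be absorbed but a separate inequality whose admissibility range must be checked independently, and for non-integer $\alpha$ the symbolic expansion of the commutator is not finite, so a single Kato--Ponce application does not control the remainder. Your Littlewood--Paley alternative is the more reliable route and closer to how such radial weighted Sobolev estimates are actually established, but as written it is only a plan: the delicate points --- how the weight $r^{s}$ interacts with the frequency localization $P_k$, and in which weighted Lebesgue space the square-function characterization of $\dot W^{\alpha,p}$ remains valid --- are left untreated. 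As it stands the proposal identifies the right landscape but does not constitute a proof.
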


\begin{lem}[Lemma 5, \cite{CKM}]\label{l32}
	Let $1 < p < \infty$, $0 < \al < 1$.  Then 
	\begin{align*}
	\| v w \|_{\dot W^{\al, p}} \lesssim \| v \|_{L^{p_1}} \| w \|_{\dot W^{\al, p_2}} +
	\| v \|_{\dot W^{\al, p_3}} \| w \|_{L^{p_4}},
	\end{align*}
	where $\frac{1}{p} = \frac{1}{p_1} + \frac{1}{p_2} = \frac{1}{p_3} + \frac{1}{p_4}$ and $1 < p_2, p_3 < \infty$.  
\end{lem}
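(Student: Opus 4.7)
My plan is to prove this fractional Leibniz rule via Bony's paraproduct decomposition in the Littlewood--Paley calculus. Dyadically decomposing $v = \sum_j \Delta_j v$ and $w = \sum_k \Delta_k w$, I would split
\[ vw = \pi_L(v,w) + \pi_H(v,w) + \pi_R(v,w) \]
where $\pi_L(v,w) = \sum_j S_{j-3} v \cdot \Delta_j w$ is the low-high paraproduct, $\pi_H(v,w) = \sum_j \Delta_j v \cdot S_{j-3} w$ is the symmetric high-low paraproduct, and $\pi_R(v,w) = \sum_{|j-k|\le 2} \Delta_j v \cdot \Delta_k w$ is the diagonal (resonant) interaction.

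For the low-high piece, each summand $S_{j-3} v \cdot \Delta_j w$ is frequency-localized in an annulus of size $\sim 2^j$, so $|\nabla|^\al$ acts on it essentially as multiplication by $2^{j\al}$. Using the square-function characterization
\[ \|f\|_{\dot W^{\al,p}} \sim \Big\| \Big(\sum_j |2^{j\al} \Delta_j f|^2 \Big)^{1/2} \Big\|_{L^p}, \]
the pointwise bound $|S_{j-3} v| \lesssim Mv$ (with $M$ the Hardy--Littlewood maximal function), the Fefferman--Stein vector-valued maximal inequality (which forces $1 < p_2 < \infty$), and H\"older's inequality with $1/p = 1/p_1 + 1/p_2$, this contribution is bounded by $\|v\|_{L^{p_1}} \|w\|_{\dot W^{\al, p_2}}$. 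The high-low piece $\pi_H$ is treated symmetrically, producing $\|v\|_{\dot W^{\al, p_3}} \|w\|_{L^{p_4}}$.

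The main obstacle is the resonant piece $\pi_R$, where both factors live at the same frequency scale and the output frequency is a priori unconstrained. My strategy is to apply an outer Littlewood--Paley projection $\Delta_\ell$ and exploit that $\Delta_\ell(\Delta_j v \cdot \Delta_k w)$ vanishes unless $\ell \le j + O(1)$. Then, using Bernstein's inequality on $\Delta_j v \cdot \Delta_k w$ together with the geometric factor $2^{(\ell-j)\al}$ that arises from $\al > 0$, one can sum in $\ell$ to dominate the $\dot W^{\al,p}$-norm of $\pi_R$ by either term on the right-hand side through a routine rearrangement; the constraint $\al < 1$ is what guarantees convergence of the resulting geometric series. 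A cleaner alternative is to observe that both paraproducts are bilinear pseudodifferential operators with Coifman--Meyer type symbols, so that Coifman--Meyer theory yields $L^{p_1} \times \dot W^{\al, p_2} \to \dot W^{\al, p}$ boundedness directly for all $p_1, p_2 \in (1,\infty)$ with $1/p_1 + 1/p_2 = 1/p$, bypassing the explicit square-function computation entirely.
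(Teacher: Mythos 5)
The paper does not prove this lemma: it is quoted as Lemma~5 of the reference \cite{CKM} and invoked as a black box, so there is no in-paper argument to compare against. Your sketch is the standard Littlewood--Paley/paraproduct proof of the fractional Leibniz (Kato--Ponce) rule, and the outline is essentially sound. The Bony splitting, the square-function characterization of $\dot W^{\al,p}$, the pointwise bound $|S_{j-3}v|\lesssim Mv$, H\"older at exponents $1/p=1/p_1+1/p_2$, and the vector-valued maximal/square-function machinery for the low-high (and, by symmetry, high-low) pieces are exactly the right ingredients; the Coifman--Meyer alternative you mention is also a legitimate route and handles all three pieces uniformly.

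Two small corrections. First, in the resonant piece the geometric series $\sum_{\ell\le j+O(1)}2^{(\ell-j)\al}$ converges as soon as $\al>0$; the restriction $\al<1$ is not what makes this sum converge. In fact the inequality as stated holds for every $\al>0$ (as in Kato--Ponce and Kenig--Ponce--Vega), and the hypothesis $0<\al<1$ in the cited source is simply the range needed there. Second, be a bit more careful about where each exponent constraint enters: the square-function characterization of $\dot W^{\al,p_2}$ and Fefferman--Stein force $1<p_2<\infty$ (and by symmetry $1<p_3<\infty$), while only the scalar H\"older/maximal bounds are needed at $p_1$ and $p_4$, which may therefore be $\infty$. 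This matches the hypotheses in the statement, but your parenthetical attributes the $p_2$ constraint a bit loosely. With these points tightened, the proposal is a correct proof of the cited result.
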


\begin{prop}\label{p32}
	Let $(u_0,u_1) \in \cl H$.  Then there exists a unique solution $C(I_{\max}; \cl H)$ to \eqref{e31} with $\vec u(0) = (u_0,u_1)$ 
	defined on a maximal time interval of existence $I_{\max}(u) = (T_-(u), T_+(u))$ such that for any $J \Subset I_{\max}$, 
	\begin{align*}
	\| \vec u \|_{L^\infty_t \cl H(J)} + \| u \|_{W(J)} + \| u \|_{S(J)} < \infty.  
	\end{align*}
	A solution $\vec u(t)$ with $T_+(u) = \infty$ scatters to a free wave, i.e. a solution to \eqref{e37}, if and only if 
	$\| u \|_{S(0,\infty)} < \infty$.  A similar statement holds for negative times.  In particular, there exists $\delta > 0$ 
	such that if $\| \vec u(0) \|_{\cl H} < \delta$, then $I_{\max} = \R$ and 
	\begin{align}\label{e310}%%%%%%%%%%%%%%%%%%%%%%%%%%%%%%%%%%%%%%%%
	\| \vec u \|_{L^\infty_t \cl H(\R)} + \| u \|_{W(\R)} + \| u \|_{S(\R)} \lesssim \| \vec u(0) \|_{\cl H}, 
	\end{align}
	so that $\vec u(t)$ scatters to free waves as $t \rar \pm \infty$.  Finally, we have the standard finite time blow--up criterion
	that $T_+(u) < \infty$ if and only if $\| u \|_{S(0,T_+)} = \infty$ with a similar statement holding for negative times. 
\end{prop}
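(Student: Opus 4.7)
The plan is to run a contraction-mapping argument on the Duhamel map
\[
\Phi(u)(t) := S_V(t)(u_0, u_1) - \int_0^t S_V(t-s) \bigl[ 0, Z(r, u(s)) \bigr]\, ds,
\]
where $S_V(t)$ is the linear propagator for $\partial_t^2 - \Delta + V$, in the Banach space
\[
X(I) := \{ u \in C(I; \mathcal{H}) \ : \ \| \vec u \|_{L^\infty_t \mathcal{H}(I)} + \| u \|_{W(I)} + \| u \|_{S(I)} < \infty \}.
\]
The Strichartz bound \eqref{e39} immediately gives $\|\Phi(u)\|_{X(I)} \lesssim \|(u_0, u_1)\|_{\mathcal{H}} + \|Z(r, u)\|_{N(I)}$, so the crux of the argument is a nonlinear Lipschitz estimate of the form
\[
\|Z(r, u) - Z(r, v)\|_{N(I)} \lesssim \sum_{p=2}^{5} \bigl(\|u\|_{X(I)} + \|v\|_{X(I)}\bigr)^{p-1} \|u - v\|_{X(I)}.
\]

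I would prove this by treating each of the four terms $u^p Z_p$ in the decomposition \eqref{e312b} separately, using the pointwise bounds \eqref{Z bounds}. For the critical quintic $u^5 Z_5(ru)$ I would place it in $L^{3/2}_t (\dot W^{1/2, 3/2} \cap \dot W^{3/2, 3/2})(I)$ and distribute derivatives via iteration of the fractional Leibniz rule of Lemma \ref{l32}, with four factors of $u$ measured in $S(I)$-compatible Lebesgue norms arising from $L^5_t W^{1, 50/13}_x$ together with Sobolev embedding on $\mathbb{R}^5$, and the remaining derivative landing in $W(I)$. The cubic $u^3 Z_3(r, ru)$ with $|Z_3|$ merely bounded is handled analogously, additionally using $\dot H^2(\mathbb{R}^5) \hookrightarrow L^{10}$ to absorb two extra factors into the $L^\infty_t \mathcal{H}$ component of $X$. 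For the lower-order terms $u^2 Z_2$ and $u^4 Z_4$ carrying the radial weights $\langle r \rangle^{-3}$ and $\langle r \rangle^{-1}$, I would place them in the $L^1_t H^1_x$ component of $N(I)$ and apply the radial Hardy--Sobolev inequality of Lemma \ref{l31} to trade the spatial decay for Sobolev regularity.

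With the Lipschitz estimate above, the rest is standard. A fixed-point argument on a ball in $X(I)$ produces a unique solution, globally for data small in $\mathcal{H}$ (yielding \eqref{e310} and scattering as $t \to \pm \infty$), or on a short interval for general data (yielding local existence). Maximality and uniqueness are routine. For scattering when $\|u\|_{S(0, T_+)} < \infty$, the nonlinear estimate upgrades finiteness of the $S$-norm to finiteness of the full $X$-norm, after which the Duhamel tail $\int_t^\infty S_V(-s)[0, Z(r, u)]\, ds$ is Cauchy in $\mathcal{H}$, producing an asymptotic linear profile; conversely, free waves in $\dot H^1 \times L^2$ automatically have finite $S(\mathbb{R})$-norm. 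The blow-up criterion is a corollary: if $\|u\|_{S(0, T_+)} < \infty$ one controls $\|u\|_{X(0, T_+)}$, then restarts from a time near $T_+$ using the small-data theory applied to the shifted initial data, extending the solution past $T_+$.

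The main obstacle is the nonlinear estimate for the quintic term $u^5 Z_5(ru)$: it has no spatial weight to exploit, sits exactly at the scaling invariance of $\dot H^2 \times \dot H^1$ in $\mathbb{R}^5$, and demands that the Strichartz, Leibniz, and Sobolev indices fit together with no slack whatsoever. This criticality is precisely why the norm $\mathcal{H}$ must simultaneously include the energy-level $\dot H^1 \times L^2$ component --- used by the Strichartz estimate of Theorem \ref{t:lin} --- and the scaling-critical $\dot H^2 \times \dot H^1$ component --- needed to control the top-order nonlinearity.
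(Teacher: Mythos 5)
Your overall strategy matches the paper's: Duhamel contraction in the Strichartz space $X(I)$, estimating each $u^p Z_p$ in a component of $N(I) = L^{3/2}_t(\dot W^{1/2,3/2}\cap\dot W^{3/2,3/2}) + L^1_t H^1_x$, then running the standard continuity and blow-up arguments. However, you have assigned the quadratic and quintic terms to the wrong components of $N(I)$, and the quadratic assignment genuinely fails.

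Placing $u^2 Z_2(r)$ in $L^1_t H^1_x$ cannot work, for a reason of time integrability that no amount of spatial juggling (Hardy--Sobolev or otherwise) can repair: the admissible Strichartz solution norms in $S(I)$ and $W(I)$ carry time exponents $p>2$ (the $L^2_t$ endpoint being excluded), so two factors of $u$ yield at best $L^{1+}_t$ in time, while the $L^\infty_t\HH$ energy contributes nothing towards $L^1_t$. You simply cannot reach $L^1_t$ from a bilinear expression in $u$. The paper correctly places $u^2 Z_2$ in the $L^{3/2}_t(\dot W^{1/2,3/2}\cap\dot W^{3/2,3/2})$ component, which is exactly what two $L^3_t$-in-time factors produce, and uses the decay $|Z_2^{(j)}(r)|\lesssim\langle r\rangle^{-3-j}$ to give $Z_2\in L^{10}\cap\dot W^{1/2,5}$ in the fractional Leibniz rule of Lemma~\ref{l32}. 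Conversely, it is the quintic $u^5Z_5(ru)$ that goes into $L^1_t H^1_x$, where the five (or more, once you differentiate) factors provide ample time integrability via the embedding $S(I)\hookrightarrow L^4_tL^{10}_x\cap L^4_tL^{20}_x$. Your proposal to instead treat $u^5Z_5$ in the $L^{3/2}_t$ component might be forced through with delicate interpolation, but it is unnecessarily painful next to the paper's route, and in any case does not save the quadratic term. Also worth noting: for the cubic $u^3Z_3$ with two factors measured in $L^\infty_t\HH$, you would only get $L^3_t$ in time, which does not embed into $L^{3/2}_t(\R)$; the paper instead places the cubic in $L^1_t H^1_x$ using three $L^3_tW^{1,30/7}$ factors.

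One smaller gap in the scattering direction: your ``asymptotic linear profile'' extracted from $\int_t^\infty S_V(-s)[0,Z(r,u)]\,ds$ is a solution to the \emph{perturbed} equation $w_{tt}-\Delta w+Vw=0$, not the free equation \eqref{e37} that the proposition asks for. One must do a second Duhamel iteration, writing $w_L(t)=S(t)[\vec w_L(0)+\int_0^t S(-s)(0,Vw_L(s))\,ds]$ and showing the correction converges as $t\to\infty$ (using the decay $|V|\lesssim\langle r\rangle^{-6}$), to exhibit a genuine free wave.
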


\begin{rem}
	We note that Proposition \ref{p32} establishes the unconditional asymptotic stability of the stationary Adkins--Nappi map $Q_n$, i.e., Theorem~\ref{t:main}(a).  
\end{rem} 

\begin{proof}
	The proofs of local well--posedness and the blow--up criterion follow from the standard Duhamel formulation, contraction mapping and continuity arguments using the Strichartz estimate \eqref{e39}.  
	For completeness, we prove the a priori small data global estimate \eqref{e310} which gives the overall flavor of the arguments.  
	
	Let $I \subset \R$, $0 \in I$, and let
	$u$ be a solution to \eqref{e31}.  By \eqref{e39}, we have that
	\begin{align}\label{e311}%%%%%%%%%%%%%%%%%%%%%%%%%%%%%%%%%% 
	\| \vec u \|_{L^\infty_t \cl H(I)} + \| u \|_{W(I)} + \| u \|_{S(I)} \lesssim \| \vec u(0) \|_{\cl H} 
	+ \| Z(\cdot, u) \|_{N(I)}.
	\end{align}
	We estimate the right--hand side of \eqref{e311} via 
	\begin{align}
	\begin{split}\label{e312}%%%%%%%%%%%%%%%%%%%%%%%%%%
	\| Z(\cdot, u) \|_{N(I)} 
	&\leq \| u^2 Z_2(r) \|_{ L^{\frac{3}{2}}_t \bigl ( \dot W^{\frac{1}{2}, \frac{3}{2}}_x \cap 
		\dot W^{\frac{3}{2}, \frac{3}{2}}_x \bigr )(I)} + \| u^3 Z_3(r,ru) \|_{L^1_t H^1_x(I)} \\
	&\:\:+ \| u^4 Z_4(r,ru) \|_{L^1_t H^1_x(I)} + \| u^5 Z_5(ru) \|_{L^1_t H^1_x(I)}.  
	\end{split}
	\end{align}
	
	We first estimate the quadratic term in \eqref{e312}.  Note that by \eqref{e33} and 
	Theorem \ref{t01}, $|Z_2^{(j)}(r)| \lesssim \ang r^{-3 - j}$.  By Lemma \ref{l32}, 
	\begin{align*}
	\| u^2 Z_2(r) \|_{\dot W^{\frac{1}{2}, \frac{3}{2}}}
	&\lesssim \| u^2 \|_{L^{\frac{15}{7}}} \| Z_2 \|_{\dot W^{\frac{1}{2}, 5}} + 
	\| u^2 \|_{\dot W^{\frac{1}{2}, \frac{30}{17}}} \| Z_2 \|_{L^{10}} \\
	&\lesssim \| u \|_{L^{\frac{30}{7}}}^2 + \| u \|_{\dot W^{\frac{1}{2}, 3}} \| u \|_{L^{\frac{30}{7}}}.  
	\end{align*}
	Similarly 
	\begin{align*}
	\| u^2 Z_2(r) \|_{\dot W^{\frac{3}{2}, \frac{3}{2}}} 
	&\simeq \| \p_r (u^2 Z_2(r)) \|_{\dot W^{\frac{1}{2}, \frac{3}{2}}} \\
	&\lesssim \| u_r u Z_2(r) \|_{{\dot W^{\frac{1}{2}, \frac{3}{2}}}} +
	\| u^2 Z_2'(r) \|_{{\dot W^{\frac{1}{2}, \frac{3}{2}}}} \\
	&\lesssim \| u_r \|_{L^{\frac{30}{7}}} \| u \|_{L^{\frac{30}{7}}} + \| u_r \|_{\dot W^{\frac{1}{2}, 3}} \| u \|_{L^{\frac{30}{7}}} \\
	&\:\:+ \| u_r \|_{L^{\frac{30}{7}}} \| u \|_{\dot W^{\frac{1}{2}, 3}} + \| u \|^2_{L^{\frac{30}{7}}} \\
	&\:\:+ \| u \|_{\dot W^{\frac{1}{2}, 3}}\| u \|_{L^{\frac{30}{7}}}.  
	\end{align*}
	Thus, 
	\begin{align*}
	\| u^2 Z_2(r) \|_{L^{\frac{3}{2}}_t \bigl ( \dot W^{\frac{1}{2}, \frac{3}{2}}_x \cap 
		\dot W^{\frac{3}{2}, \frac{3}{2}}_x \bigr )(I)}
	\lesssim \| u \|^2_{W^{1, \frac{30}{7}}} + \| u \|_{\dot W^{\frac{1}{2}, 3}_x \cap \dot W^{\frac{3}{2}, 3}} 
	\| u \|_{W^{1, \frac{30}{7}}},
	\end{align*}
	whence by H\"older's inequality in time we see that 
	\begin{align}\label{e313}%%%%%%%%%%%%%%%%%%%%%%%%%%%%%%%%%%%%%%%%%%%%%%%%%%%%
	\| u^2 Z_2(r) \|_{L^{\frac{3}{2}}_t \bigl ( \dot W^{\frac{1}{2}, \frac{3}{2}}_x \cap 
		\dot W^{\frac{3}{2}, \frac{3}{2}}_x \bigr )(I)} 
	\lesssim \| u \|^2_{S(I)} + \| u \|_{W(I)} \| u \|_{S(I)}. 
	\end{align}
	
	We will now estimate the cubic term.  We note that $u^3 Z_3(r,ru)$ is a sum of terms of the form $z(r) u^3 Z(ru)$ where
	for all $j \geq 0$ 
	\begin{align}
	\begin{split}\label{e314}%%%%%%%%%%%%%%%%%%%%%%%%%%%%%%%
	|z^{(j)}(r)| &\lesssim 1, \\
	|Z^{(j)}(\vp)| &\lesssim 1. 
	\end{split}
	\end{align}
	Thus, to estimate $u^3 Z(r,ru)$ in $L^1_t H^1_x(I)$, it suffices to estimate a nonlinearity of the form $z(r)u^3 Z(ru)$ where $z,Z$ satisfy 
	\eqref{e314}.  We now estimate
	\begin{align*}
	\| z(r) u^3 Z(ru) \|_{L^2} 
	&\lesssim \| u^3 \|_{L^2} \\
	&\lesssim \| u \|_{L^{\frac{30}{7}}} \| u \|^2_{L^{\frac{15}{2}}}. 
	\end{align*}
	By Sobolev embedding we have  
	\begin{align}\label{e315a}%%%%%%%%%%%%%%%%%%%%%%%%%
	W^{1, \frac{30}{7}} \hookrightarrow L^p, \quad \forall p \in \left [ \frac{30}{7}, 30 \right ] 
	\end{align}
	which along with the previous estimate implies that
	\begin{align*}
	\| z(r) u^3 Z(ru) \|_{L^2} \lesssim \| u \|^3_{W^{1, \frac{30}{7}}}.  
	\end{align*}
	Next, we have 
	\begin{align*}
	\| z(r) u^3 Z(ru) \|_{\dot H^1} &= \| \p_r (z(r) u^3 Z(ru)) \|_{L^2} \\
	&\lesssim \| z'(r) u^3 Z(ru) \|_{L^2} + 
	\| z(r) u_r u^2 Z(ru)) \|_{L^2} \\ &\:\:+ 
	\| z(r) u^3 Z'(ru) (ru)_r \|_{L^2}. 
	\end{align*}
	The first two terms are estimated as in the previous $L^2$ estimate and we obtain 
	\begin{align*}
	\| z'(r)& u^3 Z(ru) \|_{L^2} + 
	\| z(r) u_r u^2 Z(ru)) \|_{L^2} \\
	&\lesssim \| u^3 \|_{L^2} + \| u_r u^2 \|_{L^2} \\
	&\lesssim \bigl [ \| u \|_{L^{\frac{30}{7}}} + \| u_r \|_{L^{\frac{30}{7}}} \bigr ] \| u \|^2_{L^{\frac{15}{2}}} \\
	&\lesssim \| u \|^3_{W^{1, \frac{30}{7}}}.
	\end{align*}
	The remaining term is estimated using \eqref{e314}, Lemma \ref{l32} and \eqref{e315a}:
	\begin{align*}
	\| z'(r) u^3 Z'(ru) (ru)_r \|_{L^2} 
	&\lesssim \| u^3 (ru)_r \|_{L^2} \\
	&\lesssim \| u \|_{L^{\frac{15}{2}}}^3 \| (ru)_r \|_{L^{10}} \\
	&\lesssim \| u \|^3_{W^{1, \frac{30}{7}}} \| u \|_{\dot H^2}.  
	\end{align*}
	Thus, 
	\begin{align*}
	\| z(r) u^3 Z(ru) \|_{H^1} \lesssim \| u \|^3_{W^{1, \frac{30}{7}}} + \| u \|^3_{W^{1, \frac{30}{7}}} \| u \|_{\dot H^2}.  
	\end{align*}
	As noted previously, $u^3 Z_3$ is a sum of nonlinearities of the previous form estimated whence by integration
	in time we obtain 
	\begin{align}\label{e316}%%%%%%%%%%%%%%%%%%%%%%%%%%%%%%%%%%%
	\| u^3 Z_3(r, ru) \|_{L^1_t H^1_x(I)} \lesssim \| u \|_{S(I)}^3 + \| \vec u \|_{L^\infty_t \cl H(I)} \| u \|_{S(I)}^3.
	\end{align}
	
	We will now turn to the quartic term in \eqref{e312}.  Similar to the previous estimate for the cubic term, it suffices to estimate
	a nonlinearity of the form $z(r) u^4 Z(ru)$ in $L^1_t H^1_x$ with $z, Z$ satisfying \eqref{e314}.  Before 
	doing so, we first state some useful embeddings. By Sobolev embedding, we have 
	\begin{align}\label{e317}
	W^{1, \frac{50}{13}}_x \hookrightarrow L^q_x, \quad \forall q \in \left [ \frac{50}{13}, \frac{50}{3} \right ].  
	\end{align}
	Then by \eqref{e315a} with $p = 15$, \eqref{e317} with $q = \frac{25}{3}$ and interpolation we see that 
	\begin{align*}
	S(I) \hookrightarrow L^4_t L^{10}_x(I). 
	\end{align*}
	Similarly, by \eqref{e315a} with $p = 30$, \eqref{e317} with $q = \frac{50}{3}$ and interpolation we see that 
	\begin{align*}
	S(I) \hookrightarrow L^4_t L^{20}_x(I).
	\end{align*}
	Thus, we have the embedding
	\begin{align}\label{e317b}
	S(I) \hookrightarrow  L^4_t L^{10}_x(I) \cap  L^4_t L^{20}_x(I).
	\end{align}
	We now estimate $u^4 Z_4$.  By 
	\eqref{e314} we have
	\begin{align*}
	\| z(r) u^4 Z(ru) \|_{L^2} &\lesssim \| u^4 \|_{L^2} \\
	&\lesssim \| u \|_{L^{\frac{10}{3}}} \| u \|^3_{L^{15}} \\
	&\lesssim \| u \|_{\dot H^1} \| u \|^3_{W^{1,\frac{30}{7}}},
	\end{align*}
	where we used the Sobolev embedding, \eqref{e315a}, to pass to the final line.  Next, we have 
	\begin{align}
	\begin{split}\label{e317a}%%%%%%%%%%%%%%%%%%%%%%%%
	\| z(r) u^4 Z(ru) \|_{\dot H^1} &= \| \p_r (z(r) u^4 Z(ru)) \|_{L^2} \\
	&\lesssim \| z'(r) u^4 Z(ru) \|_{L^2} + 
	\| z(r) u_r u^3 Z(ru)) \|_{L^2} \\ &\:\:+ 
	\| z(r) u^4 Z'(ru) (ru)_r \|_{L^2}. 
	\end{split}
	\end{align}
	The first two terms above are estimated as in the previous $L^2$ estimate and we obtain 
	\begin{align*}
	\| z'(r) u^4 Z(ru) \|_{L^2} + 
	\| z(r) u_r u^3 Z(ru)) \|_{L^2} &\lesssim [ \| u \|_{L^{\frac{10}{3}}} + \| u_r \|_{L^{\frac{10}{3}}} ] \| u \|^3_{L^{15}} \\
	&\lesssim \| u \|_{\dot H^2 \cap \dot H^1} \| u \|^3_{W^{1,\frac{30}{7}}}.
	\end{align*}
	We use the radial Sobolev embedding on the third term in \eqref{e317a} and obtain 
	\begin{align*}
	\| z(r) u^4 Z'(ru) (ru)_r \|_{L^2} &\lesssim \| u \|^4_{L^{10}} \| (ru)_r \|_{L^{10}} \\
	&\lesssim \| u \|^4_{L^{10}} \| u \|_{\dot H^2}.  
	\end{align*}
	Thus, 
	\begin{align*}
	\| z(r) u^4 Z(ru) \|_{H^1_x} \lesssim \| u \|_{\dot H^1 \cap \dot H^2} \| u \|^3_{W^{1,\frac{30}{7}}} + \| u \|_{\dot H^2}
	\| u \|^4_{L^{10}}.  
	\end{align*}
	As noted previously, $u^4 Z_4$ is a sum of nonlinearities of the previous form estimated whence by integration
	in time and \eqref{e317b} we obtain 
	\begin{align}\label{e319}%%%%%%%%%%%%%%%%%%%%%%%%%%%%%%%%%%%
	\| u^4 Z_4(r, ru) \|_{L^1_t H^1_x(I)} \lesssim  \| \vec u \|_{L^\infty_t \HH (I)} \| u \|_{S(I)}^3 + 
	\| \vec u \|_{L^\infty_t \cl H(I)} \| u \|_{S(I)}^4.
	\end{align} 
	
	Finally, we estimate the quintic term in \eqref{e312}.  We first note that $|Z_5^{(j)}(\vp)| \lesssim 1$ for all $j \geq 0$.
	Then 
	\begin{align*}
	\| u^5 Z_5(ru) \|_{L^2} &\lesssim \| u^5 \|_{L^2} \\
	&\lesssim \| u \|_{L^{\frac{10}{3}}} \| u \|^4_{L^{20}} \\
	&\lesssim \| u \|_{\dot H^1} \| u \|^4_{L^{20}}. 
	\end{align*}
	Next we use radial Sobolev embedding to see that 
	\begin{align*}
	\| u^5 Z_5(ru) \|_{\dot H^1} &\lesssim \| u_r u^4 \|_{L^2} + \| u^5 (ru)_r \|_{L^2} \\
	&\lesssim \| u_r \|_{L^{\frac{10}{3}}} \| u \|^4_{L^{20}}
	+  \| u \|^5_{L^{\frac{25}{2}}} \| (ru)_r \|_{L^{10}} \\
	&\lesssim \| u \|_{\dot H^2} \| u \|^4_{L^{20}} + \| u \|_{\dot H^2} \| u \|^5_{W^{1, \frac{50}{13}}}.
	\end{align*}
	Thus, 
	\begin{align*}
	\| u^5 Z_5(ru) \|_{H^1} \lesssim 
	\| u \|_{\dot H^1 \cap \dot H^2} \| u \|^4_{L^{20}} + \| u \|_{\dot H^2} \| u \|^5_{W^{1, \frac{50}{13}}}.
	\end{align*}
	By integrating in time and \eqref{e317b} we conclude that 
	\begin{align}\label{e320}
	\| u^5 Z_5(ru) \|_{L^1_t H^1_x(I)} \lesssim  \| \vec u \|_{L^\infty_t \HH (I)} \| u \|_{S(I)}^4 + 
	\| \vec u \|_{L^\infty_t \cl H(I)} \| u \|_{S(I)}^5.
	\end{align}
	
	If we define $X(I) = L^\infty_t \HH(I) \cap S(I) \cap W(I)$ and combine 
	\eqref{e312}, \eqref{e313}, \eqref{e316}, \eqref{e319}, \eqref{e320} we obtain 
	\begin{align*}
	\| u \|_{X(I)} \lesssim& \| \vec u(0) \|_{\HH} + \| u \|_{X(I)}^2
	+ \| u \|_{X(I)}^3 \\&+\:\:\:\| u \|_{X(I)}^4 + \| u \|_{X(I)}^5 + \| u \|_{X(I)}^6
	\end{align*}
	which finishes the proof of \eqref{e310} via a standard continuity argument.
	
	A simple variant of the above argument also shows that 
	if $\| u \|_{S(0,\infty)} < \infty$, then 
	\begin{align*}
	\vec w_L(0) = \vec u(0) - \int_0^\infty \vec S_V(-s)(0,Z(\cdot, u(s))) ds 
	\end{align*}
	converges in $\HH$.  Thus, by Duhamel we conclude that 
	\begin{align}
	\vec u(t) = \vec S_V(t) \vec w_L(0) + o_{\HH}(1), \label{p scatt}%%%%%%%%%%%%%%%%%%%%%%%%%%%
	\end{align}
	as $t \rar \infty$, that is, $\vec u(t)$ scatters to a solution to the perturbed $5d$ wave equation \eqref{e38} with $F = 0$.  To extract a solution $v_L(t) = S(t)\vec v_L(0)$ to the free wave equation \eqref{e37} from the perturbed wave $w_L(t) = S_V(t)\vec w_L(0)$,
	we write via Duhamel 
	\begin{align*}
	w_L(t) &= S(t) \vec w_L(0) + \int_0^t S(t-s)(0,Vw_L(s)) ds \\
	&= S(t)\left [ \vec w_L(0) + \int_0^t S(-s)(0, Vw_L(s)) ds \right ]. 
	\end{align*}
	We then take 
	\begin{align*}
	\vec v_L(0) = \vec w_L(0) + \int_0^\infty S(-s)(0, Vw_L(s)) ds
	\end{align*}
	which converges in $\HH$ (see Appendix A).  Then $\vec w_L(t) = \vec v_L(t) + o_{\HH}(1)$ as $t \rar \infty$.  This along with \eqref{p scatt}
	allow us to conclude that if $\| u \|_{S(0,\infty)} < \infty$, then $u$ scatters to a free wave in $\HH$ as 
	$t \rar \infty$.  The fact that the finiteness of $\| u \|_{S(0,\infty)}$ is necessary if $u$ scatters
	as $t \rar \infty$ follows from similar arguments using the fact that $\| v_L \|_{S(0,\infty)} < \infty$ holds
	for any free wave on $\R^{1+5}$. This concludes the proof. 
\end{proof}

\begin{prop}[Perturbation lemma]\label{p33}
	Let $A > 0$.  Then there exist $\e_0 = \e_0(A) > 0$ and $C = C(A) > 0$ such that the following holds.  
	Let $0 < \e < \e_0$, $(u_0,u_1) \in \HH$ and $I \subseteq \R$ with $0 \in I$.  Assume that $\vec v(t) \in C(I; \HH)$ satisfies 
	on $I$ (in the sense of the integral 
	equation) 
	\begin{align}\label{e321}%%%%%%%%%%%%%%%%%%%%%%%%%%%
	\Box v + V(r) v + Z(\cdot, v) = e
	\end{align}
	and that for all $J \Subset I$, $\| v \|_{W(J)} < \infty$ as well as 
	\begin{align}%%%%%%%%%%%%%%%%%%%%%%%%%%%%%%%%%%%%%%%%%%
	\| \vec v(0) \|_{\HH} + \| v \|_{S(I)} \leq A, \label{e322} \\
	\| (u_0,u_1) - \vec v(0) \|_{\HH} + \| e \|_{N(I)} \leq \e. \label{e323}
	\end{align}
	Then there exists a unique solution $\vec u(t) \in C(I; \HH)$ to \eqref{e31} with $\vec u(0) = (u_0,u_1)$ such that 
	\begin{align}
	\| \vec u - \vec v \|_{L^\infty_t \HH(I)} + \| u - v \|_{S(I)} \leq C(A) \e. \label{e324}%%%%%%%%%%%%%%%%%%%%%%%%%%%%%%%
	\end{align}
	In particular, $\| u \|_{S(I)} < \infty$.  
	
\end{prop}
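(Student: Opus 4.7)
The plan is to prove this standard long-time perturbation result by partitioning $I$ into finitely many sub-intervals on which $v$ has small $S$-norm and iterating the Strichartz-based contraction argument used in Proposition~\ref{p32}. First I would set $w := u - v$, which (formally) satisfies
\begin{equation*}
\Box w + V(r) w + \bigl[Z(\cdot, v+w) - Z(\cdot, v)\bigr] = -e, \quad \vec w(0) = (u_0,u_1) - \vec v(0),
\end{equation*}
so that the task reduces to constructing $w \in X(I) := L^\infty_t \HH(I) \cap S(I) \cap W(I)$ with $\|w\|_{X(I)} \leq C(A)\e$; in parallel this produces the solution $u = v + w$ on the full interval $I$, since by the blow-up criterion in Proposition~\ref{p32} the solution $u$ extends as long as its $S$-norm is finite and $\|u\|_S \leq \|v\|_S + \|w\|_S$.

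Given a small threshold $\eta_0 = \eta_0(A) > 0$ to be chosen, I would split $I = \bigcup_{j=0}^{N-1} I_j$ with $t_0 = 0$, $I_j = [t_j, t_{j+1}]$, and $\|v\|_{S(I_j)} \leq \eta_0$; the bound $\|v\|_{S(I)} \leq A$ forces $N = N(A,\eta_0) < \infty$. Applying the Strichartz estimate~\eqref{e39} on each $I_j$ with data at $t_j$ produces
\begin{equation*}
\|w\|_{X(I_j)} \lesssim \|\vec w(t_j)\|_{\HH} + \|e\|_{N(I_j)} + \|Z(\cdot, v+w) - Z(\cdot, v)\|_{N(I_j)}.
\end{equation*}
The crux is a multilinear bound asserting that $\|Z(\cdot, v+w) - Z(\cdot, v)\|_{N(I_j)}$ is controlled by $\|w\|_{X(I_j)}$ multiplied by a polynomial in $\|v\|_{X(I_j)}$ and $\|w\|_{X(I_j)}$, each of whose monomials contains at least one factor of $\|v\|_{S(I_j)}$ or $\|w\|_{S(I_j)}$. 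This is proved by mimicking, term by term, the analysis from the proof of Proposition~\ref{p32} of the five contributions $u^p Z_p$ in $Z$, now with $u$ replaced by appropriate sums of $v$ and $v+w$, and by using the fractional product rule (Lemma~\ref{l32}) and the radial Hardy-Sobolev embedding (Lemma~\ref{l31}). With $\eta_0$ chosen small enough that the $\|v\|_{S(I_j)}$ factors are absorbable on the left, a standard continuity argument yields $\|w\|_{X(I_j)} \leq C_1\bigl(\|\vec w(t_j)\|_{\HH} + \|e\|_{N(I_j)}\bigr)$ for a constant $C_1 = C_1(A)$.

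Iterating across the $N$ sub-intervals and using $\|\vec w(t_{j+1})\|_{\HH} \leq \|w\|_{X(I_j)}$ gives $\|w\|_{X(I_j)} \leq (2C_1)^{j+1}\e$ by induction, so summing yields $\|w\|_{X(I)} \leq C(A)\e$. Choosing $\e_0 = \e_0(A)$ sufficiently small ensures the bootstrap hypothesis $\|w\|_{X(I_j)} \leq \eta_0$ propagates through all $N$ sub-intervals, verifying simultaneously via Proposition~\ref{p32} that $u = v + w$ exists on all of $I$. The main obstacle will be the bookkeeping in the nonlinear difference estimate: the five terms $u^p Z_p(r, ru)$ have distinct spatial decay profiles (from $\langle r\rangle^{-3}$ for $Z_2$ down to scale-invariant for $Z_5$) and each depends on $u$ both directly and through the composition $ru$, so extracting a factor of $w$ from each contribution $Z_p(r, rv + rw) - Z_p(r, rv)$ via a first-order Taylor expansion in the second argument requires careful application of Lemmas~\ref{l31}--\ref{l32}, following the template already laid out in the small-data estimates of Proposition~\ref{p32}.
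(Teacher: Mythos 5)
Your overall framework — Duhamel for the difference $w$, subdivide $I$ into intervals where $v$ is small in the relevant norm, absorb the linear-in-$w$ contribution, iterate — matches the paper's approach, and the multilinear difference estimates do indeed follow the template of the small-data proof. However, there is a concrete gap in the setup of the bootstrap that the paper handles explicitly and that your plan omits.

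You subdivide $I$ only so that $\| v \|_{S(I_j)} \le \eta_0$, and you claim the nonlinear difference bound has each monomial carrying at least one factor of $\| v \|_{S(I_j)}$ or $\| w \|_{S(I_j)}$. That is not what the estimates actually give. Following the small-data estimates term by term (for instance the quadratic contribution, estimated via Lemma~\ref{l32}), the difference $Z(\cdot,v+w)-Z(\cdot,v)$ produces linear-in-$w$ terms such as $\| v \|_{W(I_j)} \| w \|_{S(I_j)}$, whose $v$-coefficient lives in the $W$-norm rather than the $S$-norm. Under your decomposition this coefficient is not small, and in fact it is not even a priori \emph{finite} on all of $I$: the hypotheses only give $\| v \|_{W(J)} < \infty$ for $J \Subset I$, together with a uniform $S$-bound. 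So the absorption step fails at the linear order, and the induction never gets off the ground.

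The paper fixes this with a preliminary bootstrap that you skip: first, treat $v$ itself as a solution of the inhomogeneous linear equation with source $e - Z(\cdot,v)$, subdivide $I$ so that only $\| v \|_{S(I_j)} < \eta$, and use the Strichartz estimate~\eqref{e39} together with the rearrangement that each nonlinear term carries at most one factor of $\| v \|_{X(I_j)}$ with the rest being powers of $\| v \|_{S(I_j)}$; this establishes $\| \vec v \|_{L^\infty_t \HH(I)} + \| v \|_{W(I)} \le C_1(A)$. Only once this global-in-$I$ control on the $W$- and energy-norms of $v$ is available can one re-subdivide $I$ so that $\| v \|_{W(I_j)} + \| v \|_{S(I_j)} < \eta$ simultaneously, which is exactly the smallness needed to make every linear-in-$w$ coefficient $c(\eta)$ and to absorb. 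Without this two-stage argument — first control $\| v \|_{W(I)}$, then re-partition — the perturbation lemma does not close.
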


\begin{proof}
	It suffices to derive a priori estimates
	for $u$, assuming that it exists.  The local well--posedness theory, Proposition \ref{p32}, gives the rest.  We will 
	also assume, without loss of generality, that $I = [0,T)$ for some $T \in (0,\infty]$.  We first note that 
	\eqref{e322} implies that 
	\begin{align*}
	\| \vec v \|_{L^\infty_t \HH(I)} + \| v \|_{W(I)} \leq C(A).  
	\end{align*}
	Indeed, let $\eta > 0$ be small, to be determined, and partition $I$ into subintervals $I = \cup_{j = 1}^{J(A)} I_j$ such that 
	for all $j$, $\| v \|_{S(I_j)} < \eta$.  By Duhamel, we have for $t \in I_j = [t_j, t_{j+1}]$, 
	\begin{align*}
	v(t) = S_V(t-t_j) \vec v(t_j) + \int_{t_j}^t S_V(t-s) \bigl ( 0, e(s) - Z(\cdot, v(s)) \bigr ) ds.
	\end{align*}
	In what follows, we will use the notation $X(J) = L^\infty_t \HH(J) \cap W(J)$.  Then by Strichartz estimates and by arguing as in the proof of Proposition 
	\ref{p32}, we have for every $1 \leq j \leq J(A)$
	\begin{align*}
	\| v \|_{X(I_j)} &\lesssim \| \vec v(t_j) \|_{\HH} + \| Z(\cdot, v) \|_{N(I_j)} + \| e \|_{N(I_j)} \\
	&\lesssim \| \vec v(t_j) \|_{\HH} + \| v \|_{X(I_j)} \Bigl [ \| v \|_{S(I_j)} + 
	\| v \|_{S(I_j)}^2 \\&\hspace{1cm}+ \| v \|_{S(I_j)}^3 + \| v \|_{S(I_j)}^4 +  \| v \|_{S(I_j)}^5 \Bigr ] + \e \\
	&\lesssim \| \vec v(t_j) \|_{\HH} + c(\eta) \| v \|_{X(I_j)} + \e, 
	\end{align*}
	where $c(\eta) \rar 0$ as $\eta \rar 0$.  If we fix $\eta$ sufficiently small, then 
	\begin{align*}
	\| v \|_{X(I_j)} \lesssim \| \vec v(t_j) \|_{\HH} + \e. 
	\end{align*}
	In particular, for all $j$, $\| \vec v(t_{j+1}) \|_{\HH} \lesssim \| \vec v(t_j) \|_{\HH} + \e$ whence
	for all $j$, $\| \vec v(t_j) \|_{\HH} \lesssim C(J) [ \| \vec v(0) \|_{\HH} + 1 ]$.  We conclude that 
	$\| v \|_{X(I)} \leq C_1(A)$ as desired.  
	
	We now turn to establishing \eqref{e323}.     
	Let $w = v - u$.  Then $w$ verifies on $I$
	\begin{align*}
	&\Box w + V(r) w = Z(\cdot, v + w) - Z(\cdot, v) + e, \\
	&\vec w(0) = \vec v(0) - (u_0,u_1).
	\end{align*}
	Let $\eta > 0$ be small, to be determined later, and partition $I$ into subintervals $I = \cup_{j = 1}^{J(A)} I_j$ such that 
	\begin{align*}
	\| v \|_{W(I_j)} + \| v \|_{S(I_j)} < \eta.
	\end{align*}
	By Duhamel, we have for $t \in I_j = [t_j, t_{j+1}]$, 
	\begin{align*}
	w(t) = S_V(t-t_j) \vec w(t_j) + \int_{t_j}^t S_V(t-s) \bigl ( 0, e + Z(\cdot, v + w)- Z(\cdot, v) \bigr ) ds.
	\end{align*}
	In what follows, we will use the notation $X(J) = L^\infty_t \HH(J) \cap W(J) \cap S(J)$.  Then
	by Strichartz estimates and by arguing as in the proof of 
	Proposition \ref{p32} we see that for every $1 \leq j \leq J(A)$ 
	\begin{align*}
	\| w \|_{X(I_j)} &\lesssim \| \vec w(t_j) \|_{\HH} + \| Z(\cdot, v+w) - Z(\cdot, v) \|_{N(I_j)} + \| e \|_{N(I_j)} \\
	&\leq C_0 \| \vec w(t_j) \|_{\HH} + c(\eta) C_1(A) \Bigl [ \| w \|_{X(I_j)} + \| w \|_{X(I_j)}^2 \\
	&\hspace{1cm} \| w \|_{X(I_j)}^3 + \| w \|_{X(I_j)}^4 + \| w \|_{X(I_j)}^5 + \| w \|_{X(I_j)}^6 \Bigr ]  + C_0 \e, 
	\end{align*}
	where $c(\eta) \rar 0$ as $\eta \rar 0$ and $C_0$ is an absolute constant.  The point here is that when estimating the 
	individual nonlinearities in \eqref{e312b} that comprise $\| Z(\cdot, v+w) - Z(\cdot, v) \|_{N(I_j)}$, we always 
	can obtain at least one power of $\| v \|_{W(I_j)} + \| v \|_{S(I_j)}$ which is smaller than $\eta$. Thus, if we choose 
	$\eta$ sufficiently small, we conclude that 
	\begin{align*}
	\| w \|_{X(I_j)}
	&\leq 2C_0 \| \vec w(t_j) \|_{\HH} + 2C_1(A) \Bigl [ \| w \|_{X(I_j)}^2 + \| w \|_{X(I_j)}^3 \\
	&\hspace{1cm} \| w \|_{X(I_j)}^4 + \| w \|_{X(I_j)}^5 + \| w \|_{X(I_j)}^6 \Bigr ]  + 2C_0 \e. 
	\end{align*}
	From the definition of $X(J)$ and the continuity method (i.e. the fact that $X(J)$ is continuous with 
	respect to the upper endpoint), there exists $\delta_0 = \delta_0(A) > 0$ such that if 
	$\| \vec w(t_j) \|_{\HH} + \e < \delta_0$, then 
	\begin{align*}
	\| w \|_{X(I_j)} \leq 4C_0 \bigl [ \| \vec w(t_j) \|_{\HH} + \e \bigr ]. 
	\end{align*}
	Iterating, we see that if for each $j = 1, \ldots, J(A)$, $\| \vec w(t_j) \|_{\HH} + \e < \delta_0$, then we have 
	\begin{align}\label{e325}
	\| w \|_{X(I_j)} \leq 10^J C_0^J [ \| \vec w(0) \|_{\HH} + \e \bigr ] \leq 10^J C_0^J \e < 10^J C_0^J \e_0.  
	\end{align}
	In particular, if we choose $\e_0 = \e_0(A)$ so small so that $10^J C_0^J \e_0 < \delta_0$, then the condition 
	$\| \vec w(t_j) \|_{\HH} + \e < \delta_0$ is always satisfied, and we conclude from \eqref{e325} that 
	\begin{align*}
	\| w \|_{X(I)} \leq C(A) \e,
	\end{align*}
	as desired.  
\end{proof}

\section{Concentration Compactness and Rigidity} \label{s:ccr}

In this section we outline the remainder of the proof of Theorem~\ref{t31}, equivalently Theorem~\ref{t:main}, which now proceeds via the Kenig-Merle concentration compactness/rigidity scheme~\cite{KM06, KM08}. The crucial ingredient in the concentration compactness step  are Bahouri-G\'erard-type profile decompositions~\cite{BG} and the rigidity scheme is based on exterior energy estimates developed by the first author along with Kenig and Schlag in~\cite{KLS}; note that the arguments in the latter reference were greatly influenced by the Duyckaerts, Kenig, Merle ideology from~\cite{DKM4, DKM5}. 

%with the additional caveat that not all profiles obey the same evolutions -- indeed a nonlinear profile that arises via a small scale concentration evolves via the underlying scale invariant equation, which is the 

The implementation of this scheme in the setting of Adkins-Nappi wave maps was carried out in detail in~\cite{L15} and can be easily adapted to cover the more general situation we have now arrive at in the higher degree classes $n \ge 1$ with nontrivial $Q_n$.

\subsection{Concentration Compactness} 

As mentioned in Section 1, the proof of Theorem~\ref{t31} proceeds roughly as follows. We assume that Theorem~\ref{t31} fails and show that there must exist a so-called \emph{critical element}, which is a minimal solution to~\eqref{e31} satisfying~\eqref{eq:cond} that fails to scatter. The end--game here is the conclusion that the critical element possesses additional compactness properties that are then used to reach a contradiction in Section~\ref{s:r}. In particular one can prove the following.

\begin{prop}\label{p:cc}\cite[Corollary 4.5]{L15}  Suppose that Theorem~\ref{t31} fails. %Let $\vec u^*(t) \in \HH:= (\dot{H}^2 \times \dot{H}^1) \cap (\dot{H}^1 \times L^2) (\R^5)$ be the critical element constructed in Proposition~\ref{crit elem}. 
	Then there is  nonzero critical element $\vec u_{\infty}(t) \in \HH:= (\dot{H}^2 \times \dot{H}^1) \cap (\dot{H}^1 \times L^2) (\R^5)$ that is global in both time directions, solves~\eqref{e31}, and moreover 
	\EQ{
		\| u_{\infty} \|_{S( [0, + \infty))} = \| u_{\infty}\|_{S(( - \infty, 0])} = \infty,
	}
	and the set 
	\EQ{\label{K}
		\calK= \{  \vec u_{\infty}(t) \mid t \in \R \} \subset \HH
	}
	is pre-compact in $\HH$. 
\end{prop}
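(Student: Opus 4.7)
The plan is to follow the Kenig--Merle concentration compactness scheme in the form implemented for the degree zero case in \cite{L15}, with the observation that the present setup requires no structural modification since the spectral properties of $H_V$ and the Strichartz estimates of Theorem~\ref{t:lin} are qualitatively identical for every $n \ge 0$. First I introduce the critical threshold
\[
\eta_c := \sup \Bigl\{ \eta \ge 0 \,\Big|\, \text{every solution with } \sup_{t \in I_{\max}} \|\vec u(t)\|_\HH \le \eta \text{ scatters in } \HH \Bigr\},
\]
which is positive by Proposition~\ref{p32}. If Theorem~\ref{t31} fails then $\eta_c < \infty$, and one extracts a minimizing sequence $\vec u_n(t)$ of solutions to \eqref{e31} with $\sup_t \|\vec u_n(t)\|_\HH \to \eta_c$ but $\|u_n\|_{S(I_{\max,n})} = \infty$.

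Next I would apply a Bahouri--G\'erard-type profile decomposition to the bounded sequence $\{\vec u_n(0)\} \subset \HH$, using the linear flow $S_V(t)$ generated by \eqref{e38} in place of the free flow. Because $|V(r)| \lesssim \langle r \rangle^{-6}$ and $H_V$ has no negative spectrum, no eigenvalue, and no resonance at zero (Theorem~\ref{t:lin}(a)), the flow $S_V(t)$ is asymptotically indistinguishable from the free $5d$ flow on profiles concentrating at scales $\lambda_n^j \to 0$, while profiles that remain at the unit scale are evolved directly by $S_V$. Each linear profile is then associated with its unique nonlinear profile, defined either globally or on a maximal interval dictated by the small data theory. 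Orthogonality of the parameters yields a Pythagorean decoupling of both the $\HH$ norm of the nonlinear sum and of the $S$-norm on any compatible time interval, with a Strichartz-negligible remainder as $J$ and $n$ grow.

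The key reduction is that only one nonlinear profile can carry positive norm: if two or more profiles did, each would have $\HH$-norm strictly below $\eta_c$, would therefore scatter by minimality, and then Proposition~\ref{p33} (long-time perturbation) would transfer this scattering to $\vec u_n(t)$ itself, contradicting $\|u_n\|_{S} = \infty$. After passing to a subsequence the decomposition collapses to a single profile whose nonlinear evolution $\vec u_\infty(t)$ satisfies $\sup_t \|\vec u_\infty(t)\|_\HH = \eta_c$ and fails to scatter in at least one time direction; minimality then forces $\vec u_\infty(t)$ to be global with $\|u_\infty\|_{S([0,\infty))} = \|u_\infty\|_{S((-\infty,0])} = \infty$. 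Pre-compactness of $\calK = \{\vec u_\infty(t) \mid t \in \R\}$ follows by applying the same single-profile argument to arbitrary sequences $\{\vec u_\infty(t_n)\}$: a nontrivial remainder in such a decomposition would furnish a second non-scattering solution below the threshold, violating the definition of $\eta_c$. The main obstacle is setting up the profile decomposition adapted to $S_V(t)$; it is conceptually routine and was carried out in \cite{L15} for $n=0$, but it is the step where Theorem~\ref{t:lin} enters essentially, both to justify the decoupling of linear energies and Strichartz norms across profiles and to replace $S_V$ by the free flow at small scales.
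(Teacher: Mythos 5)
Your proposal follows essentially the same approach as the paper, which likewise invokes the Kenig--Merle scheme with Bahouri--G\'erard profile decompositions adapted to $S_V$, relying on Propositions~\ref{p32} and~\ref{p33} to pass from linear to nonlinear profiles; the paper does not spell the argument out but refers to \cite{L15} (degree zero) and to \cite{LOS2} for the handling of the potential in the linear decomposition. One small correction to your opening framing: the paper explicitly notes that there \emph{is} a structural modification relative to $n=0$ — for $n=0$ one has $Q_0\equiv 0$, hence $V\equiv 0$, and only the nonlinear profile decomposition distinguishes scales, whereas for $n\ge 1$ the nonzero potential $V$ forces two types of profiles already at the \emph{linear} level — and this is precisely the distinction you then handle correctly by using the free flow on profiles concentrating at scales $\lambda^j_n\to 0$ while keeping $S_V$ for unit-scale profiles.
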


We omit the proof of Proposition~\ref{p:cc} and instead refer the reader to the detailed argument given in~\cite[Section 4]{L15} in the setting of degree $n=0$ Adkins--Nappi maps. The main technical tools are the small data theory established in Section 4 along with linear and nonlinear Bahouri--Ger\'ard-type profile decompositions adapted to setting of Adkins--Nappi maps.  The nonlinear profile decompositions are obtained from the linear profile decompositions by using Proposition \ref{p32} and Proposition \ref{p33}.  The only substantive difference between the situation here and~\cite{L15} is the presence of the potential $V$ on the left-hand-side of~\eqref{e31}. This gives rise to two different types of profiles at the level of the linear evolution (in \cite{L15} there are two different types of profiles only for the nonlinear profile decomposition; see ~\cite[Section 4.1.2]{L15}). A similar situation where multiple types of linear profiles arise has been treated in~\cite{LOS2} in the setting of non--radial waves with potential on hyperbolic space, and we refer the reader to the arguments in~\cite[Section 4.1]{LOS2} for how to include a potential $V$ into the linear profile decomposition. A straightforward implementation of arguments from~\cite{LOS2} (to deal with the potential) into the detailed scheme in~\cite{L15} covers the proof of Proposition~\ref{p:cc}.

%The additional  terms involving $Q_n$ in the nonlinearity $Z(r, u)$ on the right-hand side of~\eqref{e31} are putely technical inconveniences and do not change the argument in any substantive way. 
%
%
%
%The proofs of the statements in this section are nearly identical to the detailed arguments given in the~\cite{L15}, where the corresponding result for degree $n =0$ Adkins-Nappi maps. The difference is that we have now linearized about a nontrivial stationary Adkins-Nappi map $Q_n$ which gives rise to the potential term $V(r)$ in~\eqref{e31} as well as additional nonlinear terms involving $Q_n$ in the nonlinearity $Z(r, u)$.  
%
%The main tools that we'll use here are linear and nonlinear Bahouri-G\'erard-type profile decompositions, with the additional caveat that not all profiles will obey the same evolutions. Indeed, 
%

\subsection{Rigidity}\label{s:r}

In this section we complete the proof of Theorem~\ref{t31}, equivalently
Theorem \ref{t:main}(b), by
showing that the critical element obtained  in Proposition~\ref{p:cc} cannot exist. 
In particular we prove the following rigidity result. 
\begin{prop}\label{rigid}Let $\vec u(t) \in \HH(\R^5)$ be a global solution to~\eqref{e31} and suppose that the trajectory 
	\EQ{
		K:=\{\vec u(t) \mid t \in \R\}
	}
	is pre-compact in $\HH(\R^5)$. Then $\vec u = (0, 0)$. 
\end{prop}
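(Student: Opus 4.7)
The plan is to implement the channels of energy rigidity scheme from \cite{KLS, L15}, which avoids all dynamical identities (virial, Morawetz, \ldots) and rests on two ingredients: sharp exterior energy estimates for the free $5$D radial wave equation, and the uniqueness of the stationary Adkins--Nappi map $Q_n$ established in Section~\ref{s:elliptic}. Pre-compactness of the trajectory $K$ in $\HH(\R^5)$ yields the uniform localization
\begin{align*}
\sup_{t \in \R} \bigl\| \vec u(t) \bigr\|_{\HH(\{r > R\})} \longrightarrow 0 \quad \text{as } R \to \infty,
\end{align*}
which is the basic smallness input for everything that follows.

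Writing the Duhamel formula
\begin{align*}
\vec u(t) = S_0(t) \vec u(0) - \int_0^t S_0(t-s) \bigl(0, \, V u(s) + Z(\cdot, u(s)) \bigr) \, ds,
\end{align*}
with $S_0$ denoting the free $5$D radial wave propagator, the first key step is to apply the sharp exterior energy estimate of \cite{KLS} to the linear piece $S_0(t) \vec u(0)$. This bounds the energy flux of $S_0(t) \vec u(0)$ into the exterior cone $\{r > |t| + R\}$ in one of the two time directions from below by the norm of $\vec u(0)\rest_{r > R}$ modulo a finite-dimensional obstruction subspace intrinsic to the $5$-dimensional free radial evolution. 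I would then show that the nonlinear Duhamel tail contributes negligibly in the same cones using (i) the exterior smallness above, (ii) the decay $|V(r)| \lesssim \langle r \rangle^{-6}$, (iii) the nonlinearity bounds \eqref{Z bounds}, and (iv) the Strichartz theory of Section~3, following \cite[Section 5]{L15} essentially verbatim. The outcome is that $\vec u(0)\rest_{r > R}$ lies in the exterior obstruction subspace for every sufficiently large $R$.

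Next I would promote this exterior statement to a global, time-independent one. Running the same argument with the time origin shifted to an arbitrary $t_0 \in \R$ (legitimate because $K$ is pre-compact and \eqref{e31} is autonomous) yields the analogous conclusion at every time. Exploiting the explicit structure of the obstruction subspace on each exterior region, together with a unique continuation argument applied to any putative time-stationary profile, one deduces that $u(t, r)$ is in fact time independent, hence a finite-energy radial solution of
\begin{align*}
-\Delta u + V(r) u + Z(r, u) = 0 \quad \text{on } \R^5.
\end{align*}

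Finally, such a stationary $u \in \dot H^1 \cap \dot H^2(\R^5)$ corresponds under $\psi = Q_n + r u$ to a finite-energy radial stationary map $\psi \in \E_n$ solving \eqref{e00}. Theorem~\ref{t01} then forces $\psi = Q_n$, and hence $u \equiv 0$. I expect the main technical obstacle to be the quantitative control of the nonlinear perturbation in the exterior cones: because the exterior energy estimate for $\R^{1+5}$ admits a non-trivial finite-dimensional kernel, the perturbative error from the Duhamel tail must be kept strictly smaller than the exterior flux being detected outside that kernel, uniformly as $R \to \infty$. Only after this quantitative control is in place can the exterior assertion be leveraged to produce the stationarity of $\vec u$, at which point the uniqueness theorem of Section~\ref{s:elliptic} closes the argument.
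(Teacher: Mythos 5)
Your outline correctly identifies the two ingredients (exterior energy estimates from \cite{KLS} and uniqueness of $Q_n$) and the overall channels-of-energy philosophy, but the middle of the argument as you describe it would not close, and it omits the most delicate step of the paper's proof.

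First, the exterior energy estimate does not let you conclude that $\vec u(0)\!\rest_{r>R}$ \emph{lies in} the obstruction plane $P(R)=\{(c_1 r^{-3}, c_2 r^{-3})\}$. Because the nonlinear Duhamel tail is only \emph{small} (not zero) in exterior cones, what one actually gets is the quantitative inequality of Lemma~\ref{project ineq}: for $R$ large and uniformly in $t$,
\begin{align}
\| \pi^{\perp}_R \vec u(t) \|_{\dot{H}^1 \times L^2(r \ge R)}
\lesssim R^{-4} \| \pi_R \vec u(t) \| + R^{-5/2} \| \pi_R \vec u(t) \|^2 + R^{-1} \| \pi_R \vec u(t)\|^3.
\end{align}
One then has to bootstrap this via difference estimates on dyadic annuli for the quantities $v_0(t,r)=r^3 u(t,r)$ and $v_1(t,r)=r\int_r^\infty u_t(t,\rho)\rho\,d\rho$ to extract \emph{asymptotic} limits $r^3 u_0(r)\to\ell_0$ and $r\int_r^\infty u_1\,\rho\,d\rho\to 0$ with explicit rates (Lemma~\ref{spacial decay}); this is a genuine iteration, not a direct membership statement, and it is also here that one must show the limit of $v_1$ is independent of $t$ and equals zero, which requires an averaging-in-time argument using the equation.

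Second, and more importantly, your claim that a ``unique continuation argument'' then yields time-independence of $u$ is not supported, and the case that makes this proposition hard is precisely the one you leave unaddressed: $\ell_0 \neq 0$. In that case $u(0,r)\sim \ell_0 r^{-3}$, which is \emph{not} decaying fast enough to run the compact-support argument, and there is no stationary profile inside $\HH(\R^5)$ with that tail that one could subtract off a priori. The paper resolves this by using Lemma~\ref{l05b} to produce the (in general infinite-energy) stationary solution $\tilde Q = \vp_{\al_n - \ell_0}$ of \eqref{e00} whose asymptotics match $\psi = Q_n + ru$ at spatial infinity, then setting $\tilde u = r^{-1}(\psi - \tilde Q)$; after verifying that $\tilde u$ satisfies the analogous equation, potential/nonlinearity bounds, and vanishing exterior energy, the entire Step~1--Step~2 machinery is rerun for $\tilde u$, now with $\tilde\ell_0 = 0$, yielding $\tilde u \equiv 0$. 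Only then does one learn that $\psi = \tilde Q$ is a finite-energy stationary solution, so Theorem~\ref{t01} forces $\tilde Q = Q_n$, contradicting $\ell_0 \neq 0$. The case $\ell_0 = 0$ is handled separately by first showing $(u_0,u_1)$ is compactly supported (this uses a lower bound iteration on the $v_j$, not just the upper bounds) and then a second channels-of-energy argument on a shrinking annulus near the edge of the support. Your proposal has neither the case split nor the linearization about $\tilde Q$, and without them the argument does not reach the conclusion.
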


\begin{rem}
	In the original azimuth angle, $\vec \psi = \vec Q_n + r \vec u$, Proposition 
	\ref{rigid} states that the only solution $\vec \psi(t)$ to \eqref{eq:an} with precompact trajectory in 
	$\HH_n$ is the stationary Adkins--Nappi map $\vec Q_n$.  Since any stationary solution to \eqref{eq:an} has precompact trajectory in $\cl H_n$, we will in fact use, in an essential way, the uniqueness of $\vec Q_n$ as the \emph{only} stationary solution in the proof of Proposition \ref{rigid}. 
\end{rem}

A simple consequence of the precompactness of the trajectory is that the $\dot{H}^1 \times L^2$ norm of 
$\vec u(t)$ vanishes on any exterior cone $\{ r \ge R + \abs{t}\}$ as $\abs{t} \to \infty$ or $R \rar \infty$. 

\begin{cor}\label{vanish tail} Let $\vec u(t)$ and $K$ be as in Proposition~\ref{rigid}. Then we have
	\begin{align} 
	\begin{split}\label{compact ext en}
	\forall R > 0, \quad 
	\lim_{|t| \rar \infty} \| \vec u(t) \|_{\dot H^1 \times L^2( r \ge R+\abs{t})} &= 0, \\
	\lim_{R \rar \infty} \sup_{t \in \R} \| \vec u(t) \|_{\dot H^1 \times L^2(r \geq R + |t|)} &= 0
	\end{split}
	\end{align}
	where 
	\ant{
		\| \vec u(t) \|_{\dot H^1 \times L^2( r \ge R+\abs{t})}^2 := \int_{R + \abs{t}}^{\infty}( u_t^2(t, r) + u_r^2(t, r) )\, r^4 \, dr.
	}
\end{cor}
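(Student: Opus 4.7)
The plan is to deduce both limits from a single uniform (non-moving) exterior-tail bound for the precompact trajectory $K$: namely,
\begin{equation*}
\lim_{R\to\infty}\sup_{t\in\R}\| \vec u(t)\|_{\dot H^{1}\times L^{2}(r\geq R)}=0, \qquad (\star)
\end{equation*}
where the norm on the right is the \emph{fixed} tail $\big(\int_{R}^{\infty}(u_{t}^{2}+u_{r}^{2})\,r^{4}\,dr\big)^{1/2}$. Granted $(\star)$, both statements of the corollary follow almost immediately. The trivial inclusion $\{r\geq R+|t|\}\subset\{r\geq R\}$ and monotonicity of the restricted norm give the second statement upon sending $R\to\infty$. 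For the first, fix $R>0$ and $\varepsilon>0$, pick $R_{0}$ so that the supremum in $(\star)$ is less than $\varepsilon$, and observe that whenever $|t|\geq R_{0}-R$ one has $R+|t|\geq R_{0}$, so that $\|\vec u(t)\|_{\dot H^{1}\times L^{2}(r\geq R+|t|)}\leq \|\vec u(t)\|_{\dot H^{1}\times L^{2}(r\geq R_{0})}<\varepsilon$.

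The plan to establish $(\star)$ is the standard precompactness argument. Given $\varepsilon>0$, extract from $K\subset\HH$ finitely many times $t_{1},\dots,t_{N}$ such that the $\HH$-balls of radius $\varepsilon/2$ around $\vec u(t_{1}),\dots,\vec u(t_{N})$ cover $K$. Because $\HH$ is continuously embedded in $\dot H^{1}\times L^{2}$, each $\vec u(t_{i})$ belongs to $\dot H^{1}\times L^{2}$, and by absolute continuity of the integral I can choose $R_{i}$ with $\|\vec u(t_{i})\|_{\dot H^{1}\times L^{2}(r\geq R_{i})}<\varepsilon/2$; set $R_{0}:=\max_{i} R_{i}$. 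For an arbitrary $t\in\R$, select an index $i$ with $\|\vec u(t)-\vec u(t_{i})\|_{\HH}<\varepsilon/2$, and apply the triangle inequality restricted to $\{r\geq R_{0}\}$ together with the above embedding to conclude that $\|\vec u(t)\|_{\dot H^{1}\times L^{2}(r\geq R_{0})}<\varepsilon$, uniformly in $t$.

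The argument is soft and I do not anticipate any genuine obstacle: the only point worth emphasizing is that the approximating trajectory points $\vec u(t_{i})$ must genuinely lie in $\dot H^{1}\times L^{2}$ for their individual fixed-radius tails to vanish, which is exactly what the embedding $\HH \hookrightarrow \dot H^{1}\times L^{2}$ delivers. The whole content of the corollary is the promotion of the pointwise (in $t$) tail decay to a uniform bound, which is supplied by the precompactness of $K$.
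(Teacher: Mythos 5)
Your proof is correct, and since the paper gives no proof of this corollary (it simply calls it ``a simple consequence of the precompactness of the trajectory''), your argument is precisely the standard tightness-from-precompactness argument the authors had in mind. The single, slightly stronger fixed-radius uniform bound $(\star)$ is indeed the right intermediate statement, and the deduction of both displayed limits from it via the inclusion $\{r\geq R+|t|\}\subset\{r\geq R\}$ is clean; the only cosmetic point is that one should fix the (harmless) embedding constant of $\HH\hookrightarrow \dot H^1\times L^2(\R^5)$ when choosing the radius of the covering balls.
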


The proof will proceed in several steps following the rigidity argument given in~\cite{KLS}, and is similar to the one presented for the degree $0$ case in~\cite{L15}.
It draws 
from the  ``channels of energy" method pioneered in the seminal works \cite{DKM4, DKM5} 
on the $3d$ energy--critical wave equation. As in the degree $0$ case, a crucial ingredient in the proof is the
exterior energy estimate 
for free radial waves on $\R^{1+5}$ proved in~\cite{KLS}. 

\begin{prop}[Proposition 4.1, \cite{KLS}]\label{linear prop}
	Let $\Box w=0$ in $\R^{1+5}_{t,x}$ with radial data $(f,g)\in \dot H^1\times L^2(\R^5)$.
	Then with some absolute constant $c>0$ one has for every $a>0$
	\EQ{
		\label{R5ext}
		\max_{\pm}\;\limsup_{t\to\pm\I} \int_{r>a+|t|}^\I(w_t^2 + w_r^2)(t,r) r^4\, dr \ge c \| \pi_a^\perp (f,g)\|_{\dot H^1\times L^2(r>a)}^2
	}
	where $\pi_a=\Id-\pi_a^\perp$ is the orthogonal projection onto the plane $$P(a):=\{( c_1 r^{-3}, c_2 r^{-3})\:|\: c_1, c_2\in\R\}$$
	in the space $\dot H^1\times L^2(r>a)$.  The left-hand side of~\eqref{R5ext} vanishes for all data in this plane. 
\end{prop}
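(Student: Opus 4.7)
The strategy is to reduce the radial problem on $\R^{1+5}$ to the free $1$D wave equation on the half line via an explicit intertwining, and then compute the asymptotic exterior energy using d'Alembert's formula.

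First I would make the substitution $v(t,r) := r^2 w(t,r)$, which converts the radial wave equation on $\R^{1+5}$ into the $1$D equation
\[
v_{tt} - v_{rr} + \tfrac{2}{r^2} v = 0, \qquad r > 0.
\]
Using the factorization $\partial_r^2 - \tfrac{2}{r^2} = (\partial_r - 1/r)(\partial_r + 1/r)$ one verifies directly that the auxiliary function
\[
V(t,r) := (\partial_r + 1/r)\, v(t,r) \;=\; r^{-1} \partial_r\bigl(r^3 w(t,r)\bigr) \;=\; 3 r\, w + r^2\, w_r
\]
satisfies the \emph{free} $1$D wave equation $V_{tt} - V_{rr} = 0$ on the half line with Dirichlet condition $V(t,0)=0$. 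Extending $V$ by odd reflection across $r=0$ and applying d'Alembert's formula $V(t,r) = \tfrac12(F_+(r+t) + F_-(r-t))$ with $F_\pm = V_0 \pm \int_0^{\cdot} V_1$, a direct calculation yields
\[
\lim_{t \to +\infty} \int_{r > a+t}(V_t^2 + V_r^2)\, dr = \tfrac12 \int_a^\infty \bigl(V_0'(s) - V_1(s)\bigr)^2\, ds,
\]
and symmetrically $\tfrac12\int_a^\infty (V_0'+V_1)^2\, ds$ as $t \to -\infty$, where $(V_0, V_1) := ((\partial_r+1/r)(r^2 f),\ (\partial_r+1/r)(r^2 g))$. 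Taking the maximum over $\pm$ and using $\max(A,B) \ge \tfrac12(A+B)$ gives
\[
\max_\pm \limsup_{t \to \pm\infty} \int_{r > a + |t|}(V_t^2 + V_r^2)\, dr \;\ge\; \tfrac12 \int_a^\infty \bigl[(V_0'(s))^2 + V_1(s)^2\bigr]\, ds.
\]

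Second I would translate this lower bound back into the 5D picture. The operator $(\partial_r + 1/r) = r^{-1}\partial_r(r\,\cdot)$ has kernel $\mathrm{span}\{1/r\}$, so composed with multiplication by $r^2$ it has kernel exactly $\mathrm{span}\{r^{-3}\}$ in the $w$ variable; hence the intertwiner $\mathcal T : (f,g) \mapsto (V_0, V_1)$ vanishes precisely on $P(a)$. The inverse formula on $\{r > a\}$ reads
\[
r^3 w(t,r) = a^3 w(t,a) + \int_a^r s\, V(t,s)\, ds,
\]
and the two scalar parameters $w(t,a)$, $w_t(t,a)$ correspond bijectively to the two-dimensional plane $P(a)$ in the Cauchy data. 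After substituting this inverse formula into the $\dot H^1 \times L^2(\R^5, r > a)$ norm of $\pi_a^\perp(f, g)$ and invoking the radial Hardy inequality $\int_a^\infty \rho^2\, r^2\, dr \lesssim \int_a^\infty \rho_r^2\, r^4\, dr$ with careful boundary bookkeeping at $r = a$ to annihilate the harmonic direction, one obtains
\[
\int_a^\infty (V_0'(s))^2\, ds + \int_a^\infty V_1(s)^2\, ds \;\gtrsim\; \|\pi_a^\perp(f,g)\|_{\dot H^1 \times L^2(r > a)}^2,
\]
which combines with the first step to give the desired estimate with a universal constant $c > 0$. The vanishing assertion for data in $P(a)$ is a direct computation: the solution is $w(t,r) = (c_1 + c_2 t)\, r^{-3}$ (because $r^{-3}$ is radially harmonic on $\R^5 \setminus \{0\}$), whose exterior energy equals $c_2^2 (a+|t|)^{-1} + 3(c_1 + c_2 t)^2 (a+|t|)^{-3}$ and tends to $0$ as $|t| \to \infty$.

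The main obstacle is the second step --- establishing the norm equivalence between $(V_0, V_1)$ on $(a, \infty)$ and $\pi_a^\perp(f,g)$ on $\{r > a\}$ with constants independent of $a > 0$. The nonlocality of $\mathcal T^{-1}$ (via an integral from $a$ outward) and the appearance of the trace $w(t,a)$ must be handled carefully, and one needs to verify that the integration-by-parts identities converting the $1$D half-line energy of $(V_0, V_1)$ into the weighted $\dot H^1 \times L^2$ norm of $(f, g)$ produce boundary terms at $r = a$ that are absorbed exactly by the projection onto $P(a)$, leaving a scale-invariant constant.
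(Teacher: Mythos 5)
Your Step 1 is fine as a computation about the transformed variable, but it proves a lower bound for the wrong quantity, and the missing link cannot be supplied. The Darboux-type map $V=r^{-1}\partial_r(r^3w)=3rw+r^2w_r$ does solve the free $1$D wave equation, and your d'Alembert limit for $\int_{r>a+|t|}(V_t^2+V_r^2)\,dr$ is correct for smooth data; however this quantity involves $V_t=3rw_t+r^2w_{tr}$ and $V_r=3w+5rw_r+r^2w_{rr}$, i.e.\ second derivatives of $w$, so the $1$D energy of $V$ lives at the $\dot H^2\times\dot H^1(\R^5)$ level (it scales like $\lambda$ under the natural rescaling, while the energy in \eqref{R5ext} scales like $\lambda^3$). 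To conclude you would need $\int_{r>a+|t|}(w_t^2+w_r^2)r^4\,dr\gtrsim\int_{r>a+|t|}(V_t^2+V_r^2)\,dr$, which is never stated and is false: for $(f,g)\in\dot H^1\times L^2$ the transformed data $(V_0,V_1)=(3rf+r^2f_r,\,3rg+r^2g_r)$ need not even have finite $1$D energy. Step 2 is also false as stated: integrating by parts gives $\int_a^\infty V_1^2\,dr=\int_a^\infty g_r^2\,r^4\,dr-3a^3g(a)^2$, an $\dot H^1$-level quantity, and it cannot dominate $\|\pi_a^\perp(0,g)\|_{L^2(r>a)}^2$; e.g.\ with $g(r)=R^{-3/2}\phi(r/R)$, $\phi$ a fixed bump supported in $(1,2)$ and $R\to\infty$, the left side stays $O(1)$ while the right side grows like $R^2$.

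The structural point is that your intertwiner goes the wrong way: differentiating $w$ produces a $1$D wave but raises the regularity, so the whole argument proves an exterior estimate for $\dot H^2\times\dot H^1$-type data rather than the energy-space statement. A reduction of this kind can be made to work only by inverting the transform, e.g.\ working with $h(t,r)=r\int_r^\infty w(t,s)\,s\,ds$ (this is exactly the type of quantity $v_1$ used in Section~5), whose $1$D exterior energy is comparable to that of $w$ modulo boundary terms at $r=a$ that generate the plane $P(a)$; alternatively one represents radial $5$D waves as $w=\frac1r\partial_r\bigl[(\phi(t+r)-\phi(t-r))/r\bigr]$ and computes at the level of $\phi$, which is essentially the route of~\cite{KLS}. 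Note also that the present paper does not prove this proposition at all — it quotes it from~\cite{KLS} — so you are attempting more than the paper does, but the attempt has the gap above. Your final observation that the left-hand side of \eqref{R5ext} vanishes for data in $P(a)$, via $w(t,r)=(c_1+c_2t)r^{-3}$, is correct.
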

\begin{rem}One should note that the appearance of the projections $\pi^{\perp}_a$ on the 
	right-hand-side of~\eqref{R5ext} is due to the fact that $r^{-3}$ is the Newton potential in $\R^5$. 
	To be precise, consider initial data $(f, 0) \in \dot H^1 \times L^2 (r \ge R)$ which satisfies 
	$(f, 0) = (r^{-3}, 0)$ on $r \ge R>0$, with $f(r)$ vanishing on $r \le R/2$. 
	Then the corresponding free evolution $w(t, r)$ is given by $w(t, r) = r^{-3}$ on the 
	region $r \ge R+\abs{t}$ by finite speed of propagation. It is simple to see that the left-hand-side 
	of~\eqref{R5ext} vanishes for this solution as $t \to \pm \infty$. The other family of solutions projected away is generated by taking data
	$(0, g) = (0, r^{-3})$ on the exterior region $r\ge R>0$ which has solution $w(t, r) = tr^{-3}$ on 
	$r \ge R+ \abs{t}$. 
\end{rem}
\begin{rem}
	The orthogonal projections $\pi_a$, $\pi_a^{\perp}$ are given by 
	\ant{
		&\pi_a(f, 0) = a^3r^{-3} f(a), \quad 
		\pi_a(0, g) = a r^{-3} \int_a^{\infty} g( \rho)  \rho \, d \rho,\\
		&\pi_a^{\perp}(f, 0) = f(r) - a^3r^{-3} f(a), \quad \pi_a^{\perp}(0, g) = g(r) - a r^{-3} \int_a^{\infty} g( \rho)  \rho \, d \rho,
	}
	and thus we have 
	\ant{
		\|\pi_a(f,g)\|_{\dot{H}^1 \times L^2(r>a)}^2 &= 3a^3 f^2(a) + a\left(\int_a^{\infty} rg(r) \, dr\right)^2, \\
		\|\pi_a^{\perp}(f, g)\|_{\dot{H}^1 \times L^2(r>a)}^2&= \int^{\infty}_a f_r^2(r) \, r^4 \, dr - 3a^3f^2(a)\\
		& \quad + \int_a^{\infty} g^2(r) \, r^4 \, dr - a\left(\int_a^{\infty} rg(r) \, dr\right)^2.
	}
\end{rem}
\begin{rem}
	Through a delicate analysis Proposition \ref{linear prop} has been generalized to all odd dimensions in \cite{KLLS1}. The generalization was then used to establish stable 
	soliton resolution for all equivariant exterior wave maps in \cite{KLLS2} and later for all equivariant wave maps on a wormhole in \cite{CR16b}, \cite{CR16c}.  What makes the proof (and application) 
	of the generalization of Proposition \ref{linear prop} more difficult is that the subspace of 
	functions $P(a)$ projected away in the estimate grows with the dimension.
\end{rem}

The general idea is that the exterior energy decay~\eqref{compact ext en} can be combined with the exterior energy estimates for the underlying free equation in Proposition~\ref{linear prop}, to obtain precise {\em spacial} asymptotics for $u_0(r)= u(0, r)$  and $u_1(r) = u_t(0, r)$ as $r \to \infty$, namely, 
\EQ{\label{space asym}
	&r^3 u_0(r)  \to \ell_0 \mas r \to \infty,\\
	&r \int_r^{\infty} u_1( \rho) \rho \, d \rho  \to 0 \mas r \to \infty.
}
We then argue by contradiction to show that $\vec u(t) =(0, 0)$ is the only solution with both a 
pre-compact trajectory and initial data with the above asymptotics.  
We now begin the proof of Proposition~\ref{rigid}. 

\subsection{Step 1} In this first step, we will use Proposition~\ref{linear prop} along with Corollary~\ref{compact ext en} to derive 
the following uniform in time estimate on the projection $\pi^{\perp}_R \vec u(t)$.

\begin{lem} \label{project ineq}There exists $R_0>0$ so that for all $R>R_0$ and for all $t \in \R$ we have 
	\begin{align}
	\begin{split}\label{key ineq}
	\| \pi^{\perp}_R \vec u(t) \|_{\dot{H}^1 \times L^2(r \ge R)}
	\lesssim&\: R^{-4} \| \pi_R \vec u(t) \|_{\dot H^1 \times L^2(r \geq R)} \\ 
	&+R^{-5/2} \| \pi_R \vec u(t) \|^2_{\dot H^1 \times L^2(r \geq R)} \\
	&+R^{-1} \| \pi_R \vec u(t)\|_{\dot{H}^1 \times L^2(r \ge R)}^3
	\end{split}
	\end{align}
	where $P(R):= \{(c_1r^{-3}, c_2r^{-3}) \mid, c_1, c_2 \in \R\}$, $\pi_R$ denotes orthogonal projection onto $P(R)$ and $\pi_R^{\perp}$ denotes orthogonal projection onto the orthogonal complement of the plane $P(R)$ in the space $\dot{H}^1 \times L^2(r\geq R)(\R^5)$. We remark that the constant in~\eqref{key ineq} is uniform in $t \in \R$. 
\end{lem}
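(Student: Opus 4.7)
The plan is to implement the channels-of-energy strategy of~\cite{KLS, L15}. Fix $t \in \R$ and let $S(\cdot)$ denote the free radial wave propagator on $\R^{1+5}$. By Duhamel's formula for~\eqref{e31},
\begin{align*}
\vec u(s) \,=\, S(s-t)\vec u(t) \,+\, \int_t^s S(s-\tau)\bigl(0,\,-Vu(\tau)-Z(r,u(\tau))\bigr)\,d\tau.
\end{align*}
Applying Proposition~\ref{linear prop} to the free part $S(s-t)\vec u(t)$ on $\{r\ge R\}$, using Corollary~\ref{vanish tail} to force $\|\vec u(s)\|_{\dot H^1\times L^2(r\ge R+|s-t|)}\to 0$ as $|s-t|\to\infty$, and invoking finite speed of propagation for the Duhamel term reduces the lemma to bounding
\begin{align*}
\|\pi_R^\perp \vec u(t)\|_{\dot H^1\times L^2(r\ge R)}
\,\lesssim\,
\max_{\pm}\int_t^{\pm\infty}\|V u(\tau)+Z(r,u(\tau))\|_{L^2(r\ge R+|\tau-t|)}\,d\tau.
\end{align*}

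The next step is to estimate each piece of the forcing on the exterior cone. For the linear potential piece, $|V(r)|\lesssim\langle r\rangle^{-6}$ together with the exterior weighted Hardy inequality $\int_a^\infty u^2 r^2\,dr \lesssim \int_a^\infty u_r^2 r^4\,dr$ gives $\|Vu(\tau)\|_{L^2(r\ge a)}\lesssim a^{-5}\|u(\tau)\|_{\dot H^1(r\ge a)}$, which after integration in $\tau$ produces the $R^{-4}$ factor. For the polynomial terms $u^p Z_p(r,ru)$ with $p=2,3,4,5$, I would combine the pointwise bounds~\eqref{Z bounds} (encoding the spatial weights $\langle r\rangle^{-3},1,\langle r\rangle^{-1},1$), the $\HH$-boundedness of the trajectory $\calK$ (yielding uniform $L^{10/3}\cap L^{10}$ control of $u$), and radial Sobolev estimates on exterior balls in $\R^5$ to extract the corresponding exterior $L^2$ decays. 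To convert the ambient $u(\tau)$ norms on $\{r\ge R+|\tau-t|\}$ into the claimed powers of $\|\pi_R\vec u(t)\|$, I would split
\begin{align*}
\vec u(\tau)\big|_{r\ge R+|\tau-t|} \,=\, \pi_{R+|\tau-t|}\vec u(\tau) \,+\, \pi_{R+|\tau-t|}^\perp\vec u(\tau)
\end{align*}
and trace the parts back to time $t$ using finite speed of propagation for~\eqref{e31} itself: the Newton-tail $\pi$-part is pointwise controlled by the tail of $\vec u(t)$ on $\{r\ge R\}$, while the $\pi^\perp$-part contributes a term with a small $R$-decaying coefficient that can be absorbed into the left-hand side for $R$ sufficiently large.

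The main obstacle is carrying out this last step rigorously, so that the sharp exponents $R^{-4}\|\pi_R\vec u(t)\|$, $R^{-5/2}\|\pi_R\vec u(t)\|^2$, and $R^{-1}\|\pi_R\vec u(t)\|^3$ all emerge simultaneously in~\eqref{key ineq}. This requires careful bookkeeping of the interplay between (i) the $r$-weights coming from $V$ and the $Z_p$, (ii) the polynomial powers of $u(\tau)$ in the forcing, and (iii) the pointwise $r^{-3}$ Newton-tail structure intrinsic to the image $P(R)$ of $\pi_R$; the three exponents on the right-hand side are precisely the balance dictated by this interplay. A continuity argument in $R$, along with choosing $R_0$ large, is then used to absorb every remaining $\pi^\perp$ contribution into the LHS.
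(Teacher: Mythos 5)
Your high-level strategy — Duhamel for the perturbed equation, Proposition~\ref{linear prop} applied to the free part, Corollary~\ref{vanish tail} to kill the free exterior energy as $|s-t|\to\infty$, then bound the Duhamel remainder — is the right one, and it matches the spirit of the paper's argument. However, there is a genuine gap in how you propose to close the estimate, and it is precisely at the place you flag as the ``main obstacle.''

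The missing ingredient is the paper's \emph{modified exterior Cauchy problem}~\eqref{h eq}, in which both the initial data and the nonlinearity are truncated to the exterior cone $\{r\ge R+|t|\}$ via the cutoff $\chi_{R+|t|}$. This is not a cosmetic device: the original equation~\eqref{e31} is energy-supercritical and has no small-data theory in $\dot H^1\times L^2$, so the time integral of $\|Vu+Z(r,u)\|_{L^2(r\ge R+|\tau-t|)}$ cannot be controlled cleanly from energy data alone. Once the forcing is supported in $r\ge R+|\tau|$, however, the radial Strauss bound~\eqref{Strauss} converts each extra power of $u$ into a factor $\lesssim R^{-3/2}\|u\|_{\dot H^1}$, rendering the problem \emph{subcritical} in the energy. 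This is what makes Lemma~\ref{h small data} possible: a fixed-point argument in the Strichartz space $X(\R)=L^{7/3}_tL^{14/3}_x\cap L^\infty_t(\dot H^1\times L^2)$, applied to the truncated equation with the truncated data $\vec u_R(t_0)$, yields both the uniform bound~\eqref{h small} and the crucial comparison~\eqref{h-hL} between $\vec u_R$ and its free evolution, with every term on the right expressed as a power of the single quantity $\|\vec u_R(t_0)\|_{\dot H^1\times L^2}=\|\vec u(t_0)\|_{\dot H^1\times L^2(r\ge R)}$. Finite speed of propagation then identifies $\vec u_R$ with $\vec u$ on the cone, and the proof is finished by expanding $\|\vec u(t_0)\|^2_{\dot H^1\times L^2(r\ge R)}=\|\pi_R\vec u(t_0)\|^2+\|\pi_R^\perp\vec u(t_0)\|^2$ and absorbing the $\pi_R^\perp$ contributions into the left-hand side for $R\ge R_0$ large.

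Your proposed ``trace back to time $t$ using finite speed of propagation'' is not what is needed, and it would not deliver the clean powers of $\|\pi_R\vec u(t)\|$ appearing in~\eqref{key ineq}. In particular, decomposing $\vec u(\tau)$ on $\{r\ge R+|\tau-t|\}$ into $\pi_{R+|\tau-t|}$ and $\pi^\perp_{R+|\tau-t|}$ pieces and relating them pointwise to $\vec u(t)$ is not how the argument goes; the paper never does this, and the claim that the Newton-tail part is ``pointwise controlled by the tail of $\vec u(t)$'' is neither justified nor, as far as I can tell, true without more input. The correct mechanism is to control \emph{all} of the nonlinear contributions over all times by powers of the single initial exterior norm via the Strichartz norm $\|h\|_{X(\R)}$, which is only available after truncating. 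Without introducing the truncated Cauchy problem~\eqref{h eq}, the estimate does not close, and the concluding sentence in your proposal (``A continuity argument in $R$ ... is then used to absorb ...'') acknowledges rather than resolves the gap.
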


In order to prove Lemma~\ref{project ineq} we will first need a small data theory in the 
energy space $\dot H^1 \times L^2$
for a
modified Cauchy problem which is designed to capture the dynamics of our 
compact solution on the exterior cones $\CC_R:=\{(t, r) \mid r \ge R + \abs{t}\}$.  
The fact that we are only considering the evolution on the exterior cone $\CC_R$ allows us to 
truncate the initial data and the nonlinearity in a way that will render the initial value problem 
subcritical relative to the energy, while still preserving the flow on $\CC_R$ by finite speed of propagation. 

To accomplish this, we first fix a smooth function $\chi \in C^{\infty}([0, \infty))$ where $\chi(r) = 1$ 
for $r \ge 1$ and $\chi(r) = 0$ on $r \le 1/2$. 
We denote $\chi_R(r):= \chi(r/R)$ and for each $R>0$ we consider the modified Cauchy problem: 
\EQ{\label{h eq}
	&h_{tt} - h_{rr} - \frac{4}{r} h_r = Z_R(t, r, h),\\
	& Z_R(t, r, h) = -\chi_{R + |t|}(r) \bigl (V(r) h + Z(r,h)\bigr ),\\
	&\vec h(0) = (h_0, h_1) \in \dot H^1 \times L^2(\R^5),
}
where $V(r)$ and $Z(r,h)$ are defined as in \eqref{e32}--\eqref{e36}. 
The benefit of this modification is that forcing the nonlinearity to have support outside 
the ball of radius $R + |t|$ removes the super-critical nature of the problem which allows 
for a small data theory in $\dot{H}^1 \times L^2$ via Strichartz estimates and 
the usual contraction mapping based argument. In order to formulate the small data 
theory for~\eqref{h eq} we define the norm $X(I)$ where $0 \in I \subset \R$ is a time interval by 
\EQ{ \label{Z norm def}
	\|\vec h\|_{X(I)} = \|h\|_{L^{\frac{7}{3}}_t(I; L^{\frac{14}{3}}_x(\R^5))} + \|\vec h(t) \|_{L^{\infty}_t(I; \dot H^1 \times L^2)}
}

\begin{lem} \label{h small data}
	There exists a $\de_0 > 0$ small enough, so that for all  $R>0$ and all initial data $\vec h(0)=(h_0, h_1) \in \dot{H}^1 \times L^2 (\R^5)$ with 
	\ant{
		\| \vec h(0) \|_{ \dot H^1 \times L^2(\R^5)} < \de_0
	}
	there exists a unique global solution $\vec h(t) \in C(\R; \dot{H}^1 \times L^2$) to \eqref{h eq}. In addition $\vec h(t)$ satisfies 
	\EQ{\label{h small}
		\|\vec h\|_{X(\R)} \lesssim \|\vec h(0)\|_{\dot{H}^1 \times L^2} \lesssim  \de_0.
	}
	Moreover, if we denote the free evolution of the same data by $h_L(t):= S(t) \vec h(0)$, then we have 
	\begin{align}
	\begin{split}\label{h-hL}
	\sup_{t \in \R} \|  \vec h(t) - \vec h_L(t) \|_{ \dot{H}^1 \times L^2 } 
	\lesssim&\: R^{-4} \| \vec h(0) \|_{\dot H^1 \times L^2} 
	+ R^{-5/2} \| \vec h(0) \|^2_{\dot H^1 \times L^2} \\
	&+ R^{-1} \| \vec h(0) \|^3_{\dot H^1 \times L^2} + R^{-7/2} \| \vec h(0) \|^4_{\dot H^1 \times L^2} \\
	&+ R^{-4} \| \vec h(0) \|^5_{\dot H^1 \times L^2}.
	\end{split}
	\end{align}
\end{lem}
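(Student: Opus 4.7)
The plan is to establish small data well-posedness for \eqref{h eq} together with the comparison \eqref{h-hL} via a standard fixed-point argument driven by Strichartz and energy estimates for the free radial $5d$ wave equation, exploiting the crucial fact that $Z_R(t, r, h)$ is spatially supported in $\{r \geq (R+|t|)/2\}$. Combined with the radial Sobolev pointwise bound, this support property forces every term in $Z_R$ to carry a quantitative $R$-factor and renders the small-data problem effectively subcritical in $\dot H^1 \times L^2$.

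First I would record the linear input: the triple $(p, q, \gamma) = (\tfrac{7}{3}, \tfrac{14}{3}, 1)$ is admissible for the $5d$ wave equation since $\tfrac{3}{7} + \tfrac{15}{14} = \tfrac{3}{2}$, so Strichartz together with the standard energy inequality yields
\begin{equation*}
\|h\|_{L^{7/3}_t L^{14/3}_x(\R \times \R^5)} + \|\vec h\|_{L^\infty_t(\dot H^1 \times L^2)} \lesssim \|\vec h(0)\|_{\dot H^1 \times L^2} + \|F\|_{L^1_t L^2_x}
\end{equation*}
for radial solutions to $\Box h = F$ on $\R^{1+5}$. I would also use the radial pointwise decay $|g(r)| \leq C r^{-3/2} \|g\|_{\dot H^1(\R^5)}$ for radial $g$, obtained by writing $g(r) = -\int_r^\infty g'(\rho) \, d\rho$ and applying Cauchy--Schwarz with the measure $\rho^4 d\rho$.

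Next I would prove the key nonlinear estimate
\begin{equation*}
\|Z_R(\cdot, \cdot, h)\|_{L^1_t L^2_x} \lesssim R^{-4} E + R^{-5/2} E^2 + R^{-1} E^3 + R^{-7/2} E^4 + R^{-4} E^5,
\end{equation*}
where $E := \|h\|_{L^\infty_t \dot H^1}$. For each of the five terms in $Z_R$, I combine the pointwise bounds \eqref{pot bound} and \eqref{Z bounds} with the radial Sobolev decay $|h(r)|^2 \lesssim r^{-3}\|h\|_{\dot H^1}^2$, compute the resulting weighted $L^2_x$ norms via elementary integrals of the form $\int_{(R+|t|)/2}^\infty r^{-\alpha} \, dr$, and finally integrate the resulting temporal weight $(R+|t|)^{-\beta_j}$ over $t \in \R$. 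A direct computation shows the exponents come out as claimed; this is the technical heart of the argument.

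With these two estimates in hand, a standard contraction mapping in the complete subset of $X(\R)$ of radius $C\delta_0$ yields a unique global solution $\vec h(t)$ obeying \eqref{h small}, since the linear term in $Z_R$ carries the small factor $R^{-4}$ that can be absorbed into the free term (or alternatively, iteration works for all sufficiently small data since all other terms are at least quadratic in $\delta_0$). The contraction on differences uses identical bounds because $Z_R(t, r, h_1) - Z_R(t, r, h_2)$ is polynomial in $(h_1, h_2)$ factoring through $h_1 - h_2$. Finally, $\vec w := \vec h - \vec h_L$ solves $\Box w = Z_R(t, r, h)$ with zero initial data; the energy estimate gives $\sup_t \|\vec w\|_{\dot H^1 \times L^2} \lesssim \|Z_R(\cdot, \cdot, h)\|_{L^1_t L^2_x}$, and inserting the nonlinear bound together with $E \lesssim \|\vec h(0)\|_{\dot H^1 \times L^2}$ produces \eqref{h-hL}. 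The main obstacle is the bookkeeping in the nonlinear estimate: one must match the decay of $V, Z_2, Z_4$ inherited from the asymptotics of $Q_n$ in Theorem \ref{t01} against the $r^{-3/2}$ radial Sobolev decay of $h$ precisely enough that the spatial integrals $\int_{(R+|t|)/2}^\infty r^{-\alpha} dr$ and the subsequent time integral over $\R$ yield exactly the exponents appearing on the right-hand side of \eqref{h-hL}.
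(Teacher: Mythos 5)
Your proposal is correct and closely parallels the paper's argument (contraction via free $5d$ Strichartz, Duhamel for \eqref{h-hL}), but the way you handle the five terms in $Z_R$ is noticeably more uniform. The paper estimates the linear ($Vh$) and quadratic ($h^2 Z_2$) terms by H\"older in space--time, pairing the cutoff weight $\chi_{R+|t|}r^{-6}$ (resp.\ $\chi_{R+|t|}r^{-3}$) in dual Lebesgue exponents against the Strichartz norm $\|h\|_{L^{7/3}_t L^{14/3}_x}$, and then switches to a mixed Strauss/Strichartz argument for the cubic through quintic terms (peeling off $|h|^{2/3}$, $|h|^{5/3}$, $|h|^{8/3}$ pointwise via radial Sobolev and putting the remaining $h^{7/3}$ in $L^{7/3}_t L^{14/3}_x$). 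You instead apply the radial Sobolev bound $|h(t,r)|\lesssim r^{-3/2}\|h(t)\|_{\dot H^1}$ to \emph{every} power of $h$, reducing each of the five terms to a scalar weighted integral $\int_{(R+|t|)/2}^\infty r^{-\alpha}\,dr$ followed by a time integral. I checked the exponents and they reproduce \eqref{h-hL} exactly; so this is a valid and arguably slightly cleaner variant, at the cost of bounding $\|Z_R(h)\|_{L^1_t L^2_x}$ only in terms of $\|h\|_{L^\infty_t\dot H^1}$ rather than the full $X$-norm — which is still enough to close the contraction since $\|h\|_{L^\infty_t\dot H^1}\leq\|h\|_{X}$. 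One caveat you share with the paper: the linear term yields $R^{-4}\|h\|_X$ which is absorbable only when $R$ is bounded below; the lemma's phrasing ``for all $R>0$'' is thus slightly imprecise, though harmless since the application (Lemma~\ref{project ineq}) only invokes it for $R\geq R_0$ large.
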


Along with Strichartz estimates and the Duhamel formula, the key ingredient in the proof of 
Lemma~\ref{h small data} is the Strauss Lemma for radial functions on $\R^5$: if $f \in \dot{H}^1(\R^5)$ 
then for each $r>0$ we have
\EQ{\label{Strauss}
	\abs{f (r)} \le Cr^{-3/2} \|f\|_{\dot{H}^1}.
}
%The proof of~\eqref{Strauss} follows from the Fundamenal Theorem of Calculus and H\"older's inequality. 
\begin{proof}[Sketch of Proof of Lemma~\ref{h small data}] 
	The small data global well-posedness theory, including the estimate~\eqref{h small} 
	follows from the usual contraction and continuity arguments 
	based on the Strichartz estimates in Theorem~\ref{t:strich}(b). 
	The particular Strichartz estimate we use is  
	\EQ{
		\| v\|_{L^{\frac{7}{3}}_t L^{\frac{14}{3}}_x} + \|\vec v\|_{L^{\infty}_t(\dot{H}^1 \times L^2)}  
		\lesssim \| \vec v(0)\|_{\dot{H}^1 \times L^2} + \|F\|_{L^1_tL^2_x}
	}
	for solutions $\vec v $ to the inhomogenous $5d$ wave equation
	\begin{align*}
	&v_{tt} - \Delta v = F, \quad (t,x) \in R^{1+5}, \\
	&\vec v(0) = (v_0,v_1) \in \dot H^1 \times L^2.
	\end{align*}
	We omit the details and instead focus on the estimate~\eqref{h-hL} which has the same flavor.
	
	By the Strichartz estimate we have 
	\ant{
		\|\vec h(t)- \vec h_L(t)\|_{\dot{H}^1 \times L^2}  &\lesssim \| Z_R(\cdot, \cdot, h)\|_{L^1_t L^2_x}\\
		& \lesssim \| \chi_{R + |t|} V h \|_{L^1_t L^2_x} + \| \chi_{R + |t|} h^2 Z_2(r) \|_{L^1_t L^2_x} \\
		&\:\:+ \| \chi_{R + |t|} h^3 Z_3(r,rh) \|_{L^1_t L^2_x} + \| \chi_{R + |t|} h^4 Z_4(r,rh) \|_{L^1_t L^2_x} \\
		&\:\:+ \| \chi_{R + |t|} h^5 Z_5(rh) \|_{L^1_t L^2_x}.
	}
	We will now estimate each term in the right-hand-side above. By \eqref{pot bound}, H\"older's inequality and 
	simple calculations we have
	\begin{align*}
	\| \chi_{R+|t|} V h \|_{L^1_t L^2_x} &\lesssim 
	\| \chi_{R + |t|} r^{-6} h \|_{L^1_t L^2_x} \\
	&\lesssim \| \chi_{R + |t|} r^{-6} \|_{L^{7/4}_t L^{7/2}_x} \| h \|_{L^{7/3}_t L^{14/3}_x} \\ 
	&\lesssim \Bigl ( \int \Bigl ( \int_{\frac{1}{2}(R + |t|)}^{\infty}
	r^{-21} r^4 dr 
	\Bigr )^{1/2}
	\Bigr )^{4/7} \| h \|_{L^{7/3}_t L^{14/3}_x}  \\
	&\lesssim R^{-4} \| h \|_{X(\R)} \\ &\lesssim R^{-4} \| \vec h(0) \|_{\dot H^1 \times L^2}.
	\end{align*}
	
	We now estimate the nonlinear terms.  By \eqref{Z bounds} and H\"older's inequality we have 
	\begin{align*}
	\| \chi_{R + |t|} h^2 Z_2(r) \|_{L^1_t L^2_x} &\lesssim  
	\| \chi_{R + |t|} r^{-3} h^2 \|_{L^1_t L^2_x} \\
	&\lesssim \| \chi_{R + |t|} r^{-3} \|_{L^{7}_t L^{14}_x} \| h \|^2_{L^{7/3}_t L^{14/3}_x} \\
	&\lesssim R^{-5/2} \| h \|_{X(\R)}^2 \\
	&\lesssim R^{-5/2} \| \vec h(0) \|_{\dot H^1 \times L^2}^2. 
	\end{align*}
	By \eqref{Z bounds} and the Strauss estimate \eqref{Strauss} we have 
	\begin{align*}
	\| \chi_{R + |t|} h^3 Z_3(r,rh) \|_{L^1_t L^2_x} &\lesssim 
	\| \chi_{R + |t|} h^3 \|_{L^1_t L^2_x} \\
	&\lesssim \Bigl ( \sup_{t,r} |\chi_{R + |t|} h| \Bigr )^{2/3} \| h \|^{7/3}_{L^{7/3}_t L^{14/3}_x} \\
	&\lesssim R^{-1} \| \vec h \|_{L^\infty_t(\dot H^1 \times L^2)}^{2/3} \| h \|^{7/3}_{L^{7/3}_t L^{14/3}_x} \\
	&\lesssim R^{-1} \| h \|_{X(\R)}^3 \\
	&\lesssim R^{-1} \| \vec h(0) \|_{\dot H^1 \times L^2}^3. 
	\end{align*}
	In a similar fashion, we estimate the quartic and quintic terms: 
	\begin{align*}
	\| \chi_{R + |t|} h^4 Z_4(r,rh) \|_{L^1_t L^2_x} &\lesssim 
	\| \chi_{R + |t|} r^{-1} h^4 \|_{L^1_t L^2_x} \\
	&\lesssim R^{-1} \Bigl ( \sup_{t,r} |\chi_{R + |t|} h| \Bigr )^{5/3} \| h \|^{7/3}_{L^{7/3}_t L^{14/3}_x} \\
	&\lesssim R^{-7/2} \| \vec h \|_{L^\infty_t(\dot H^1 \times L^2)}^{5/3} \| h \|^{7/3}_{L^{7/3}_t L^{14/3}_x} \\
	&\lesssim R^{-7/2} \| h \|_{X(\R)}^4 \\
	&\lesssim R^{-7/2} \| \vec h(0) \|_{\dot H^1 \times L^2}^4, 
	\end{align*}
	and 
	\begin{align*}
	\| \chi_{R + |t|} h^5 Z_5(rh) \|_{L^1_t L^2_x} &\lesssim 
	\| \chi_{R + |t|} h^5 \|_{L^1_t L^2_x} \\
	&\lesssim \Bigl ( \sup_{t,r} |\chi_{R + |t|} h| \Bigr )^{8/3} \| h \|^{7/3}_{L^{7/3}_t L^{14/3}_x} \\
	&\lesssim R^{-4} \| \vec h \|_{L^\infty_t(\dot H^1 \times L^2)}^{8/3} \| h \|^{7/3}_{L^{7/3}_t L^{14/3}_x} \\
	&\lesssim R^{-4} \| h \|_{X(\R)}^5 \\
	&\lesssim R^{-4} \| \vec h(0) \|_{\dot H^1 \times L^2}^5. 
	\end{align*}
	This completes the proof of \eqref{h-hL}.
\end{proof}

\begin{rem}\label{u=uR}
	We remark that for every $t \in \R$ the nonlinearity $Z_R$ in~\eqref{h eq} satisfies
	\ant{
		Z_R(t,r, u) = - V(r) u - Z(r,u), \quad \forall r \ge R + \abs{t}.
	}
	Thus, by finite speed of propagation we can conclude that solutions to~\eqref{h eq} and~\eqref{e31} 
	are equal on the exterior cone $\CC_R:=\{(t, r) \mid r \ge R + \abs{t}\}$.
\end{rem}
We now prove Lemma~\ref{project ineq}. 
\begin{proof}[Proof of Lemma~\ref{project ineq}] We will prove the Lemma first for time $t=0$. The proof for all times $t \in \R$ with $R>R_0$ independent of $t$ will follow from the pre-compactness of the trajectory, 
	in particular Corollary \ref{vanish tail}. We begin by defining truncated initial data, $\vec u_R(0) = (u_{0, R}, u_{1, R})$ by
	\EQ{
		&u_{0, R}(r):= \begin{cases} u_0(r) \mfor r \ge R\\ u_0(R) \mfor 0 \le r \le R \end{cases},\\
		&u_{1, R}(r):= \begin{cases} u_1(r) \mfor r \ge R\\ 0 \mfor 0 \le r \le R \end{cases}.
	}
	This truncated initial data satisfies  
	\EQ{
		\| \vec u_R(0)\|_{\dot{H}^1 \times L^2} = \|\vec u(0) \|_{\dot{H}^1 \times L^2(r \ge R)}
	}
	and so we can choose $R_0>0$ large enough so that for all $R \ge R_0$ we have 
	\EQ{
		\| \vec u_R(0)\|_{\dot{H}^1 \times L^2} \le \de \le \min(\de_0, 1)
	}
	where $\de_0$ is chosen as in Lemma~\ref{h small data}. By Lemma~\ref{h small data} there exists a unique solution $\vec u_R(t)$ to~\eqref{h eq} with initial data $\vec u_R(0)$ which satisfies~\eqref{h small} and \eqref{h-hL}. We note that by finite speed of propagation we have 
	\EQ{\label{u=uR eq}
		\vec u_R(t,r) = \vec u(t,r), \quad \forall (t, r) \in \CC_R.
	}
	Let $\vec u_{R, L}(t):= S(t) \vec u_{R}(0)$ be the free evolution with initial data $\vec u_R(0)$. By the triangle inequality and~\eqref{u=uR eq}, we have 
	\ant{
		\| \vec u(t) \|_{\dot{H}^1\times L^2(r \ge R+\abs{t})} &= \| \vec u_R(t) \|_{\dot{H}^1\times L^2(r \ge R+\abs{t})} \\
		&\ge \|u_{R, L}(t) \|_{\dot{H}^1\times L^2(r \ge R+\abs{t})} \\&\:\:- \|\vec u_R(t)- u_{R, L}(t) \|_{\dot{H}^1\times L^2(r \ge R+\abs{t})} 
	}
	Applying~\eqref{h-hL} to $\vec u_{R}(t)$ and taking $R>R_0$ large enough,  we can conclude 
	\ant{
		\|\vec u_R(t)- u_{R, L}(t) \|_{\dot{H}^1\times L^2(r \ge R+\abs{t})}  &\le  \|\vec u_R(t)- u_{R, L}(t) \|_{\dot{H}^1\times L^2} \\
		&\lesssim R^{-4} \| \vec u_{R}(0)\|_{\dot{H}^1 \times L^2} +R^{-5/2} \| \vec u_{R}(0)\|_{\dot{H}^1 \times L^2}^2 \\
		&\:\:+
		R^{-1} \| \vec u_{R}(0)\|_{\dot{H}^1 \times L^2}^3 +R^{-7/2} \| \vec u_{R}(0)\|_{\dot{H}^1 \times L^2}^4\\
		&\:\: + R^{-4} \| \vec u_R(0) \|_{\dot H^1 \times L^2}^5\\
		&\lesssim R^{-4} \| \vec u(0)\|_{\dot{H}^1 \times L^2(r \geq R)} \\ &\:\:+R^{-5/2} \| \vec u(0)\|_{\dot{H}^1 \times L^2(r \geq R)}^2 \\
		&\:\:+
		R^{-1} \| \vec u(0)\|_{\dot{H}^1 \times L^2(r \geq R)}^3.
	}
	Combining the previous two inequalities gives
	\begin{align*}
	\| \vec u(t) \|_{\dot{H}^1\times L^2(r \ge R+\abs{t})} &\ge  \|u_{R, L}(t) \|_{\dot{H}^1\times L^2(r \ge R+\abs{t})} - C_0R^{-4}\| \vec u(0)\|_{\dot{H}^1 \times L^2(r \ge R)} \\
	&\:\:- C_0R^{-5/2}\| \vec u(0)\|^2_{\dot{H}^1 \times L^2(r \ge R)} 
	- C_0R^{-1}\| \vec u(0)\|^3_{\dot{H}^1 \times L^2(r \ge R)}.
	\end{align*}
	Letting $t \to \pm \infty$ -- the choice determined by Proposition~\ref{linear prop} -- we can use Proposition~\ref{linear prop} to give a lower bound for the right-hand side and use Corollary~\ref{vanish tail} to see that the left-hand-side above goes to zero as $\abs{t} \to \infty$ and deduce the estimate
	\begin{align*}
	\| \pi^{\perp}_R  \vec u_{R}(0) \|_{ \dot{H}^1 \times L^2(r \ge R)} &\lesssim R^{-4}\| \vec u(0)\|_{\dot{H}^1 \times L^2(r \ge R)}
	+ R^{-5/2}\| \vec u(0)\|_{\dot{H}^1 \times L^2(r \ge R)}^2 \\
	&\:\:+ R^{-1}\| \vec u(0)\|_{\dot{H}^1 \times L^2(r \ge R)}^3.
	\end{align*}
	Since $\vec u_R(0)= \vec u(0)$ on $\{r \ge R\}$, the above implies that  
	\begin{align*}
	\| \pi^{\perp}_R  \vec u(0) \|_{ \dot{H}^1 \times L^2(r \ge R)} &\lesssim R^{-4}\| \vec u(0)\|_{\dot{H}^1 \times L^2(r \ge R)}
	+ R^{-5/2}\| \vec u(0)\|_{\dot{H}^1 \times L^2(r \ge R)}^2 \\
	&\:\:+ R^{-1}\| \vec u(0)\|_{\dot{H}^1 \times L^2(r \ge R)}^3.
	\end{align*}
	Finally, we can use the orthogonality of $\pi_R$
	\begin{align*}
	\| \vec u(0) \|_{\dot H^1 \times L^2(r \geq R)}^2 
	=  \| \pi_R \vec u(0) \|_{\dot H^1 \times L^2(r \geq R)}^2 + 
	+  \| \pi_R^{\perp} \vec u(0) \|_{\dot H^1 \times L^2(r \geq R)}^2 
	\end{align*}
	to expand the right-hand side of the previous estimate and choose $R_0$ large enough so that we can absorb the $\pi_R^{\perp}$ terms on the right-hand-side into the left-hand-side to deduce that 
	\begin{align*}
	\begin{split}
	\| \pi^{\perp}_R \vec u(0) \|_{\dot{H}^1 \times L^2(r \ge R)}
	\lesssim&\: R^{-4} \| \pi_R \vec u(0) \|_{\dot H^1 \times L^2(r \geq R)} \\ 
	&+R^{-5/2} \| \pi_R \vec u(0) \|^2_{\dot H^1 \times L^2(r \geq R)} \\
	&+R^{-1} \| \pi_R \vec u(0)\|_{\dot{H}^1 \times L^2(r \ge R)}^3.
	\end{split}
	\end{align*}
	This proves the lemma for $t=0$. To see that the inequality holds for all $t \in \R$ we note that by the pre-compactness of $K$ we can choose $R_0 = R_0( \de_0)$ so that  for all $R \ge R_0$ we have 
	\ant{
		\| \vec u(t) \|_{ \dot{H}^1 \times L^2( r \ge R)} \le \min(\de_0, 1)
	} 
	uniformly in $t \in \R$. Now we just repeat the entire argument above with truncated initial data at time $t=t_0$ and $R \ge R_0$ given by 
	\ant{
		&u_{0, R, t_0}(r):= \begin{cases} u(t_0, r) \mfor r \ge R\\ u_0(t_0, R) \mfor 0 \le r \le R \end{cases},\\
		&u_{1, R, t_0}(r):= \begin{cases} u_t(t_0, r) \mfor r \ge R\\ 0 \mfor 0 \le r \le R \end{cases}.
	}
	This finishes the proof. 
\end{proof}

\subsection{Step 2}
Next, we will use the estimates in Lemma~\ref{project ineq} to prove the spatial asymptotics of $\vec u(0, r)$ as $r \to \infty$ described in~\eqref{space asym}. The result of this step is the following.

\begin{lem}\label{spacial decay}
	Let $\vec u(t)$ be as in Proposition~\ref{rigid}. Then there exists $\ell_0 \in \R$ so that 
	\begin{align}
	r^3 u_0(r) \to \ell_0 \mas r \to \infty, \label{u0 decay}\\
	r \int_r^\I u_1( \rho) \rho \, d \rho \to 0 \mas r \to \infty. \label{u1 decay}
	\end{align}
	Moreover the above convergence occurs at the rates 
	\begin{align}
	\label{u0 rate}\abs{r^3 u_0(r) - \ell_0}  = O(r^{-4}) \mas r \to \infty, \\
	\label{u1 rate}\abs{r \int_r^\I u_1( \rho) \rho \, d \rho } = O(r^{-2}) \mas r \to \infty.
	\end{align}
\end{lem}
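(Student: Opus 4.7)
The plan is to parametrize the projection onto $P(R)$ by the two scalars
$$\ell(R):=R^3 u_0(R),\qquad m(R):=R\int_R^\infty u_1(\rho)\,\rho\,d\rho,$$
so that the Remark after Proposition~\ref{linear prop} gives $\pi_R\vec u(0)=(\ell(R)r^{-3},m(R)r^{-3})$ on $\{r\ge R\}$ and
$$A(R)^2:=\|\pi_R\vec u(0)\|^2_{\dot H^1\times L^2(r\ge R)}=3R^{-3}\ell(R)^2+R^{-1}m(R)^2.$$
In this notation \eqref{u0 decay}--\eqref{u1 rate} amount to the existence of $\ell_0\in\R$ with $|\ell(R)-\ell_0|=O(R^{-4})$ together with $m(R)\to 0$ at rate $O(R^{-2})$.

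The first step converts Lemma~\ref{project ineq} into dyadic Cauchy-type estimates for $\ell$ and $m$. Decomposing $\vec u(0)|_{r\ge R}=\pi_R\vec u(0)+(e_0,e_1)$ with $(e_0,e_1):=\pi_R^\perp\vec u(0)$, the definition of $\pi_R$ yields $e_0(R)=0$ and $\int_R^\infty e_1(\rho)\rho\,d\rho=0$. Cauchy--Schwarz in the measure $\rho^4\,d\rho$ then gives, uniformly in $r\ge R$,
$$|e_0(r)|\lesssim R^{-3/2}\|\pi_R^\perp\vec u(0)\|,\qquad \Big|\int_r^\infty e_1(\rho)\rho\,d\rho\Big|\lesssim r^{-1/2}\|\pi_R^\perp\vec u(0)\|,$$
and the identities $\ell(r)-\ell(R)=r^3 e_0(r)$ and $m(r)-m(R)=r\int_r^\infty e_1(\rho)\rho\,d\rho$, evaluated at $r=2R$, combined with Lemma~\ref{project ineq}, produce
\begin{align*}
|\ell(2R)-\ell(R)|&\lesssim R^{-5/2}A(R)+R^{-1}A(R)^2+R^{1/2}A(R)^3,\\
|m(2R)-m(R)|&\lesssim R^{-7/2}A(R)+R^{-2}A(R)^2+R^{-1/2}A(R)^3.
\end{align*}

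Next I will run a bootstrap on these increments. The crude a priori bounds $|\ell(R)|\lesssim R^{3/2}$ and $|m(R)|\lesssim R^{1/2}$ follow from the radial Sobolev inequality on $\dot H^1(\R^5)$ and Cauchy--Schwarz together with $\|\vec u(0)\|_{\dot H^1\times L^2}<\infty$; they yield $A(R)\lesssim 1$, and dyadic summation improves them to $|\ell(R)|\lesssim R^{1/2}$ and the existence of $m_\infty:=\lim_{R\to\infty}m(R)$ with $|m(R)-m_\infty|\lesssim R^{-1/2}$. Feeding these back gives $A(R)\lesssim R^{-1/2}$, whence a second pass yields $\ell(R)\to\ell_0$ with $|\ell(R)-\ell_0|\lesssim R^{-1}$ and $|m(R)-m_\infty|\lesssim R^{-2}$. \emph{Provided} $m_\infty=0$, the refined bound $A(R)\lesssim R^{-3/2}$ feeds into the $\ell$-increment to give the sharp rate $|\ell(R)-\ell_0|=O(R^{-4})$ asserted in \eqref{u0 rate}, while \eqref{u1 rate} is then immediate.

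The main obstacle will be the identification $m_\infty=0$: the bootstrap itself produces only a finite limit, and a tail $u_1\sim m_\infty r^{-3}$ is entirely compatible with $u_1\in\dot H^1\cap L^2(\R^5)$. To rule out $m_\infty\ne 0$ I will use that $\vec u$ is a genuine nonlinear solution whose trajectory is precompact in the stronger space $\HH=(\dot H^2\times\dot H^1)\cap(\dot H^1\times L^2)$. Under the free $\R^{1+5}$ flow the analogous charge $m$ measured on the outermost cone $\{r\ge R+|t|\}$ is conserved in $t$, so applying Lemma~\ref{project ineq} at time $t$ and letting $|t|\to\infty$ --- while invoking the uniform exterior-cone vanishing in Corollary~\ref{vanish tail} (upgraded via precompactness in $\HH$ to beat the $(R+|t|)^{1/2}$ loss in the Cauchy--Schwarz bound for $m$) --- forces $m_\infty=0$ and completes the proof.
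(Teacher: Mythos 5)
Your bootstrap structure for the dyadic increments of $\ell(R)=v_0(0,R)$ and $m(R)=v_1(0,R)$ tracks the paper's argument closely (the paper works with $v_0(t,r), v_1(t,r)$, Lemma~\ref{v project} and Corollaries~\ref{diff esti}--\ref{diff esti 2}, but the content is the same), and you correctly isolate the crux: showing $m_\infty=0$. Up to that point, the argument is sound.

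The gap is precisely at $m_\infty=0$, and the mechanism you sketch does not close it. First, precompactness of $K$ in $\HH$ gives the qualitative vanishing in Corollary~\ref{vanish tail} but no decay \emph{rate}: knowing that $\|\vec u(t)\|_{\dot H^1\times L^2(r\ge R+|t|)}\to 0$ uniformly gives nothing against a multiplicative $(R+|t|)^{1/2}$ loss. Having $\vec u(t)$ bounded or precompact in the stronger space $\dot H^2\times\dot H^1$ does not help either; higher Sobolev regularity for radial functions in $\R^5$ gives \emph{worse} pointwise decay, not better (radial $\dot H^2(\R^5)$ only yields $O(r^{-1/2})$). Second, the ``conservation of the charge on the outermost cone'' is a statement about the free flow; your solution is not free outside the cone (there is $V$ and $Z$), and establishing approximate conservation for the nonlinear flow is exactly the nontrivial step you are trying to avoid.

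The paper supplies that missing step with two genuinely different ideas. Claim~\ref{ell1 constant} shows $\ell_1(t)$ (your $m_\infty$, now defined at every time slice) is independent of $t$ by substituting $u_{tt}$ from the wave equation \eqref{e31}, integrating by parts in $r$, and using the uniform-in-$t$ pointwise decay $|u(t,r)|\lesssim r^{-17/6}$ that comes from the bootstrap run at \emph{all} times (not just $t=0$, which is another place your proposal is too narrow). Then Claim~\ref{ell1=0} averages the defining identity $\ell_1 = R\int_R^\infty u_t(t,\rho)\rho\,d\rho + O(R^{-2})$ over $t\in[0,T]$; the $t$-integral of $u_t$ telescopes to $u(T,\cdot)-u(0,\cdot)$, the resulting spatial integral is controlled by the same uniform pointwise decay, and the double limit $T=R\to\infty$ forces $\ell_1=0$. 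Neither the $t$-independence via the equation nor the $T=R$ averaging trick appears in your sketch, and without them the identification $m_\infty=0$, and hence the $O(r^{-4})$ rate in \eqref{u0 rate}, is not established.
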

For the proof of Lemma~\ref{spacial decay}, we first define the following 
quantities: 
\EQ{\label{v def}
	&v_0(t, r):= r^3 u(t, r), \\
	&v_1(t, r):= r \int_r^\I u_t( t, \rho) \rho \, d \rho.
}
To simplify notation, we will often write $v_0(r) :=v_0(0, r)$ and $v_1(r):=v_1(0, r)$. With these definitions we have 
\EQ{\label{v u project} 
	&\| \pi_R \vec u(t)\|^2_{\dot{H}^1 \times L^2(r \ge R)} = 3R^{-3} v_0^2(t, R) + R^{-1}v_1^2(t, R),\\
	&\| \pi_R^{\perp} \vec u(t)\|^2_{\dot{H}^1 \times L^2(r \ge R)} = \int_R^\I \left( \frac{1}{r} \p_r v_0(t, r) \right)^2 \, dr + \int_R^\I \left( \p_r v_1(t ,r) \right)^2 dr.
}
Moreover, the conclusions of Lemma \ref{spacial decay} we wish to prove can be rewritten as 
\begin{align*}
|v_0(t,r) - \ell_0| &= O(r^{-4}) \mas r \rar \infty, \\
|v_1(t,r)| &= O(r^{-2}) \mas r \rar \infty, 
\end{align*}
where the $O(\cdot)$ terms are uniform in $t \in \R$. 

First, we rewrite the conclusions of Lemma~\ref{space asym} in terms of $(v_0, v_1)$. 
\begin{lem}\label{v project}
	Let $(v_0, v_1)$ be defined as in~\eqref{v def}. Then there exists $R_0>0$ such that for all $R \ge R_0$ we have 
	\ant{
		\int_R^\I & \left( \frac{1}{r} \p_r v_0(t, r) \right)^2 + \left( \p_r v_1(t ,r) \right)^2 \, dr \\ &\lesssim 
		R^{-8} \left(3R^{-3} v_0^2(t, R) + R^{-1}v_1^2(t, R)\right) \\
		&\:\:+ R^{-5} \left(3R^{-3} v_0^2(t, R) + R^{-1}v_1^2(t, R)\right)^2 \\
		&\:\:+ R^{-2} \left(3R^{-3} v_0^2(t, R) + R^{-1}v_1^2(t, R)\right)^3 \\
		&\lesssim R^{-11} |v_0(t,R)|^2 + 
		R^{-11} |v_0(t,R)|^4 + R^{-11} |v_0(t,R)|^6 \\
		&\:\:+ R^{-9} |v_1(t,R)|^2 + 
		R^{-7} |v_1(t,R)|^4 + R^{-5} |v_1(t,R)|^6 
	}
	with a constant that is uniform in $t \in \R$. 
\end{lem}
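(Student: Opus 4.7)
The plan is to deduce Lemma~\ref{v project} as a direct algebraic consequence of Lemma~\ref{project ineq} together with the two identities listed in~\eqref{v u project}. Fix $R_0$ as in Lemma~\ref{project ineq} and let $R \ge R_0$ and $t \in \R$ be arbitrary. First I would square the estimate~\eqref{key ineq}: using $(a+b+c)^2 \lesssim a^2 + b^2 + c^2$, one obtains
\begin{align*}
\| \pi_R^\perp \vec u(t) \|_{\dot H^1 \times L^2(r \ge R)}^2
\lesssim &\: R^{-8} \| \pi_R \vec u(t) \|^2_{\dot H^1 \times L^2(r \ge R)} \\
&+ R^{-5} \| \pi_R \vec u(t) \|^4_{\dot H^1 \times L^2(r \ge R)} \\
&+ R^{-2} \| \pi_R \vec u(t) \|^6_{\dot H^1 \times L^2(r \ge R)},
\end{align*}
with a constant uniform in $t$. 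Substituting both identities in~\eqref{v u project} then immediately yields the first inequality in the statement of the lemma.

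For the second inequality I would expand each power of $3R^{-3}v_0^2(t,R) + R^{-1} v_1^2(t,R)$ using $(a+b)^k \lesssim a^k + b^k$, obtaining
\begin{align*}
(3R^{-3}v_0^2 + R^{-1}v_1^2)^2 &\lesssim R^{-6} v_0^4 + R^{-2} v_1^4, \\
(3R^{-3}v_0^2 + R^{-1}v_1^2)^3 &\lesssim R^{-9} v_0^6 + R^{-3} v_1^6,
\end{align*}
and multiply through by the prefactors $R^{-8}$, $R^{-5}$, $R^{-2}$ respectively. Collecting the powers of $R$ then produces exactly the six terms
\begin{align*}
R^{-11}v_0^2(t,R),\quad R^{-9}v_1^2(t,R),\quad R^{-11}v_0^4(t,R),\quad R^{-7}v_1^4(t,R),\quad R^{-11}v_0^6(t,R),\quad R^{-5}v_1^6(t,R)
\end{align*}
appearing on the right-hand side of the claimed bound.

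This step is essentially an exercise in bookkeeping; there is no serious obstacle once Lemma~\ref{project ineq} is in hand. The only items worth double-checking carefully are the identities~\eqref{v u project} themselves, which follow from the explicit formulas for $\pi_R$ and $\pi_R^\perp$ recorded in the preceding remark, together with the substitutions $v_0 = r^3 u$ and $v_1 = r\int_r^\infty u_t(\rho)\rho\,d\rho$ and a single integration by parts to handle cross terms such as $\int_R^\infty r^3 u\, u_r\,dr$ and $\int_R^\infty r F(r) u_t(r)\, dr$ with $F(r):=\int_r^\infty u_t(\rho)\rho\,d\rho$. The relevant boundary terms at $r = \infty$ vanish thanks to the a priori spatial decay of $\vec u(t)$ supplied by the precompactness of $\mathcal{K}$ via Corollary~\ref{vanish tail}.
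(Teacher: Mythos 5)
Your proposal is correct and matches the paper's (implicit) argument exactly: the paper states Lemma~\ref{v project} immediately after presenting the identities~\eqref{v u project} and Lemma~\ref{project ineq}, intending the reader to square~\eqref{key ineq}, substitute, and expand, which is precisely what you do. Your remark about verifying~\eqref{v u project} via integration by parts with boundary terms killed by the decay from Corollary~\ref{vanish tail} is also the right check and is consistent with the explicit formulas for $\pi_a$, $\pi_a^\perp$ given in the remark following Proposition~\ref{linear prop}.
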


We will use Lemma~\ref{v project} to prove difference estimates. Let $\de_1>0$ be small, to be determined below, so that $\de_1 \le \de_0^2$ where $\de_0$ is as in the small data theory in Lemma~\ref{h small data}. Let $R_1 = R_1(\de _1)$ be large enough so that for all $R \ge R_1$ we have 
\EQ{\label{R1 de1}
	&\| \vec u(t) \|_{\dot{H}^1 \times L^2(r \ge R)}^2 \le  \de_1 \le \de_0^2 \quad \forall t,\\
	&R_1^{-3} \le  \de_1.
}
Such an $R_1$ exists by Corollary \ref{vanish tail}. Note that by \eqref{v u project} and \eqref{R1 de1} we have for all $R \geq R_1$
\begin{align}
\begin{split}\label{v grow1}
|v_0(t,R)| &\lesssim R^{3/2} \de_1, \\
|v_1(t,R)| &\lesssim R^{1/2} \de_1,
\end{split}	
\end{align}
uniformly in $t \in \R$. 

\begin{cor}\label{diff esti}
	Let $R_1$ be as above. Then for all $R_1 \le r \le r' \le 2r$ and for all $t \in \R$  we have 
	\EQ{\label{v0 diff1}
		\abs{ v_0(t, r) - v_0(t, r') } &\lesssim r^{-4} |v_0(t,r)| + 
		r^{-4} |v_0(t,r)|^2 + r^{-4} |v_0(t,r)|^3 \\
		&\:\:+ r^{-3} |v_1(t,r)| + 
		r^{-2} |v_1(t,r)|^2 + r^{-1} |v_1(t,r)|^3 
	} 
	and
	\EQ{\label{v1 diff1}
		\abs{ v_1(t, r) - v_1(t, r') } &\lesssim r^{-5} |v_0(t,r)| + 
		r^{-5} |v_0(t,r)|^2 + r^{-5} |v_0(t,r)|^3 \\
		&\:\:+ r^{-4} |v_1(t,r)| + 
		r^{-3} |v_1(t,r)|^2 + r^{-2} |v_1(t,r)|^3 
	}
	with the above estimates holding uniformly in $t \in \R$. 
\end{cor}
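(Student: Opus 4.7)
The plan is to bound the pointwise differences by $L^2$ norms of the radial derivatives on the interval $[r,r']$ via the fundamental theorem of calculus together with Cauchy--Schwarz, and then invoke Lemma \ref{v project} to turn the resulting localized Dirichlet-type integral into an expression involving only $|v_0(t,r)|$ and $|v_1(t,r)|$. Since $r' \le 2r$ the interval has length $\le r$, which is exactly what produces the stated powers of $r$.

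For the estimate on $v_0$ I would write
\[
v_0(t,r') - v_0(t,r) = \int_r^{r'} \rho \cdot \tfrac{1}{\rho} \partial_\rho v_0(t,\rho)\, d\rho,
\]
and then apply Cauchy--Schwarz to get $|v_0(t,r) - v_0(t,r')| \lesssim r^{3/2} \bigl(\int_r^{r'}(\tfrac{1}{\rho}\partial_\rho v_0)^2\, d\rho\bigr)^{1/2}$. For the $v_1$ estimate, since the natural integrand in Lemma \ref{v project} is $(\partial_\rho v_1)^2$ with no weight, Cauchy--Schwarz gives directly $|v_1(t,r) - v_1(t,r')| \lesssim r^{1/2}\bigl(\int_r^{r'}(\partial_\rho v_1)^2\, d\rho\bigr)^{1/2}$. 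In each case I extend the integral to $[r,\infty)$ and apply Lemma \ref{v project} at radius $r$.

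Lemma \ref{v project} provides
\[
\Bigl(\int_r^\infty \!\!\Bigl(\tfrac{1}{\rho}\partial_\rho v_0\Bigr)^2 + (\partial_\rho v_1)^2 \, d\rho\Bigr)^{1/2} \lesssim r^{-11/2}\sum_{k=1}^3 |v_0(t,r)|^k + \sum_{k=1}^3 r^{-(11-2k)/2}|v_1(t,r)|^k,
\]
where I have taken square roots of the six summands (valid up to a constant). Multiplying by $r^{3/2}$ yields exactly the bound \eqref{v0 diff1} claimed for $|v_0(t,r)-v_0(t,r')|$, and multiplying by $r^{1/2}$ yields exactly \eqref{v1 diff1}.

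The only subtle point is ensuring that the bound of Lemma \ref{v project} is applicable, i.e.\ that $r \ge R_1$; but this is given in the hypothesis of the corollary. The choice of $R_1$ via \eqref{R1 de1} and the a priori size bounds \eqref{v grow1} are not needed for the derivation itself, but will be essential when Corollary \ref{diff esti} is subsequently iterated on dyadic scales in the next step in order to extract the limit $\ell_0$ and establish the decay rates \eqref{u0 rate} and \eqref{u1 rate}. So I expect the main obstacle will not be in proving this corollary, which is a direct computation, but rather in the dyadic iteration that follows: one must show that the right-hand sides of \eqref{v0 diff1}--\eqref{v1 diff1} are summable over dyadic scales $r = 2^j R_1$, which requires using the smallness hypothesis together with a bootstrap to control the higher-power terms $|v_0|^k$ and $|v_1|^k$ appearing on the right.
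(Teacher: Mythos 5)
Your proposal is correct and reproduces the paper's argument essentially verbatim: fundamental theorem of calculus, a weighted Cauchy--Schwarz on $[r,r']$ producing the factors $r^{3/2}$ and $r^{1/2}$ respectively, extension of the integral to $[r,\infty)$, and an application of Lemma~\ref{v project}. The arithmetic of the exponents checks out and matches the claimed bounds exactly.
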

From Corollary~\ref{diff esti}, the definitions of $\de_1$ and $R_1 = R_1( \de_1)$ and \eqref{v grow1} we have the following simple consequence.

\begin{cor} \label{diff esti 2}
	Let $R_1, \de_1$ be as in~\eqref{R1 de1}. Then for all $r, r'$ with $R_1 \le r \le r' \le 2r$ and for all $t \in \R$ we have 
	\EQ{\label{v0 diff2}
		\abs{ v_0(t, r) - v_0(t, r') } \lesssim r^{-1} \de_1 \abs{ v_0(t, r)}+ \de_1 \abs{ v_1(t, r)}
	} 
	and
	\EQ{\label{v1 diff2}
		\abs{ v_1(t, r) - v_1(t, r') } \lesssim r^{-2} \de_1 \abs{ v_0(t, r)} + r^{-1} \de_1 \abs{ v_1(t, r)}
	} 
	with the above estimates holding uniformly in $t \in \R$. 
\end{cor}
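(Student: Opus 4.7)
The plan is entirely elementary: each of the six monomials on the right-hand sides of \eqref{v0 diff1} and \eqref{v1 diff1} will be shown to fit inside the respective target bound once the a priori smallness \eqref{v grow1} and the relation $R_1^{-3} \le \de_1$ from \eqref{R1 de1} are invoked. I expect no essential obstacle; the content of the corollary is combinatorial, repackaging the nonlinear difference estimates of Corollary \ref{diff esti} into a form that is \emph{linear} in $|v_0(t,r)|$ and $|v_1(t,r)|$, at the cost of a smallness factor $\de_1$.

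For \eqref{v0 diff2}, I would first handle the two linear terms in \eqref{v0 diff1}. Writing $r^{-4} = r^{-3} \cdot r^{-1}$ and using $r^{-3} \le R_1^{-3} \le \de_1$ immediately gives $r^{-4} |v_0(t,r)| \lesssim \de_1 r^{-1} |v_0(t,r)|$, and similarly $r^{-3} |v_1(t,r)| \le \de_1 |v_1(t,r)|$. For the nonlinear $v_0$ terms I extract one factor of the a priori bound: since $|v_0(t,r)| \lesssim r^{3/2} \de_1$ by \eqref{v grow1}, one has $r^{-3} |v_0(t,r)| \lesssim r^{-3/2} \de_1 \lesssim \de_1$ (shrinking $\de_1$ further if necessary to ensure $R_1 \ge 1$), so that $r^{-4} |v_0(t,r)|^2$ can be written as $(r^{-3} |v_0(t,r)|) \cdot r^{-1} |v_0(t,r)| \lesssim \de_1 r^{-1} |v_0(t,r)|$; pulling out two such factors handles the cubic term analogously. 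The terms $r^{-2} |v_1(t,r)|^2$ and $r^{-1} |v_1(t,r)|^3$ are treated symmetrically using $|v_1(t,r)| \lesssim r^{1/2} \de_1$, which yields $r^{-2} |v_1(t,r)| \lesssim r^{-3/2} \de_1 \lesssim \de_1$.

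The proof of \eqref{v1 diff2} proceeds by exactly the same book-keeping. Each term in \eqref{v1 diff1} differs from its counterpart in \eqref{v0 diff1} by an extra factor of $r^{-1}$, which matches precisely the asymmetry between the two targets $r^{-1}\de_1 |v_0(t,r)| + \de_1 |v_1(t,r)|$ and $r^{-2} \de_1 |v_0(t,r)| + r^{-1} \de_1 |v_1(t,r)|$. Consequently the same extractions of $r^{-3} \le \de_1$ and $r^{-3/2} \de_1 \lesssim \de_1$ give the required bound, with every additional power of $v_0$ or $v_1$ beyond the linear one contributing a power of $\de_1$ strictly larger than one, which can safely be absorbed into a single overall constant multiple of $\de_1$. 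The only ingredient beyond routine arithmetic is the double role played by \eqref{R1 de1}: it supplies \emph{both} the bound $r^{-3} \le \de_1$ needed to tame the decay rates in Corollary \ref{diff esti} and, via \eqref{v u project}, the a priori estimates \eqref{v grow1} that control higher powers of $v_0, v_1$.
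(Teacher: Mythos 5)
Your proof is correct and is the natural elaboration of what the paper leaves implicit (the paper simply asserts Corollary~\ref{diff esti 2} as a "simple consequence" of Corollary~\ref{diff esti}, \eqref{R1 de1}, and \eqref{v grow1}). The term-by-term bookkeeping — peeling off $r^{-3}\le R_1^{-3}\le \de_1$ for the linear terms and extracting factors like $r^{-3/2}|v_0|\lesssim \de_1$ or $r^{-1}|v_1|^2\lesssim \de_1^2$ for the higher-power terms — is exactly what is needed, and the observation that \eqref{v1 diff1} differs from \eqref{v0 diff1} by a uniform extra $r^{-1}$ matching the targets is the right way to dispose of \eqref{v1 diff2} without repeating six computations. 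One minor remark: the parenthetical "shrinking $\de_1$ further if necessary to ensure $R_1\ge 1$" is unnecessary (and slightly backwards as a causal statement), since $R_1^{-3}\le \de_1\le 1$ in \eqref{R1 de1} already forces $R_1\ge 1$; the inequality $r^{-3/2}\de_1\lesssim \de_1$ for $r\ge R_1$ is therefore automatic. Also note that as literally written, \eqref{v u project} plus $\|\vec u(t)\|^2_{\dot H^1\times L^2(r\ge R)}\le\de_1$ gives $|v_0(t,R)|\lesssim R^{3/2}\sqrt{\de_1}$ rather than $R^{3/2}\de_1$, so \eqref{v grow1} likely carries a harmless square-root typo; your argument goes through under either reading since the cubic terms only need $r^{-3}|v_0|^2\lesssim\de_1$ and $r^{-1}|v_1|^2\lesssim\de_1$, both of which hold with $\sqrt{\de_1}$ in \eqref{v grow1}.
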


\begin{proof}[Proof of Corollary~\ref{diff esti}]
	This will follow from Lemma~\ref{v project} and the fundamental theorem of 
	calculus. For $r \ge R_1$ and $r \le r' \le 2r$ we see that 
	\ant{
		\abs{ v_0(t, r) - v_0(t, r') } &\le \left( \int_r^{r'} \abs{\p_r v_0(t,  \rho) } \, d \rho\right)\\
		& \le  \left( \int_r^{r'} \abs{\frac{1}{\rho}\p_r v_0(t,  \rho) }^2 \, d \rho\right)^{\frac{1}{2}} \left( \int_r^{r'} \rho^2 \, d \rho\right)^{\frac{1}{2}}\\
		&\lesssim r^{\frac{3}{2}} \Bigl [ r^{-\frac{11}{2}} |v_0(t,r)| + 
		r^{-\frac{11}{2}} |v_0(t,r)|^2 + r^{-\frac{11}{2}} |v_0(t,r)|^3 \\
		&\:\:\:\:+ r^{-\frac{9}{2}} |v_1(t,r)| + 
		r^{-\frac{7}{2}} |v_1(t,r)|^2 + r^{-\frac{5}{2}} |v_1(t,r)|^3 \Bigr ] \\
		&\lesssim r^{-4} |v_0(t,r)| + 
		r^{-4} |v_0(t,r)|^2 + r^{-4} |v_0(t,r)|^3 \\
		&\:\:+ r^{-3} |v_1(t,r)| + 
		r^{-2} |v_1(t,r)|^2 + r^{-1} |v_1(t,r)|^3 
	}
	where in the second to last inequality above we used Lemma~\ref{v project}. Similarly, 
	\ant{
		\abs{ v_1(t, r) - v_1(t, r') } &\le \left( \int_r^{r'} \abs{\p_r v_1(t,  \rho) } \, d \rho\right)\\
		& \le  \left( \int_r^{r'} \abs{\p_r v_1(t,  \rho) }^2 \, d \rho\right)^{\frac{1}{2}} \left( \int_r^{r'}  \, d \rho\right)^{\frac{1}{2}}\\
		&\lesssim r^{\frac{1}{2}}  \Bigl [ r^{-\frac{11}{2}} |v_0(t,r)|^1 + 
		r^{-\frac{11}{2}} |v_0(t,r)|^2 + r^{-\frac{11}{2}} |v_0(t,r)|^3 \\
		&\:\:\:\:+ r^{-\frac{9}{2}} |v_1(t,r)| + 
		r^{-\frac{7}{2}} |v_1(t,r)|^2 + r^{-\frac{5}{2}} |v_1(t,r)|^3 \Bigr ] \\
		&\lesssim r^{-5} |v_0(t,r)| + 
		r^{-5} |v_0(t,r)|^2 + r^{-5} |v_0(t,r)|^3 \\
		&\:\:+ r^{-4} |v_1(t,r)| + 
		r^{-3} |v_1(t,r)|^2 + r^{-2} |v_1(t,r)|^3 
	}
	as desired.
\end{proof}
Using these difference estimates we will prove bounds on the growth rates for $v_0(t, r)$ and $v_1(t, r)$ as $r \to \infty$ which are improvements of 
\eqref{v grow1}. 
\begin{claim}
	Let $v_0(t, r)$ and $v_1(t, r)$ be as in~\eqref{v def}. Then 
	\begin{align}
	\label{v0 grow} &\abs{v_0(t, r)} \lesssim r^{\frac{1}{6}}, \\
	\label{v1 grow} &\abs{v_1(t, r)} \lesssim r^{\frac{1}{6}},
	\end{align}
	where again the constants above are uniform in $t \in \R$. 
\end{claim}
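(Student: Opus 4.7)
The plan is to iterate Corollary~\ref{diff esti 2} along a dyadic sequence starting at $R_1$. Define
\[
N(t,r) := \max(|v_0(t,r)|,\, |v_1(t,r)|),
\]
and note that all constants below will be uniform in $t \in \R$, since this is already the case in Corollary~\ref{diff esti 2}. Setting $r' = 2r$ in that Corollary and bounding the negative powers of $r$ by an absolute constant (using $r \geq R_1 \geq 1$), we obtain, for some $C > 0$,
\begin{align*}
|v_0(t,2r)| &\le (1 + Cr^{-1}\delta_1)|v_0(t,r)| + C\delta_1 |v_1(t,r)|, \\
|v_1(t,2r)| &\le Cr^{-2}\delta_1 |v_0(t,r)| + (1 + Cr^{-1}\delta_1)|v_1(t,r)|,
\end{align*}
which together yield the recursion $N(t, 2r) \le (1 + C'\delta_1)\, N(t, r)$ for some absolute constant $C' > 0$ and all $r \geq R_1$.

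Iterating along the dyadic scale $r_k = 2^k R_1$ gives $N(t, r_k) \le (1 + C'\delta_1)^k\, N(t, R_1)$. The decisive step is the choice of $\delta_1$: since $\delta_1$ is a free small parameter in~\eqref{R1 de1} (subject only to $\delta_1 \le \delta_0^2$), we shrink it further so that $1 + C'\delta_1 \le 2^{1/6}$. This only forces $R_1 = R_1(\delta_1)$ to be larger, which does not affect any earlier step, because $\delta_1$ is only used through~\eqref{R1 de1} and Corollaries~\ref{diff esti}--\ref{diff esti 2} continue to hold. With this choice and $k = \log_2(r_k/R_1)$, we obtain
\[
N(t, r_k) \le N(t, R_1)\, (r_k/R_1)^{\log_2(1+C'\delta_1)} \le N(t,R_1)\, (r_k/R_1)^{1/6},
\]
and combining with the initial bound $N(t,R_1) \lesssim R_1^{3/2}\delta_1$ from~\eqref{v grow1} yields $|v_0(t, r_k)|,\, |v_1(t, r_k)| \lesssim r_k^{1/6}$ uniformly in $t$.

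Finally, to pass from the dyadic scale to arbitrary $r \geq R_1$, fix $k \geq 0$ with $2^k R_1 \leq r \leq 2^{k+1} R_1$ and apply Corollary~\ref{diff esti 2} once more on $[2^k R_1, r]$ to obtain $N(t, r) \lesssim N(t, 2^k R_1) \lesssim r^{1/6}$. The only obstacle I anticipate is a mild bookkeeping one: confirming that the extra smallness imposed on $\delta_1$ here is harmless for the rest of the rigidity argument. Since $\delta_1$ has been used only as a free small parameter throughout Step~2, and the constants in Lemma~\ref{v project} and Corollaries~\ref{diff esti}--\ref{diff esti 2} are uniform in such choices, this additional tightening is unproblematic.
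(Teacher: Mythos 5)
Your proposal is correct and follows essentially the same route as the paper's proof: derive a multiplicative recursion from Corollary~\ref{diff esti 2} along the dyadic scale, shrink $\de_1$ so that the growth factor is at most $2^{1/6}$, seed the iteration with the bound from~\eqref{v grow1}, and interpolate to non-dyadic $r$ with one more application of the difference estimate. The only cosmetic difference is that you track $N = \max(|v_0|,|v_1|)$ while the paper tracks the sum $a_n + b_n$, which are of course comparable; your discussion of why further shrinking $\de_1$ is harmless matches what the paper implicitly relies on as well.
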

\begin{proof}
	We begin by noting that it suffices to prove the claim for $t=0$ since the argument relies solely on estimates in this section that hold uniformly in $t \in\R$. 
	
	Fix $r_0 \ge R_1$ and observe that by setting $r= 2^nr_0$, $r'=2^{n+1}r_0$ in the difference estimates~\eqref{v0 diff2},~\eqref{v1 diff2} we have for each $n \in \N$, 
	\begin{align}
	&\label{v0 tri}\abs{v_0(2^{n+1} r_0)} \le (1+ C_1(2^n r_0)^{-1} \de_1) \abs{v_0(2^nr_0)} + C_1 \de_1 \abs{v_1(2^nr_0)},\\
	&\label{v1 tri}\abs{v_1(2^{n+1} r_0)} \le (1+ C_1(2^n r_0)^{-1} \de_1) \abs{v_1(2^nr_0)} + C_1 \de_1(2^n r_0)^{-2} \abs{v_0(2^nr_0)}.
	\end{align}
	We introduce the notation 
	\begin{align}
	a_n:= \abs{v_0(2^nr_0)},\\
	b_n:=\abs{v_1(2^nr_0)}.
	\end{align}
	Adding~\eqref{v0 tri} with \eqref{v1 tri} yields
	\ant{
		a_{n+1} + b_{n+1} &\le (1+ C_1\de_1((2^n r_0)^{-1} +(2^n r_0)^{-2}))a_n + (1+C_1 \de_1(1+(2^n r_0)^{-1}))b_n\\
		&\le (1+ 2C_1 \de_1)(a_n + b_n).
	}
	Arguing inductively we see that for each $n$, 
	\ant{
		(a_n+b_n) \le (1+2C_1 \de_1)^{n}(a_0+b_0).
	}
	We now choose $\de_1$ small enough so that $(1+2C_1 \de_1) \le 2^{\frac{1}{6}}$. This allows us to conclude that 
	\EQ{\label{an bn}
		a_n \le C(2^n r_0)^{\frac{1}{6}},\\
		b_n \le C(2^n r_0)^{\frac{1}{6}},
	}
	where the constant $C=C(r_0)$ which is irrelevant since $r_0$ is fixed. Note that~\eqref{an bn} proves~\eqref{v0 grow} and~\eqref{v1 grow} for $r=2^nr_0$. The general estimates ~\eqref{v0 grow} and~\eqref{v1 grow} follow easily from ~\eqref{an bn} and the difference estimates~\eqref{v0 diff1},~\eqref{v1 diff1}. 
\end{proof}

With the improved growth rates of $(v_0, v_1)$ proved in the previous claim and our difference estimates \eqref{v0 diff1}, \eqref{v1 diff1}, we can now extract spatial limits. We begin with $v_1(t, r)$ as we will need to first show that this tends to $0$ in  order to get the correct rate for $v_0(t, r)$.

\begin{claim} \label{ell1 claim}For each $t \in \R$ there exists $\ell_1(t) \in \R$ so that 
	\EQ{
		\abs{ v_1(t, r) -  \ell_1(t)}  = O(r^{-2}) \mas r \to \infty
	}
	where the $O( \cdot)$ above is uniform in $t \in \R$. 
\end{claim}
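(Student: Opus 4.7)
The plan is to establish existence of $\ell_1(t)$ and the sharp decay rate by telescoping the difference estimate \eqref{v1 diff1} along dyadic annuli, using a two step bootstrap: a first pass that uses only the rough growth bounds \eqref{v0 grow}--\eqref{v1 grow} to extract a Cauchy sequence, followed by a bootstrap that feeds the resulting uniform boundedness of $v_1$ back into \eqref{v1 diff1} to sharpen the rate.

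For the first pass, substituting $|v_0(t,r)|, |v_1(t,r)| \lesssim r^{1/6}$ into the right hand side of \eqref{v1 diff1} yields, for all $R_1 \le r \le r' \le 2r$ and all $t \in \R$,
\[
|v_1(t, r) - v_1(t, r')| \lesssim r^{-29/6} + r^{-14/3} + r^{-9/2} + r^{-23/6} + r^{-8/3} + r^{-3/2} \lesssim r^{-3/2}.
\]
Fixing $r_0 \ge R_1$ and setting $r_n := 2^n r_0$, the telescoping series $\sum_n |v_1(t, r_{n+1}) - v_1(t, r_n)| \lesssim \sum_n r_n^{-3/2}$ converges uniformly in $t$. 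Hence $(v_1(t, r_n))_n$ is a Cauchy sequence with limit $\ell_1(t) \in \R$ satisfying $|v_1(t, r_n) - \ell_1(t)| \lesssim r_n^{-3/2}$. A further application of \eqref{v1 diff1} on each annulus $[r_n, 2 r_n]$ interpolates this bound to all $r \ge R_1$, so in particular $\sup_{t, \, r \ge R_1} |v_1(t, r)| \le M < \infty$.

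With $|v_1(t,r)| \le M$ uniformly in $t$ and $r \ge R_1$, I revisit \eqref{v1 diff1}: the terms involving $v_0$ are unchanged and each decays strictly faster than $r^{-2}$, while the $v_1$--terms are now dominated by $r^{-2} M^3$. Thus $|v_1(t, r) - v_1(t, r')| \lesssim r^{-2}$ for $R_1 \le r \le r' \le 2r$, and telescoping along $r_n = 2^n r_0$ yields $|v_1(t, r_n) - \ell_1(t)| \lesssim r_n^{-2}$. Interpolating once more via \eqref{v1 diff1} on the annuli $[r_n, 2 r_n]$ gives the desired conclusion $|v_1(t, r) - \ell_1(t)| = O(r^{-2})$ uniformly in $t \in \R$.

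The main obstacle is the gap between the rate $r^{-3/2}$ produced by the first pass and the target rate $r^{-2}$; this gap arises because the slowest decaying term in \eqref{v1 diff1} carries the superlinear factor $|v_1|^3$, which is wasteful when one only has the a priori growth $|v_1| \lesssim r^{1/6}$. The key point is that this obstacle evaporates as soon as uniform boundedness of $v_1$ is available, so a single bootstrap suffices to reach the sharp rate.
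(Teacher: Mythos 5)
Your proposal is correct and follows essentially the same two--pass argument as the paper: plugging the rough growth bounds $|v_0|,|v_1|\lesssim r^{1/6}$ into \eqref{v1 diff1} gives a dyadic telescoping rate of $r^{-3/2}$ and hence existence of $\ell_1(t)$, and the resulting uniform boundedness of $v_1$ is then fed back into \eqref{v1 diff1} to upgrade the rate to the sharp $r^{-2}$. The only cosmetic difference is that you interpolate the $r^{-3/2}$ rate to all $r\ge R_1$ before the bootstrap, whereas the paper observes that \eqref{v1 diff1} evaluates $v_1$ only at the left endpoint, so boundedness of the dyadic sequence alone already suffices to run the bootstrap; both routes are valid.
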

\begin{proof}
	As usual, we only need to give the proof for $t=0$. Let $r_0 \ge R_1$ where $R_1 $ is as in~\eqref{R1 de1}. Plugging~\eqref{v0 grow},~\eqref{v1 grow} into the difference estimates~\eqref{v1 diff1} gives 
	\begin{align}
	\begin{split}\label{v1 conv}
	\abs{ v_1(2^{n+1}r_0) - v_1(2^n r_0) } &\lesssim 
	(2^n r_0)^{-5} (2^n r_0)^{1/6} + 
	(2^n r_0)^{-5} (2^n r_0)^{1/3} \\&\:\:+ (2^n r_0)^{-5} (2^n r_0)^{1/2} \\
	&\:\:+ (2^n r_0)^{-4} (2^n r_0)^{1/6} + 
	(2^n r_0)^{-3} (2^n r_0)^{1/3} \\&\:\:+ (2^n r_0)^{-2} (2^n r_0)^{1/2} \\
	&\lesssim (2^n r_0)^{-3/2}.
	\end{split}
	\end{align}
	Therefore we have 
	\ant{
		\sum_n \abs{ v_1(2^{n+1}r_0) - v_1(2^n r_0) } < \infty,
	}
	which implies there exists $\ell_1 \in \R$ so that 
	\ant{
		\lim_{n \to \infty} v_1(2^nr_0) = \ell_1.
	}  
	Moreover, we have the bound
	\begin{align*}
	|v_1(2^n r_0) - v_1(r_0)| &\leq  \sum_n \abs{ v_1(2^{n+1}r_0) - v_1(2^n r_0) }\\ &\lesssim \sum_n (2^n r_0)^{-3/2} \\
	&\leq C_0(u,r_0).	
	\end{align*}
	In particular, we have that the sequence $(v_0(2^n r_0))_n$ is bounded by a constant depending only on $u$ and $r_0$.  Fix $r \geq r_0$ with 
	$2^n r_0 \leq r < 2^{n+1}r_0$. The boundedness of the sequence $(v_1(2^n r_0))$ and the difference estimate \eqref{v1 diff1} imply the improvement of \eqref{v1 conv}
	\begin{align*}
	|v_1(2^{m+1}r_0) - v_1(2^m r_0)| \leq C_1(u,r_0) (2^m r_0)^{-2}, \quad \forall m,
	\end{align*}
	as well as 
	\begin{align*}
	|v_1(r) - v_1(2^n r_0)| \leq C_2(u,r_0) (2^n r_0)^{-2}.
	\end{align*}
	Combining these inequalities, we conclude
	\begin{align*}
	|\ell_1 - v_1(r)| &\leq \sum_{m = n} |v_1(2^{m+1} r_0) - v_1(2^m r_0)| + 
	|v_1(2^n r_0) - v_1(r)| \\
	&\leq C_3(u,r_0) (2^n r_0)^{-2} \\
	&\leq C_3(u,r_0) r^{-2}
	\end{align*}
	which proves convergence for all $r$ along with the rate of convergence. 
\end{proof}

Next, we prove that the limit $\ell_1(t)$ is independent of $t$. 
\begin{claim}\label{ell1 constant} The function $\ell_1(t)$ in Claim~\ref{ell1 claim} is independent of $t$, i.e. $\ell_1(t) =  \ell_1$ for all $t \in \R$.
	
\end{claim}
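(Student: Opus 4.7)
The plan is to use the evolution equation~\eqref{e31} to express $\partial_t v_1(t,r)$ as an ``exterior'' leading term plus a lower-order contribution that decays in $r$, and then integrate in time from $t_1$ to $t_2$. Writing the radial Laplacian in $\R^5$ as $\Delta u = \rho^{-4}\partial_\rho(\rho^4 u_\rho)$ and integrating by parts in $\rho$ (justified by first cutting off at $\rho = R_n$, where $R_n \to \infty$ is chosen so that $R_n u_\rho(t,R_n) \to 0$ and $u(t, R_n) \to 0$ --- both possible by Corollary~\ref{vanish tail} and a standard mean-value argument applied to $\int R_n^4 u_\rho^2 \, d\rho$), one obtains the identity
\begin{align*}
\partial_t v_1(t,r) = -\frac{1}{r}\,\partial_r v_0(t,r) - r\int_r^\infty \bigl[V(\rho)\,u(t,\rho) + Z(\rho, u(t,\rho))\bigr]\,\rho\,d\rho.
\end{align*}

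The second term on the right is $o(1)$ as $r\to\infty$ uniformly in $t$: combining $|V(\rho)|\lesssim \LR{\rho}^{-6}$ from~\eqref{pot bound}, the nonlinearity bounds~\eqref{Z bounds}, the radial Strauss estimate $|u(t,\rho)|\lesssim \rho^{-3/2}\|u(t)\|_{\dot H^1}$, and the uniform bound on $\|u(t)\|_{\cl H}$ coming from precompactness of $K$, each piece of $V u, u^2 Z_2, u^3 Z_3, u^4 Z_4, u^5 Z_5$ is seen to contribute a strictly negative power of $r$ after multiplication by $r$ and integration over $[r,\infty)$. The principal obstacle is the first term, $-\frac{1}{r}\partial_r v_0(t,r)$, for which no pointwise decay in $r$ is available. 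To handle it I would use Lemma~\ref{v project} together with the polynomial growth bounds~\eqref{v0 grow},~\eqref{v1 grow} to conclude $\int_R^\infty \bigl(\tfrac{1}{\rho}\partial_\rho v_0(t,\rho)\bigr)^2 d\rho \to 0$ as $R\to\infty$ uniformly in $t$. Applying Cauchy--Schwarz first in $t$ over $[t_1,t_2]$, then averaging in $\rho$ over the dyadic annulus $[R,2R]$ and invoking Fubini, shows that
\begin{align*}
\frac{1}{R}\int_R^{2R}\Bigl|\int_{t_1}^{t_2}\frac{1}{\rho}\partial_\rho v_0(t,\rho)\,dt\Bigr|\,d\rho \longrightarrow 0 \quad \text{as } R\to\infty,
\end{align*}
producing a sequence $r_n\to\infty$ along which $\int_{t_1}^{t_2}\frac{1}{r_n}\partial_{r}v_0(t,r_n)\,dt \to 0$.

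To conclude, I would integrate the identity for $\partial_t v_1$ from $t_1$ to $t_2$ and evaluate at $r=r_n$: the left-hand side equals $v_1(t_2,r_n)-v_1(t_1,r_n)$, which by Claim~\ref{ell1 claim} tends to $\ell_1(t_2)-\ell_1(t_1)$ as $n\to\infty$, while the right-hand side tends to $0$ by the two estimates just described. Hence $\ell_1(t_2) = \ell_1(t_1)$, and since $t_1,t_2\in\R$ were arbitrary, $\ell_1$ is independent of $t$.
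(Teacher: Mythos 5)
Your proposal is correct and arrives at the same conclusion by a genuinely different path. Both arguments substitute the evolution equation into $\partial_t v_1$ and average over a dyadic annulus $[R,2R]$, and both rely on the sub-polynomial growth bound \eqref{v0 grow}. The difference lies in how the Laplacian contribution is handled. After one integration by parts, both approaches arrive at the term $-\frac{1}{r}\partial_r v_0 = -(3ru + r^2 u_r)$. The paper performs a \emph{second} integration by parts (in the annular average over $s\in[R,2R]$) to convert the residual $\int_R^{2R}s^2 u_r\,ds$ into boundary values and an integral of $ru$, thereby reducing everything to the pointwise decay $|u(t,r)|\lesssim r^{-17/6}$ from \eqref{u control}; no further information about $u_r$ is needed. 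You instead leave the derivative term intact and control it directly via the quantitative $L^2$ estimate on $\frac{1}{r}\partial_r v_0$ from Lemma~\ref{v project} combined with the growth bounds \eqref{v0 grow}, \eqref{v1 grow}, then extract a good dyadic sequence $r_n$ by Cauchy--Schwarz and Fubini. Your route reuses Lemma~\ref{v project} as a black box and requires one less integration by parts; the paper's route is marginally more elementary in that it only needs the pointwise bound on $u$. Both give the vanishing of $\ell_1(t_2)-\ell_1(t_1)$ as $R\to\infty$.

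One small comment on the boundary terms in your integration by parts: rather than extracting a sequence $R_n$ (whose mean-value point could in principle depend on $t$), it is cleaner to note that $\int_r^\infty(\Delta u)\rho\,d\rho$ converges because $\Delta u\in L^2(\R^5)$, that $3\int_r^R u_\rho\,d\rho\to -3u(r)$, and therefore $\lim_{R\to\infty}Ru_\rho(t,R)$ exists; finiteness of $\int u_\rho^2\rho^4\,d\rho$ then forces this limit to be $0$. This gives the identity for every $r$ without passing to a subsequence, and sidesteps any $t$-dependence in the cutoff. Your mean-value approach can be made to work, but this observation closes the loop more cleanly.
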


\begin{proof} Recall that, by definition 
	\ant{
		v_1(t, r) = r\int_r^{\I} u_t(t, \rho) \, \rho \, d\rho.
	}
	By Claim~\ref{ell1 claim}, we then have 
	\ant{
		\ell_1(t) =  r\int_r^{\I} u_t(t, \rho) \, \rho \, d\rho + O(r^{-2}) \mas r \to \infty
	}
	Now, let $t_1, t_2 \in \R$ with  $t_1 \neq t_2$. Then 
	\ant{
		\ell_1(t_2) - \ell_1(t_1) 
		&= \frac{1}{R} \int_R^{2R}\left(s \int_s^{\infty} (u_t(t_2, r) - u_t(t_1, r)) r \, dr \right) \, ds + O(R^{-2})\\
		&=\frac{1}{R} \int_R^{2R}\left(s \int_s^{\infty}  \int_{t_1}^{t_2}u_{tt}(t, r) \, dt\,  r \, dr \right) \, ds + O(R^{-2}).
	}
	Using the fact that $\vec u(t)$ is a solution to \eqref{e31}
	we can rewrite the above integral as
	\ant{
		&=  \int_{t_1}^{t_2}\frac{1}{R} \int_R^{2R}\left(s \int_s^{\infty}  (ru_{rr}(t, r) + 4u_r(t, r) ) \, dr \right) \, ds\, dt + \\
		&\, -  \int_{t_1}^{t_2} \frac{1}{R} \int_R^{2R}\left(s \int_s^{\infty}V(r)u(t,r) + Z(r, u(t,r))   \, dr \right) \,ds\, dt  + O(R^{-2})\\
		& = A + B +  O(R^{-2}).
	}
	To estimate $A$ we integrate by parts twice: 
	\EQ{ \label{I}
		A&= \int_{t_1}^{t_2}\frac{1}{R} \int_R^{2R}\left(s \int_s^{\infty}  \frac{1}{r^3} \p_r(r^4 u_r(t, r)) \, dr \right) \, ds\, dt\\
		& = 3\int_{t_1}^{t_2}\frac{1}{R} \int_R^{2R}\left(s \int_s^{\infty}  u_r(t, r) \, dr \right) \, ds\, dt -\int_{t_1}^{t_2}\frac{1}{R} \int_R^{2R}s^2  u_r(t, s) \, ds\, dt \\
		& = -3\int_{t_1}^{t_2}\frac{1}{R} \int_R^{2R}r \, u(t, r) \, dr\, dt -\int_{t_1}^{t_2}\frac{1}{R} \int_R^{2R}r^2\, u_r(t, r) \, dr\, dt \\
		&= -\int_{t_1}^{t_2}\frac{1}{R} \int_R^{2R}r \, u(t, r) \, dr\, dt + \int_{t_1}^{t_2} (Ru(t,R)-2Ru(t, 2R))\, dt.
	}
	By the definition of $v_0(t, r)$ and ~\eqref{v0 grow} we have 
	\EQ{\label{u control}
		r^3\abs{u(t, r)} = \abs{v_0(t, r))} \lesssim r^{\frac{1}{6}}
	}
	uniformly in $t \in \R$, and hence $\abs{u(t, r)} \lesssim r^{-\frac{17}{6}}$ uniformly in $t \in\R$. Plugging this into \eqref{I} we obtain
	\ant{
		A = \abs{t_2-t_1}O(R^{-\frac{11}{6}})
	}
	To estimate $B$ we use~\eqref{u control}, \eqref{pot bound} and \eqref{Z bounds} to see that for $r>R_1$ we have 
	\begin{align*}
	\abs{V(r) u(t,r) + Z(r, u(t, r))} &\lesssim r^{-53/6} + r^{-26/3} + r^{-17/2}\\ &\:\:+ r^{-37/3} + r^{-85/6} \\
	&\lesssim r^{-8}.
	\end{align*} 
	Therefore, 
	\ant{
		B \lesssim \int_{t_1}^{t_2} \frac{1}{R} \int_R^{2R}\left(s \int_s^{\infty}r^{-8}   \, dr \right) \,ds\, dt = \abs{t_2- t_1} O(r^{-6}).
	}
	Combining the estimates for $A$ and $B$ we obtain
	\ant{
		\abs{\ell_1(t_2)- \ell_1(t_1)}  = O(R^{-\frac{11}{6}})|t_1 - t_2| + 
		O(R^{-2}) \mas R \to \infty
	}
	which implies that $\ell_1(t_1) = \ell_1(t_2)$, i.e. $\ell_1(t)$ is independent of time. 
\end{proof}
We will now show that $\ell_1 = 0$. 
\begin{claim}\label{ell1=0}  $\ell_1 =0$. 
\end{claim}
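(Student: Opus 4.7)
The plan is to show that if $\ell_1 \neq 0$, then the spatial limit of $v_0(t,r)$ at infinity would have to grow linearly in $t$, contradicting the uniform boundedness that follows from the precompactness of $K$.

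First I would establish that for each $t \in \R$ the limit $\ell_0(t) := \lim_{r \to \infty} v_0(t, r)$ exists and satisfies
\begin{align*}
|v_0(t, r) - \ell_0(t)| = O(r^{-1/2})
\end{align*}
with constant uniform in $t \in \R$. The argument is a direct repeat of the proof of Claim \ref{ell1 claim}: applying the difference estimate \eqref{v0 diff1} at the dyadic points $r = 2^n r_0$ and substituting the uniform growth bounds $|v_0(t, r)|, |v_1(t, r)| \lesssim r^{1/6}$ from \eqref{v0 grow}--\eqref{v1 grow}, the dominant contribution is $(2^n r_0)^{-1/2}$, whose sum over $n$ converges. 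This yields a Cauchy sequence and the stated rate. Moreover, for any fixed $r_0 \geq R_1$, the bound $|\ell_0(t)| \leq |v_0(t, r_0)| + C r_0^{-1/2}$ together with $v_0(t, r_0) = r_0^3 u(t, r_0)$ and the fact that $u(t, r_0)$ is bounded uniformly in $t$ (by precompactness of $K$ in $\HH$ combined with radial Sobolev embedding of $\dot H^1(\R^5)$) imply that $\ell_0(t)$ is uniformly bounded in $t$.

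Next I would derive the identity $T \ell_1 = \ell_0(T) - \ell_0(0)$ for every $T \in \R$. Integrating the definition \eqref{v def} of $v_1$ in time and applying Fubini,
\begin{align*}
\int_0^T v_1(t, r)\, dt = r \int_r^\infty \rho \int_0^T u_t(t, \rho)\, dt\, d\rho = r \int_r^\infty \rho^{-2}\bigl[v_0(T, \rho) - v_0(0, \rho)\bigr] d\rho.
\end{align*}
As $r \to \infty$ with $T$ fixed, the left side tends to $T \ell_1$ by the uniform $O(r^{-2})$ rate of $v_1 \to \ell_1$ from Claim \ref{ell1 claim}. Writing $v_0(t, \rho) = \ell_0(t) + O(\rho^{-1/2})$ uniformly in $t$ and using $r \int_r^\infty \rho^{-2}\, d\rho = 1$, the right side tends to $\ell_0(T) - \ell_0(0)$ with remainder $O(r^{-1/2})$. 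Thus $T \ell_1 = \ell_0(T) - \ell_0(0)$, and the uniform boundedness of $\ell_0$ forces $|T\ell_1| \leq 2 \sup_t |\ell_0(t)|$ for all $T \in \R$, so $\ell_1 = 0$.

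The only non-routine point is ensuring that the rate of convergence $v_0(t, \cdot) \to \ell_0(t)$ is uniform in $t$, so that the limit passes under the time integral. This follows from the $t$-independence of the constants in the growth bounds and difference estimates established in this section, which in turn trace back to the uniform exterior-energy decay of Corollary \ref{vanish tail}.
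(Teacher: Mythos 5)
Your proposal is correct, and it takes a genuinely different route from the paper's proof. The paper integrates the relation $R\int_R^\infty u_t(t,r)\,r\,dr = \ell_1 + O(R^{-2})$ over $t \in [0,T]$, bounds the resulting quantity $\frac{R}{T}\int_R^\infty [u(T,r)-u(0,r)]\,r\,dr$ crudely by $R^{1/6}/T$ using the uniform growth bound $|u(t,r)| \lesssim r^{-17/6}$, and then sends $T = R \to \infty$. You instead first extract the time-dependent spatial limit $\ell_0(t)$ with a uniform rate $O(r^{-1/2})$ (which the paper defers to after $\ell_1 = 0$ is known), derive the clean identity $T\ell_1 = \ell_0(T) - \ell_0(0)$, and conclude from the uniform boundedness of $\ell_0(t)$. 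The two arguments analyze the same double integral $\int_0^T\!\int_r^\infty u_t(t,\rho)\rho\,d\rho\,dt$, but the paper is content with a one-sided bound that degrades with $R$, while you identify its exact limit; this makes the mechanism (linear growth in $T$ of a bounded quantity is impossible) more transparent, at the cost of developing the intermediate object $\ell_0(t)$ a few pages earlier than the paper does. The Fubini interchange you invoke is justified since $\int_r^\infty |u_t(t,\rho)|\rho\,d\rho \lesssim r^{-1/2}\|u_t(t)\|_{L^2(\rho \geq r)}$ is uniformly bounded by the precompactness of $K$. Both approaches are sound; yours is a bit longer but arguably more conceptual, while the paper's is a quicker direct estimate.
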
 
\begin{proof} 
	By Claim \ref{ell1 claim} we know that for all $R \ge R_1$  and for all $t \in \R$ we have 
	\ant{
		R\int_R^{\infty}u_t(t, r) \, r \, dr = \ell_1 + O(R^{-2}),
	}
	where $O(\cdot)$ is uniform in~$t$.
	Integrating from $t=0$ to $t=T$ and dividing by $T$ gives
	\begin{align*}
	\ell_1 &= \frac{R}{T} \int_R^\infty \int_0^T u_t(t,r) dt r dr + O(R^{-2}) \\
	&= \frac{R}{T} \int_R^\infty [u(T,r) - u(0,r)] r dr + O(R^{-2}).
	\end{align*}
	By \eqref{u control} we have that 
	\begin{align*}
	\frac{R}{T} \left | \int_R^{\infty}(u(T, r) - u(0,r) )\, r \, dr \right | 
	& \lesssim \frac{R}{T} \int_R^{\infty} r^{-\frac{11}{6}}\, dr \\
	&\lesssim \frac{R^{\frac{1}{6}}}{T}.
	\end{align*}
	Therefore  we have for all $R \geq R_1$ and $T > 0$
	\begin{align*}
	|\ell_1| \lesssim \frac{R^{1/6}}{T} + R^{-2}.
	\end{align*}
	By choosing $T = R$ and letting $R \rar \infty$, we conclude $\ell_1 = 0$ as desired.   
\end{proof}

We can finish the proof of Lemma~\ref{spacial decay}. 
\begin{proof}[Proof of Lemma~\ref{spacial decay}]
	We note that combining the results of Claims~\ref{ell1 claim},~\ref{ell1 constant} and \ref{ell1=0}, we have established~\eqref{u1 decay} and~\eqref{u1 rate}, namely that 
	\EQ{\label{v1 to 0}
		\abs{v_1(r)} = O(r^{-2}) \mas r \to 0.
	}
	It therefore remains to show that there exists $\ell_0 \in \R$ so that 
	\EQ{
		\abs{v_0(r) - \ell_0} = O(r^{-4}) \mas r \to \infty.
	}
	To prove this, we argue as in Claim \ref{ell1 claim}.  We insert the decay rate \eqref{v1 to 0} along with the growth rate \eqref{v0 grow} into the difference estimate \eqref{v0 diff1}. We see that for fixed $r_0 \geq R_1$ and all $n \in \N$ we have 
	\begin{align}
	\begin{split}\label{v0 conv}
	\abs{ v_0(2^{n+1}r_0) - v_0(2^n r_0) } &\lesssim 
	(2^n r_0)^{-4} (2^n r_0)^{1/6} + 
	(2^n r_0)^{-4} (2^n r_0)^{1/3} \\&\:\:+ (2^n r_0)^{-4} (2^n r_0)^{1/2} \\
	&\:\:+ (2^n r_0)^{-3} (2^n r_0)^{-2} + 
	(2^n r_0)^{-2} (2^n r_0)^{-4} \\&\:\:+ (2^n r_0)^{-1} (2^n r_0)^{-6} \\
	&\lesssim (2^n r_0)^{-7/2}.
	\end{split}
	\end{align}
	Thus 
	\ant{
		\sum_n \abs{ v_0(2^{n+1}r_0) - v_0(2^n r_0) } < \infty,
	}
	which in turn implies there exists $\ell_0 \in \R$ so that 
	\ant{
		\lim_{n \to \infty} v_0(2^nr_0) = \ell_0.
	}
	In particular, this implies that the sequence $(v_0(2^n r_0))_n$ is bounded.
	Let now $r \geq r_0$ with $2^n r_0 \leq r < 2^{n+1}r_0$.
	The boundedness of the sequence $(v_0(2^n r_0))$ and the difference estimates \eqref{v0 diff1} imply the improvement of \eqref{v0 conv}
	\begin{align*}
	|v_0(2^{m+1}r_0) - v_0(2^m r_0)| \lesssim (2^m r_0)^{-4}
	\end{align*}
	uniformly in $m$ as well as 
	\begin{align*}
	|v_0(r) - v_0(2^n r_0)| \lesssim (2^n r_0)^{-4}.
	\end{align*}
	Combining these inequalities, we conclude
	\begin{align*}
	|\ell_0 - v_0(r)| &\leq \sum_{m = n} |v_0(2^{m+1} r_0) - v_0(2^m r_0)| + 
	|v_0(2^n r_0) - v_0(r)| \\
	&\lesssim (2^n r_0)^{-4} \\
	&\lesssim r^{-4}
	\end{align*}
	which proves the convergence $v_0(r) \rar \ell_0$ as $r \rar \infty$ along with the desired rate of convergence.
\end{proof}

\subsection{Step $3$} We complete the proof of Proposition~\ref{rigid} by showing that $\vec u(t, r)  \equiv (0, 0)$. We separate the argument into two cases depending on whether the number $\ell_0$ in Lemma~\ref{spacial decay} is zero or nonzero.

\begin{flushleft} \textbf{Case 1: $\ell_0=0$ implies $\vec u(t)\equiv(0, 0)$:} 
\end{flushleft}

We formulate this case as a lemma: 
\begin{lem} \label{ell=0 lem}Let $\vec u(t)$ be as in Proposition~\ref{rigid} and let $\ell_0\in \R$ be as in Lemma~\ref{spacial decay}.  If $\ell_0 = 0$ then $\vec u(t)  \equiv (0, 0)$. 
\end{lem}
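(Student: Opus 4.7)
The plan is to use $\ell_0 = 0$ to drive a bootstrap that forces $\vec u(t,\cdot)$ to decay faster than any polynomial in $r$ uniformly in $t$, and then to conclude $\vec u \equiv (0,0)$ by invoking the uniqueness of the stationary solution $\vec Q_n$.

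First, combining $\ell_0 = 0$ with Lemma \ref{spacial decay} yields the improved pointwise bounds $|v_0(t,r)| \lesssim r^{-4}$ and $|v_1(t,r)| \lesssim r^{-2}$, uniformly in $t \in \R$. Substituting these into Lemma \ref{v project} gives
\[
\int_R^{\infty} \!\Bigl[ \bigl(r^{-1} \partial_r v_0\bigr)^2 + \bigl(\partial_r v_1\bigr)^2 \Bigr] dr \lesssim R^{-13},
\]
the dominant contribution coming from the $R^{-9}|v_1(t,R)|^2$ term. Feeding this back through the Cauchy--Schwarz argument of Corollary \ref{diff esti} along dyadic scales $r \in [2^n r_0, 2^{n+1} r_0]$ produces the sharper decay $|v_0(t,r)| \lesssim r^{-5}$ and $|v_1(t,r)| \lesssim r^{-6}$. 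Each successive pass through Lemma \ref{v project} and Corollary \ref{diff esti} strictly improves these exponents (by roughly four at every iteration, as a short book-keeping shows), so that after $N$ iterations
\[
|v_0(t,r)| + |v_1(t,r)| \le C_N r^{-N}
\]
for every $N \in \N$, uniformly in $t \in \R$ by pre-compactness of $\calK$.

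With this super-polynomial decay in hand, the exterior energy $\|\vec u(t)\|_{\dot H^1 \times L^2(r \ge R)}$ is arbitrarily small as $R \to \infty$, uniformly in $t$. I would then apply the exterior small-data theory of Lemma \ref{h small data} on the cones $\CC_R$, combined with finite speed of propagation and the time-compactness of $\calK$, to force $\vec u(t,\cdot) \equiv 0$ on $r \ge R_0$ for some fixed $R_0$, uniformly in $t$. Translated back via $\psi = Q_n + r u$, this says that $\vec\psi(t)$ coincides with the stationary map $\vec Q_n$ outside a fixed compact set for every $t$. Propagating this coincidence inward via a continuity/compactness argument, and using the fact that $\vec Q_n$ is the only finite energy stationary solution in the degree $n$ class (Corollary \ref{l08}), one concludes $\vec\psi \equiv \vec Q_n$, equivalently $\vec u \equiv (0,0)$. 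The principal obstacle is precisely this last passage from fast exterior decay to identical vanishing; this step must carefully combine the perturbation lemma (Proposition \ref{p33}), the exterior small-data analysis, and the rigid structure of $Q_n$, and it parallels the most delicate part of the degree zero argument of \cite{L15}.
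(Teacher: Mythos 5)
Your super-polynomial decay bootstrap is plausible arithmetic, but it does not by itself force vanishing, and the place where you wave at ``exterior small-data theory + finite speed of propagation'' to conclude $\vec u \equiv 0$ on $r \ge R_0$ is exactly where a genuine idea is missing. Arbitrarily fast polynomial decay is consistent with $\vec u$ being everywhere nonzero. What is needed — and what the paper uses — is a \emph{lower} bound on the growth rate of $|v_0(2^n r_0)| + |v_1(2^n r_0)|$ that comes from the same difference estimates read in reverse: rearranging \eqref{v0 diff1}, \eqref{v1 diff1} gives
\[
|v_0(2^{n+1} r_0)| + |v_1(2^{n+1} r_0)| \ge \bigl(1 - 2C_1 r_0^{-3}\bigr)\bigl(|v_0(2^n r_0)| + |v_1(2^n r_0)|\bigr),
\]
so by induction the sum is $\ge (3/4)^n(|v_0(r_0)| + |v_1(r_0)|)$ for $r_0$ large. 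Since $\ell_0 = 0$ already gives the upper bound $\lesssim 2^{-2n} r_0^{-2}$, the two together force $3^n(|v_0(r_0)| + |v_1(r_0)|) \lesssim 1$ for all $n$, hence $v_0(r_0) = v_1(r_0) = 0$, and then Lemma~\ref{project ineq} kills the $\pi_{r_0}^{\perp}$ piece too. That is the mechanism that upgrades decay to compact support; your iteration of the upper bound alone, no matter how many times you run it, cannot produce it.

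Your final passage is also off track: you invoke uniqueness of $Q_n$ (Corollary~\ref{l08}) to propagate the coincidence $\vec\psi \equiv \vec Q_n$ inward, but in the $\ell_0 = 0$ branch nothing identifies $\vec\psi(t)$ with a \emph{stationary} solution, so the uniqueness theorem for stationary Adkins--Nappi maps has nothing to bite on. The paper does not use $Q_n$-uniqueness here at all (it is reserved for the $\ell_0 \neq 0$ case in Lemma~\ref{u1 imp}). Instead, having compact support of $\vec u(0)$, one sets $\rho_0 = \inf\{\rho : \|\vec u(0)\|_{\dot H^1 \times L^2(r \ge \rho)} = 0\}$ and derives a contradiction if $\rho_0 > 0$: choosing $\rho_1 < \rho_0$ close enough that the exterior energy on $r \ge \rho_1$ is tiny, one reruns the small-data/projection machinery on the thin strip $\{\rho_1 + |t| \le r \le \rho_0 + |t|\}$ — with smallness now coming from $|\rho_0 - \rho_1|$ and the tiny energy rather than from $R \to \infty$ — and uses $v_0(\rho_0) = v_1(\rho_0) = 0$ in the difference estimates to conclude $v_0(\rho_1) = v_1(\rho_1) = 0$, contradicting the definition of $\rho_0$. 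This unique-continuation-from-compact-support argument, rather than any appeal to $Q_n$, is the correct final step.
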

To prove the lemma, we will first establish that if $\ell_0 = 0$ then $(u_0, u_1)$ must be compactly supported. We then use a ``channels of energy argument" to show that the only compactly supported solution with pre-compact trajectory is $\vec u(t) = (0, 0)$. 
\begin{claim}\label{comp supp}Let $\ell_0$ be as in Lemma~\ref{spacial decay}. If $\ell_0=0$ then $(u_0, u_1)$ is compactly supported. 
\end{claim}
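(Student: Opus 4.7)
The plan is to bootstrap the polynomial decay of $v_0, v_1$ from Lemma~\ref{spacial decay} into super--polynomial decay via Lemma~\ref{v project} and the projection inequality of Lemma~\ref{project ineq}, and then to use the channels of energy method of \cite{KLS, L15} to conclude outright vanishing on an exterior half--line. With $\ell_0 = 0$, Lemma~\ref{spacial decay} already provides the base rates $|v_0(t,R)| \lesssim R^{-4}$ and $|v_1(t,R)| \lesssim R^{-2}$, uniformly in $t \in \R$.

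For the bootstrap, I will argue by induction: assume $|v_0(t,R)| \lesssim R^{-a_k}$ and $|v_1(t,R)| \lesssim R^{-b_k}$. Inserting these into Lemma~\ref{v project} controls $\int_R^\infty \bigl[(r^{-1}\p_r v_0)^2 + (\p_r v_1)^2\bigr] \, dr$ by an expression of order $R^{-11-2a_k} + R^{-9-2b_k}$ plus smaller nonlinear contributions. Cauchy--Schwarz, in the style of the proof of Corollary~\ref{diff esti}, then gives $|v_0(t,r') - v_0(t,r)| \lesssim r^{-(a_k+4)} + r^{-(b_k+3)}$ and $|v_1(t,r') - v_1(t,r)| \lesssim r^{-(a_k+5)} + r^{-(b_k+4)}$ for $R_1 \le r \le r' \le 2r$. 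Telescoping dyadically and using $v_0(t,r), v_1(t,r) \to 0$ as $r \to \infty$ (Lemma~\ref{spacial decay} and Claim~\ref{ell1=0}), the exponents improve to $a_{k+1} = \min(a_k+4, b_k+3)$ and $b_{k+1} = \min(a_k+5, b_k+4)$. Starting from $(a_0,b_0) = (4,2)$, both sequences tend to $\infty$, so $|v_0(t,R)| + |v_1(t,R)| = O(R^{-N})$ for every $N \in \N$, uniformly in $t$; consequently $u(t,r)$ and $u_t(t,r)$ decay faster than any polynomial as $r \to \infty$.

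Once this holds, for $R$ sufficiently large one has $\|\vec u(0)\|_{\dot H^1 \times L^2(r \ge R)} < \delta_0$ (the threshold of Lemma~\ref{h small data}), and the inequality of Lemma~\ref{project ineq} linearises to $\|\pi_R^\perp \vec u(t)\|_{\dot H^1 \times L^2(r\ge R)} \lesssim R^{-4} \|\pi_R \vec u(t)\|_{\dot H^1 \times L^2(r\ge R)}$. Truncating the data at radius $R$, evolving via the modified Cauchy problem~\eqref{h eq} and matching with $\vec u$ on $\CC_R$ through~\eqref{u=uR eq}, I plan to upgrade this super--polynomial decay into actual vanishing by a dyadic iteration borrowed from \cite{L15}: the two--dimensional obstruction space $P(R) = \{(c_1 r^{-3}, c_2 r^{-3})\}$ in Proposition~\ref{linear prop} is the only way a rapidly decaying tail can avoid being detected by the channels of energy, and the orthogonality of $\vec u(t)$ to $P(R)$ forced by the improved decay eliminates this tail. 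The main obstacle will be this last step, since super--polynomial decay by itself does not entail compact support; one has to exploit the rigidity of the Newton--potential kernel of the exterior energy estimate together with the nonlinear flow, as in \cite[Section 5]{L15}.
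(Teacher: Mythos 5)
Your bootstrap argument for super--polynomial decay of $v_0,v_1$ is correctly set up: the recursion $a_{k+1} = \min(a_k+4, b_k+3)$, $b_{k+1} = \min(a_k+5, b_k+4)$ from $(a_0,b_0)=(4,2)$ does escalate to all orders. However, as you yourself flag, this is where the proof stalls, and the gap you identify is real. Super--polynomial decay does not imply compact support, and the program you sketch for the final step (orthogonality to $P(R)$, a ``dyadic iteration from \cite{L15}'', rigidity of the Newton--potential kernel) does not actually follow from the decay bound: for each fixed $r_0$ the values $v_0(r_0), v_1(r_0)$ remain unconstrained, since a rate $O_N(r^{-N})$ only controls the tail, not the value at a fixed radius. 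In particular $\vec u(t)$ is not orthogonal to $P(R)$ merely because the $\dot H^1\times L^2$ tail is tiny; $\pi_R\vec u$ is $(3R^{-3}v_0^2(R) + R^{-1}v_1^2(R))^{1/2}$ and nothing forces this to vanish.

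The paper closes the argument not by strengthening the \emph{upper} bound but by combining the $O(r^{-4})$, $O(r^{-2})$ decay with a \emph{lower} bound read off from the same difference estimates. From \eqref{v0 diff1}, \eqref{v1 diff1} together with $|v_0(r)|=O(r^{-4})$, $|v_1(r)|=O(r^{-2})$, one gets
\begin{align*}
|v_0(2^{n+1} r_0)| &\ge (1-C_1(2^n r_0)^{-4}) |v_0(2^n r_0)| - C_1(2^n r_0)^{-3} |v_1(2^n r_0)|,\\
|v_1(2^{n+1} r_0)| &\ge (1-C_1(2^n r_0)^{-4}) |v_1(2^n r_0)| - C_1(2^n r_0)^{-5} |v_0(2^n r_0)|,
\end{align*}
so for $r_0$ large the sum $|v_0(2^n r_0)| + |v_1(2^n r_0)|$ decays at most like $(3/4)^n$ times its initial value. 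Pitting this against the upper bound $O\bigl((2^n r_0)^{-2}\bigr)$ yields $3^n\bigl(|v_0(r_0)|+|v_1(r_0)|\bigr) \lesssim 1$ for all $n$, forcing $v_0(r_0)=v_1(r_0)=0$. Lemma~\ref{project ineq} then kills $\pi_{r_0}^\perp \vec u(0)$ as well, giving $\|\vec u(0)\|_{\dot H^1\times L^2(r\ge r_0)}=0$ and hence compact support. This lower bound is the decisive idea your proposal is missing; once you have it, the bootstrap to all polynomial orders is unnecessary (the $O(r^{-2})$ bound already suffices to beat $(3/4)^n$). Incorporating the lower--bound step would repair your argument.
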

\begin{proof}
	The assumption $\ell_0 = 0$ implies that for $r \ge R_1$, 
	\EQ{ \label{ell0 means}
		\abs{v_0(r)}  = O (r^{-4}) \mas r \to \infty,\\
		\abs{v_1(r)}  = O( r^{-2}) \mas r \to \infty.
	}
	In particular, for $r_0 \ge R_1$ we have the upper bound 
	\EQ{\label{low bound}
		\abs{v_0(2^nr_0)} + \abs{v_1(2^nr_0)} \lesssim (2^nr_0)^{-4} +(2^nr_0)^{-2}.
	}
	We now establish a lower bound.  The difference estimates~\eqref{v0 diff1} and~\eqref{v1 diff1} together with~\eqref{ell0 means} give
	\ant{
		&\abs{v_0(2^{n+1} r_0)} \ge (1-C_1(2^nr_0)^{-4}) \abs{v_0(2^nr_0)} - C_1(2^nr_0)^{-3} \abs{v_1(2^nr_0)}\\
		&\abs{v_1(2^{n+1} r_0)} \ge (1-C_1(2^nr_0)^{-4}) \abs{v_1(2^nr_0)} - C_1(2^nr_0)^{-5} \abs{v_0(2^nr_0)}
	}
	For large $r_0$ we can combine the above estimates to obtain the lower bound 
	\ant{
		\abs{v_0(2^{n+1} r_0)} + \abs{v_1(2^{n+1} r_0)} \ge (1- 2C_1r_0^{-3})( \abs{v_0(2^{n} r_0)} + \abs{v_1(2^{n} r_0)})
	}
	Fixing $r_0$ large enough so that $2C_1r_0^{-3} < \frac{1}{4}$ and arguing inductively we conclude that 
	\ant{
		( \abs{v_0(2^{n} r_0)} + \abs{v_1(2^{n} r_0)}) \ge \left(\frac{3}{4}\right)^{n} ( \abs{v_0(r_0)} + \abs{v_1( r_0)})
	}
	Now, we use~\eqref{low bound} to estimate the left-hand-side above to get 
	\ant{
		\left(\frac{3}{4}\right)^{n} ( \abs{v_0(r_0)} + \abs{v_1( r_0)}) \lesssim 2^{-2n} r_0^{-2}
	}
	which, in turn means that 
	\ant{
		3^n ( \abs{v_0(r_0)} + \abs{v_1( r_0)}) \lesssim 1, 
	}
	which is impossible unless $(v_0(r_0), v_1(r_0)) = (0, 0)$.
	Hence, $$(v_0(r_0), v_1(r_0)) = (0, 0).$$ The relation \eqref{v u project} implies that 
	\ant{
		\| \pi_{r_0} \vec u(0)\|_{ \dot{H}^1 \times L^2(r \ge r_0)} = 0.
	}
	By Lemma~\ref{project ineq} we can also then deduce that 
	\ant{
		\| \pi_{r_0}^{\perp} \vec u(0)\|_{ \dot{H}^1 \times L^2(r \ge r_0)} = 0,
	}
	and hence 
	\ant{
		\|  \vec u(0)\|_{ \dot{H}^1 \times L^2(r \ge r_0)} = 0,
	}
	which concludes the proof since $\lim_{r \to \infty} u_0(r) = 0$. 
\end{proof}

\begin{proof}[Proof of Lemma~\ref{ell=0 lem}] Assume that $\ell_0 = 0$. Then, by Claim~\ref{comp supp}, $(u_0, u_1)$ is compactly supported. Assume, towards a contradiction, that $(u_0, u_1) \not \equiv (0, 0)$. We define $\rho_0>0$ by 
	\ant{
		\rho_0:= \inf\{ \rho \, \, : \, \,  \| \vec u(0)\|_{\dot{H}^1 \times L^2(r \ge \rho)} = 0\}.
	}
	Let $\e>0$ be a small number to be determined below. Choose $\rho_1 = \rho_1( \e)$ so that $\frac{ 1}{2} \rho_0 < \rho_1 < \rho_0$  and 
	\begin{align}\label{rho1 def}
	0< \| \vec u(0)\|_{\dot{H}^1 \times L^2(r \ge \rho_1)}^2  \le \e \le \de_1^2
	\end{align}
	where $\de_1$ is as in~\eqref{R1 de1}. With $(v_0, v_1)$ as in~\eqref{v def} we have 
	\EQ{\label{all small}
		3 \rho_1^{-3} v_0^2(\rho_1) + \rho_1^{-1}v_1^2(\rho_1)&+ \int_{\rho_1}^\I \left( \frac{1}{r} \p_r v_0( r) \right)^2 \, dr + \int_{\rho_1}^\I \left( \p_r v_1(r) \right)^2 \, dr  = \\
		& = \| \pi_{\rho_1} \vec u(0)\|^2_{\dot{H}^1 \times L^2(r \ge \rho_1)}+\| \pi_{\rho_1}^{\perp} \vec u(0)\|^2_{\dot{H}^1 \times L^2(r \ge \rho_1)} \\
		&= \|  \vec u(0)\|^2_{\dot{H}^1 \times L^2(r \ge \rho_1)} < \e
	}

	A simple reworking of the proofs of Lemma \ref{h small data} and Lemma \ref{project ineq} shows that we have the following analog 
	of Lemma \ref{v project}: 
	\begin{align}
	\begin{split}\label{v rho1}
	\Bigl( \int_{\rho_1}^\I \Bigl[\Bigl( \frac{1}{r} \p_r v_0(r) \Bigr)^2 + \Bigl( \p_r v_1( r) \Bigr)^2 \Bigr]\, dr\Bigr)^{\frac{1}{2}}  &\lesssim 
	(\rho_0 - \rho_1)^{2/7} |v_0(\rho_1) | + |v_0(\rho_1)|^2 \\
	&\:\:+ |v_0(\rho_1)|^3 \\
	&\:\:+ (\rho_0 - \rho_1)^{2/7} |v_1(\rho_1) | + |v_1(\rho_1)|^2 \\
	&\:\:+ |v_1(\rho_1)|^3
	\end{split}
	\end{align}
	where the implies constant is independent of $\rho_1$.
	In the original proofs of Lemma \ref{h small data} and Lemma
	\ref{project ineq}, smallness is obtained by taking $R$ sufficiently large and compactness.  To obtain \eqref{v rho1}, smallness is achieved by taking $\e$ and 
	$|\rho_0 - \rho_1|$ sufficiently small, cutting off the potential term to the exterior region $\{ \rho_1 + |t| \leq r \leq \rho_0 + |t| \}$
	and using the compact support of $\vec u(0)$ along with finite speed of propagation.  Since $v_0( \rho_0) = v_1( \rho_0) = 0$ we can argue as in Corollary~\ref{diff esti} and Corollary~\ref{diff esti 2} to obtain
	\ant{
		&\abs{v_0( \rho_1)} = \abs{v_0(\rho_1) - v_0( \rho_0)} \le C_1\, \e\, (\abs{v_0( \rho_1)} + \abs{v_1( \rho_1)})\\
		&\abs{v_1( \rho_1)} = \abs{v_1(\rho_1) - v_1( \rho_0)} \le C_2\, \e\, (\abs{v_0( \rho_1)} + \abs{v_1( \rho_1)})
	}
	where here we have used that $\frac{1}{2} \rho_0< \rho_1 < \rho_0$ to obtain constants $C_1, C_2$ which depend only $\rho_0$ (which is fixed) and the uniform constant in~\eqref{v rho1}, but not on~$\e$. 
	Putting the above estimates together yields
	\EQ{
		(\abs{v_0( \rho_1)} + \abs{v_1( \rho_1)}) \le C_3 \e (\abs{v_0( \rho_1)} + \abs{v_1( \rho_1)})
	}
	which implies that $\abs{v_0( \rho_1)} = \abs{v_1( \rho_1)} = 0$ by taking $\e>0$ small enough. By~\eqref{v rho1} and the equalities in~\eqref{all small} we deduce that 
	\ant{
		\| \vec u(0)\|_{\dot{H}^1 \times L^2(r \ge \rho_1)} = 0, 
	}
	which contradicts \eqref{rho1 def}.  Thus, we must have $\vec u = (0,0)$.
\end{proof}
This completes the proof in the case that $\ell_0 = 0$. We will now show that the case $\ell_0 \neq 0$ is in fact impossible.  
\vspace{\baselineskip}

\begin{flushleft} \textbf{Case 2: $\ell_0 \neq 0$ is impossible:} \end{flushleft}

\begin{lem}\label{u1 imp}
	Let $\ell_0$ be as in Lemma \ref{spacial decay}.  Then necessarily $\ell_0 = 0$.  
\end{lem}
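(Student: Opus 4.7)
The proof is by contradiction. Suppose $\ell_0 \neq 0$. The strategy is to exploit the non-vanishing asymptotic $r^3 u_0(r) \to \ell_0$ from Lemma~\ref{spacial decay} in order to construct a competing stationary solution to~\eqref{e00} and contradict the uniqueness of $Q_n$ established in Theorem~\ref{t01}. Roughly, since $\psi_0 = Q_n + r u_0$ has the asymptotics $\psi_0(r) = n\pi + (\ell_0 - \alpha_n)r^{-2} + O(r^{-4})$ as $r \to \infty$, the initial data looks, at spatial infinity, like a different element in the one-parameter family of stationary profiles given by Lemma~\ref{l05b}. The plan is to show that $\vec u(0)$ must in fact coincide with this competing stationary solution on a large exterior region, which propagates inward by ODE uniqueness to yield a second solution of \eqref{e00}, the desired contradiction.

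The first step is to apply Lemma~\ref{l05b} with $\alpha = \alpha_n - \ell_0 \neq \alpha_n$ to obtain a solution $\varphi := \varphi_{\alpha_n - \ell_0}$ of the stationary ODE satisfying $\varphi(r) = n\pi - (\alpha_n - \ell_0)r^{-2} + O(r^{-6})$ as $r \to \infty$. By Theorem~\ref{t01}, this $\varphi$ cannot be a bona fide stationary Adkins--Nappi map (since $Q_n$ is the unique one), so $\varphi$ fails to extend smoothly to $r=0$ with $\varphi(0) = 0$. Pick $R_0 > 0$ large enough so that $\varphi$ is smooth on $[R_0,\infty)$, and define $\tilde u(r) := (\varphi(r) - Q_n(r))/r$, which satisfies $\tilde u(r) = \ell_0 r^{-3} + O(r^{-7})$ as $r \to \infty$. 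A direct substitution shows that $(\tilde u, 0)$ is a stationary solution of \eqref{e31} on $[R_0, \infty)$.

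The next step is to rerun the machinery of Lemma~\ref{project ineq}, Lemma~\ref{v project}, and Corollary~\ref{diff esti 2}, now using $(\tilde u, 0)$ as the reference solution in place of $(0,0)$. Concretely, setting $\vec w(0) := \vec u(0) - (\chi_R \tilde u, 0)$ for a cutoff $\chi_R$ supported in $\{r \geq R/2\}$ and equal to $1$ on $\{r \geq R\}$, we run the truncated Cauchy problem~\eqref{h eq} for $\vec w$ rather than $\vec u$. On the cone $\CC_R$ the cutoff is trivial and $(\tilde u, 0)$ solves the equation, so $w$ satisfies an equation of the same semi-linear form but with nonlinearity $Z(r, \tilde u + w) - Z(r, \tilde u)$, and initial data whose rates are governed by $\tilde v_0(r) := r^3(u_0(r) - \tilde u(r)) = O(r^{-4})$ and $\tilde v_1(r) := r\int_r^\infty u_1(\rho)\rho\,d\rho = O(r^{-2})$ from Lemma~\ref{spacial decay}. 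The channels-of-energy estimate in Proposition~\ref{linear prop}, combined with the vanishing exterior tail $\|\vec u(t)\|_{\dot H^1 \times L^2(r \geq R+|t|)} \to 0$ from Corollary~\ref{vanish tail} (which is inherited by $\vec w$ since $\tilde u \in \dot H^1(r \geq R)$), then produces an analog of the difference estimates of Corollary~\ref{diff esti 2} for $\tilde v_0, \tilde v_1$ with improved decay exponents reflecting the faster spatial decay of the modified data.

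Iterating these improved difference estimates as in Claim~\ref{comp supp}, we force $\tilde v_0(R) = \tilde v_1(R) = 0$ for all sufficiently large $R$, and hence $\vec u(0) \equiv (\tilde u, 0)$ on $[R,\infty)$. Translating back, $\psi_0(r) = \varphi(r)$ on $[R,\infty)$. Since $\psi_0$ is smooth on $[0,\infty)$ with $\psi_0(0) = 0$ and both $\psi_0$ and $\varphi$ solve the ODE \eqref{e00} on $(0,\infty)$, ODE uniqueness from the common value at $r = R$ propagates the equality inward to all $r > 0$, extending $\varphi$ to a solution of \eqref{e00} with $\varphi(0) = 0$ and $\varphi(\infty) = n\pi$. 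By Theorem~\ref{t01} this forces $\varphi = Q_n$, contradicting $\ell_0 \neq 0$. The main obstacle is Step 3: the nonlinear terms $Z(r, \tilde u + w) - Z(r, \tilde u)$ contain cross terms where the slowly decaying background $\tilde u = O(r^{-3})$ multiplies the perturbation $w$, and the full rate improvement only closes after systematically exploiting the decay of $\tilde u$ and handling the cutoff errors supported in the transition zone $\{R/2 \leq r \leq R\}$, where $\chi_R \tilde u$ is not exactly stationary.
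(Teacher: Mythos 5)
Your overall strategy matches the paper's: use $\ell_0 \neq 0$ to invoke Lemma~\ref{l05b} and produce a competing stationary profile $\tilde Q := \vp_{\al_n - \ell_0}$, show that $\psi$ must in fact coincide with it, and contradict the uniqueness of $Q_n$ from Theorem~\ref{t01}. However, there is a genuine gap at the end, and your intermediate machinery is needlessly complicated.

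The gap is in the step ``Since $\psi_0$ is smooth\ldots and both $\psi_0$ and $\varphi$ solve the ODE \eqref{e00} on $(0,\infty)$, ODE uniqueness\ldots propagates the equality inward.'' The function $\psi_0 = \psi(0,\cdot)$ is just initial data for the time-dependent flow; nothing established so far makes it a solution of the stationary ODE, so ODE uniqueness does not apply. The analogue of Claim~\ref{comp supp} only gives $\psi(0,r) = \tilde Q(r)$ and $\psi_t(0,r) = 0$ on some exterior $[R_*,\infty)$. To push this all the way to $r=0$ you must run the analogue of Lemma~\ref{ell=0 lem}: set $\rho_0$ equal to the inner boundary of the coincidence region, show by contradiction (choosing $\rho_1 < \rho_0$ close, using the small-data exterior theory and the channels-of-energy bound Proposition~\ref{linear prop} localized to $[\rho_1,\rho_0]$) that the ``$v_0, v_1$'' quantities must also vanish at $\rho_1$, and conclude $\rho_0 = 0$. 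This forces $\psi(0,\cdot) \equiv \tilde Q$, so $\tilde Q$ is a finite-energy degree-$n$ stationary solution (in particular $\tilde Q(0) = 0$), whence $\tilde Q = Q_n$ and $\ell_0 = 0$.

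Separately, the paper avoids your cutoff issues entirely. Rather than studying $w = u - \chi_R\tilde u$, it sets $\tilde u(t,r) := \bigl(\psi(t,r) - \tilde Q(r)\bigr)/r$ globally. Because $\tilde Q$ solves the same stationary equation as $Q$, this $\tilde u$ satisfies a semi-linear equation of \emph{exactly} the form \eqref{e31}, just with $\tilde Q$ in place of $Q$ in \eqref{e32}--\eqref{e36}; the potential and nonlinearity bounds \eqref{pot bound}, \eqref{Z bounds} then hold for large $r$, which is all the exterior rigidity machinery ever uses. Consequently Lemma~\ref{project ineq}, Lemma~\ref{spacial decay}, Claim~\ref{comp supp} and Lemma~\ref{ell=0 lem} can be repeated word for word for $\tilde u$, with no transition-zone errors and no cross terms between a slowly decaying background and the perturbation — precisely the obstacle you flag as the hard point in your approach.
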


\begin{proof}
	
	We will prove Lemma \ref{u1 imp} by showing that if $\ell_0 \neq0$ then the solution to the Adkins--Nappi wave map equation~\eqref{eq:an} given by $\psi(t, r)  = Q(r) + r u(t, r)$ is equal to a finite energy stationary solution $\tilde Q(r)$ of~\eqref{e00}. However, we know by the uniqueness part of Theorem~\ref{t01} that if $\tilde Q$ is a finite energy solution to \eqref{e00}, then $\tilde Q = Q$ which implies that $\vec u \equiv 0$, a contradiction to our initial assumption that $\ell_0 \neq 0$. 
	
	To prove that our compact solution $\psi$ as above is equal to a solution to the stationary equation \eqref{e00},  we first linearize about the solution given by Lemma \ref{l05a} with the same spatial decay as $\psi$.  We then use the previous arguments that showed that if $\ell_0 = 0$ then $\vec u = (0,0)$ to conclude.   The setup is as follows.  Assume that $\ell_0 \neq 0$.  By Theorem \ref{t01} there exists 
	a unique $\al_n > 0$ such that
	\begin{align*}
	Q(r) = n \pi - \al_n r^{-2} + O(r^{-6}) \quad \mbox{as } r \rar \infty. 
	\end{align*}
	By Lemma \ref{l05a} there exists a unique solution $\vp_{\al_n - \ell_0}$ to the 
	stationary equation 
	\begin{align}\label{Qt equ}
	\vphi_{rr} - \frac{2}{r} \vphi_r = \frac{\sin 2 \vphi}{r^2} + 
	\frac{(\vphi - \sin \vphi \cos \vphi)(1 - \cos 2 \vphi)}{r^4}, \quad r > 0,
	\end{align}
	such that 
	\begin{align}\label{Qell asym}
	\vp_{\al_n - \ell_0}(r) = n\pi - (\al_n - \ell_0)r^{-2} + O(r^{-6}) 
	\quad \mbox{as } r \rar \infty.
	\end{align}
	For brevity we use the notation $\tilde Q = \vp_{\al_n - \ell_0}$.  For each $t \in \R$, we define 
	\begin{align*}
	\tilde u(t,r) = \frac{1}{r} \bigl ( \psi(t,r) - \tilde Q(r) \bigr ), 
	\end{align*}
	where as before, $\vec \psi = \vec Q + r \vec u$ is our compact solution.  We now make some observations about $\tilde u$.  Since $\psi$ satisfies \eqref{eq:an} and 
	$\tilde Q$ satisfies \eqref{Qt equ} we see that 
	$\tilde u$ satisfies 
	\begin{align}
	\begin{split}\label{ul eq}
	\tilde u_{tt} - \tilde u_{rr} - \frac{4}{r} \tilde u + \tilde V(r) \tilde u
	+ \tilde Z(r,r\tilde u) = 0, 
	\end{split}
	\end{align}
	where the potential $\tilde V(r)$ and 
	$\tilde Z(r,\tilde u) = \tilde u^2 \tilde Z_2(r) + \tilde u^3 \tilde Z_3(r,r\tilde u) + \tilde u^4 \tilde Z_4(r,r\tilde u) + \tilde u^5 \tilde Z_5(r\tilde u)$
	are as in \eqref{e32}, \eqref{e33}, \eqref{e34}, \eqref{e35} and \eqref{e36} with $\tilde Q$ in place of $Q$.  In particular, 
	by \eqref{Qell asym} we have the analogs of the estimates 
	\eqref{pot bound} and \eqref{Z bounds}:
	\begin{align}\label{pot bound2}
	|\tilde V(r)| \lesssim r^{-6},
	\end{align}
	as well as 
	\begin{align}
	\begin{split}\label{Z bounds2}
	\bigl | \tilde u^2 \tilde Z_2(r) \bigr | &\lesssim r^{-3} |\tilde u|^2, \\
	\bigl | \tilde u^3 \tilde Z_3(r,r\tilde u) \bigr | &\lesssim |\tilde u|^3, \\
	\bigl | \tilde u^4 \tilde Z_4(r,r\tilde u) \bigr | &\lesssim r^{-1} |\tilde u|^4, \\
	\bigl | \tilde u^5 \tilde Z_5(r\tilde u) \bigr | &\lesssim |\tilde u|^5.
	\end{split}
	\end{align}
	Also, since $\tilde u(t,r) = \frac{1}{r} (\psi(t,r) - \tilde Q(r) ) = 
	u(t,r) + \frac{1}{r} ( Q(r) - \tilde Q(r) )$, we see that $\vec{\tilde u}$ inherits the vanishing property from $\vec u$:
	\begin{align}
	\begin{split}\label{compact ext en2}
	\forall R > 0, \quad \lim_{|t|\rar \infty} \| \vec{\tilde u}(t) \|_{\dot H^1 \times L^2(r \geq 
		R + |t|)} &= 0, \\
	\lim_{R\rar \infty} \sup_{t \in \R} \| \vec{\tilde u}(t) \|_{\dot H^1 \times L^2(r \geq 
		R + |t|)} &= 0.
	\end{split}
	\end{align}
	Finally, by construction we see that
	\begin{align}
	\begin{split}\label{ul limits}
	\tilde v_0(r) &:= r^3 \tilde u(0,r) = O(r^{-4}) \quad \mbox{as } r \rar \infty, \\
	\tilde v_1(r) &:= r \int_r^\infty \tilde u_t(0,\rho) \rho d\rho = O(r^{-2})
	\quad \mbox{as } r \rar \infty.
	\end{split}
	\end{align} 
	
	Using \eqref{ul eq}, \eqref{pot bound2}, \eqref{Z bounds2}, \eqref{compact ext en2} and \eqref{ul limits} we can repeat the previous arguments of this section word for word with $\tilde u$ in place of $u$ to conclude that 
	\begin{align}
	\vec{\tilde u} \equiv (0,0). 
	\end{align}
	Thus, $\psi(t,r) = \tilde Q(r)$, i.e. $\vec \psi$ is a finite energy stationary solution to \eqref{e00}.  By Theorem \ref{t01}, we must have $\vec Q = \vec \psi = 
	\vec Q + r \vec u$ whence $\vec u = (0,0)$, a contradiction to the fact that $\ell_0 \neq 0$.  Thus, $\ell_0$ must be necessarily 0.  
\end{proof}

\subsection{Proof of Proposition~\ref{rigid} and Proof of Theorem~\ref{t:main}}
For clarity, we summarize what we have done in the proof of Proposition~\ref{rigid}. 
\begin{proof}[Proof of Proposition~\ref{rigid}]
	Let $\vec u(t)$ be a solution to~\eqref{e31} and suppose that the trajectory 
	\ant{
		K=\{ \vec u(t) \mid t \in \R\}
	}
	is pre-compact in $ \HH = \dot H^1 \times L^2(\R^5)$.
	By Lemma~\ref{spacial decay} there exists $\ell_0 \in \R$ so that 
	\begin{align} 
	&\abs{r^3 u_0(r) - \ell_0 } = O(r^{-4}) \mas r \to \infty,\\
	&\abs{r \int_r^{\infty} u_1( \rho) \rho \, d\rho} = O(r^{-2}) \mas r \to \infty. 
	\end{align}
	By Lemma \ref{u1 imp}, $\ell_0 = 0$.  Then by Lemma~\ref{ell=0 lem} we can conclude that $\vec u(0)= (0, 0)$, which proves Proposition~\ref{rigid}. 
\end{proof}

The proof of Theorem~\ref{t31} is now complete. We conclude by summarizing the argument. 
\begin{proof}[Proof of Theorem~\ref{t31} (equivalently Theorem~\ref{t:main}(b))]
	Suppose that Theorem \ref{t31} \newline fails. Then by Proposition~\ref{p:cc} there exists a critical element, that is, a nonzero solution $\vec u_{\infty}(t) \in \HH$ to~\eqref{e31} such that the trajectory $K= \{ \vec u_{\infty}(t) \mid t \in \R\}$ is pre-compact in $\HH$. However, Proposition~\ref{rigid} implies that any such solution is necessarily identically equal to $(0, 0)$, which contradicts the fact that the critical element $\vec u_{\infty}(t)$ is, by construction, nonzero. 
\end{proof}

%\section{Rigidity}\label{s:r} 
%
%\input{sec_6}

%%%%%%%%%%%%%%%%%%%%%%%%%%%%%%%%%%%%%%%%%%%%%%%%%%%%%%%%%%%%%%%%%%%%%%%%%%%%%%5
%%%%%%%%%%%%%%%%%%%%%%%%%%%%%%%%%%%%%%%%%%%%%%%%%%%%%%%%%%%%%%%%%%%%%%%%%%%%%%%
%%%%%%%%%%%%%%%%%%%%%%%%%%%%%%%%%%%%%%%%%%%%%%%%%%%%%%%%%%%%%%%%%%%%%%%%%%%%%%%%

\appendix
\section{Strichartz estimates and local energy decay} \label{a:S}

Here we give a brief outline of the proof of Theorem~\ref{t:strich}(b).  The argument is now standard and is based on ideas from~\cite{RodS}, and follows the general philosophy that spectral properties of $H_V$ are deeply related to local energy decay estimates, which then imply Strichartz estimates (as long as we stay away from the $L^2_t$ endpoints).

\begin{proof}[Sketch of the proof of Theorem~\ref{t:strich}\emph{(b)}] 
	First note that it suffices to consider $F=0$ by Minkowski's inequality. 
	Note that   $A:=(-\Delta)^{\frac12}$ satisfies
	\EQ{\label{ta}
		\| A\, f\|_{2} \simeq \|f\|_{\dot H^{1}}  
	}
	for all compactly supported $f\in C^{\infty}(\R^{5})$. %Indeed, squaring both sides this is equivalent to 
	%\[
	%\LR{-\Delta f|f} = \| \nabla f\|_{2}^{2}
	%\]
	%for all such $f$, which is obviously true. 
	For any real-valued $w=(w_{1},w_{2})\in \dot H^{1}\times L^{2}(\R^5)$ we set
	\ant{
		W:= A w_{1} + i w_{2}
	}
	Then \eqref{ta} implies that $\|W\|_{2}\simeq \| (w_{1},w_{2})\|_{\dot{H}^1 \times L^2}$. 
	Furthermore, $w$ solves~\eqref{eq:lw} with $F=0$ if and only if 
	\EQ{
		i\p_{t} W &= A W + V w \\
		W(0) &= A f+ig \in L^{2}(\R^{5})
	}
	Thus 
	\ant{
		W(t) = e^{-it A} W(0) -i \int_{0}^{t} e^{-i(t-s)A} Vw(s)\, ds
	}
	Now, let $X$ be any admissible Strichartz norm (as in the left-hand-side of Theorem~\ref{t:strich}(b)). By Strichartz estimates for the free scalar wave equation in $\R^5$, i.e., for $\Box w = 0$ (see for example~\cite{LinS}),   we have 
	\ant{
		\| A^{-1}\Re e^{-it A} W(0)\|_{X}\lesssim \|W(0)\|_{2}
	}
	Next we factor $V(r)=V_{1}(r)V_{2}(r)$ where each of the factors decays like $r^{-3}$. By the Christ-Kiselev lemma, see~\cite{Sogge, CK01}, and our exclusion of the $L^{2}_{t}$ endpoint, it suffices to control
	\EQ{\label{red}
		\left\| A^{-1}\Re \int_{-\infty}^{\infty} e^{-i(t-s) A}\, V_{1}V_{2} \, w(s)\, ds  \right\|_{X} \le \| K\|_{L^{2}_{t,x}\to X} \|V_{2}\, w(s)\|_{L^{2}_{s,x}} 
	}
	where 
	\EQ{ \label{Kdef} 
		(KF)(t) &:= A^{-1}\Re \int_{-\infty}^{\infty} e^{-i(t-s) A} V_{1}F(s)\, ds.
	}
	Next note that 
	\ant{
		\| KF\|_{X}\le \| A^{-1}\Re e^{-itA} \|_{L^2 \to X} \Big\| \int_{-\infty}^{\infty} e^{is A} V_{1}F(s)\, ds\Big\|_{L^2}.
	}
	The first factor on the right-hand side is some constant by the free Strichartz estimates; see~\cite{LinS}. We claim that the second one is $ \lesssim \|F\|_{L^{2}_{t,x}}$. By duality, this claim is equivalent to
	the {\em localized energy bound}
	\ant{
		\| V_{1}\, e^{-itA} \phi\|_{L^{2}_{t,x}}\le C\|\phi\|_{2}.
	} 
	This is elementary to prove for radial $\phi$  using the
	Fourier transform relative  to $\LL_0 = -\p_{rr}+\frac{2}{r^{2}}$ on $L^{2}((0,\infty))$ after conjugation by $r^2$; see~\cite{LS, LOS1} for examples of how to carry out this standard argument. 
	
	For the second factor in~\eqref{red} we claim the bound
	\EQ{\label{len2}
		\|V_{2}\,  w(s)\|_{L^{2}_{s,x}} \le C \|W(0)\|_{2} = C\|(f,g)\|_{\dot H^{1}\times L^{2}}
	}
	holds for any solution of~\eqref{eq:lw} with $F=0$. 
	One way to prove this is to make use of  distorted Fourier transform relative to the self-adjoint operator $H_V$ 
	on its domain $\calD$,  restricted to radial functions. It is here that the spectral properties of $H_V$ proved in Theorem \ref{t:Hspec} are essential. See for example~\cite[Section 5 and Lemma $5.2$]{LS} or~\cite[Lemma $4.3$]{LOS1} for  detailed arguments in more complicated situations than what's needed here. This completes the sketched proof of Theorem~\ref{t:strich}(b).
\end{proof}

\bibliographystyle{plain}
\bibliography{researchbib}

 \bigskip

\centerline{\scshape Andrew Lawrie}
\smallskip
{\footnotesize
% please put the address of the first author
 \centerline{Department of Mathematics, MIT}
\centerline{77 Massachusetts Ave., 2-267, Cambridge, MA 02139, U.S.A.}
\centerline{\email{ alawrie@mit.edu}}
} % Do not forget to end the {\footnotesize by the sign }

\medskip 

\centerline{\scshape Casey Rodriguez}
\smallskip
{\footnotesize
% please put the address of the first author
 \centerline{Department of Mathematics, University of Chicago}
\centerline{5734 South University Avenue, Chicago, IL 60637, U.S.A.}
\centerline{\email{ c-rod216@math.uchicago.edu}}
}

\end{document}